\documentclass[12pt,reqno]{amsart}

\usepackage{amsthm, amsmath, amsfonts, amssymb, graphicx, tikz-cd, makecell, hyperref, mathtools}
\DeclareMathAlphabet{\mathbbb}{U}{bbold}{m}{n}
\usepackage[capitalize,noabbrev]{cleveref} 
\usetikzlibrary{arrows.meta,calc}
\usepackage[shortlabels]{enumitem}
\usepackage{needspace}

\newtheorem{theorem}{Theorem}[section]
\newtheorem{lemma}[theorem]{Lemma}
\newtheorem{proposition}[theorem]{Proposition}
\newtheorem{corollary}[theorem]{Corollary}

\theoremstyle{definition}
\newtheorem{definition}[theorem]{Definition}
\newtheorem{notation}[theorem]{Notation}
\newtheorem{example}[theorem]{Example}

\newtheorem{remark}[theorem]{Remark}

\setlength{\topmargin}{-0.5in}
\setlength{\textheight}{9in}
\setlength{\oddsidemargin}{0in}
\setlength{\evensidemargin}{0in}
\setlength{\textwidth}{6.5in}

\newcounter{mycount}

\newcommand{\myref}[1]{\hyperref[#1]{#1}}


\newcommand{\up}{{\uparrow}}

\newcommand{\sfour}{\mathsf{S4}}
\newcommand{\ipc}{\mathsf{IPC}}

\newcommand{\M}{\mathsf{M}}

\newcommand{\K}{\mathbb{K}}
\newcommand{\V}{\mathsf{Var}}

\renewcommand{\H}{\mathsf{H}}
\renewcommand{\S}{\mathsf{S}}
\renewcommand{\P}{\mathsf{P}}
\renewcommand{\L}{\mathsf{L}}
\newcommand{\F}{\mathfrak{F}}
\newcommand{\G}{\mathfrak{G}}

\newcommand{\mipc}{\mathsf{MIPC}}
\newcommand{\mgrz}{\mathsf{MGrz}}

\newcommand{\msfour}{\mathsf{MS4}}

\newcommand{\sfive}{\mathsf{S5}}
\newcommand{\mha}{\mathbb{MHA}}
\newcommand{\ms}{\mathsf{MS4}}
\newcommand{\sa}{\mathbb{S}\mathbbb{4}}
\newcommand{\ha}{\mathbb{HA}}
\newcommand{\qsfour}{\mathsf{QS4}}
\newcommand{\qgrz}{\mathsf{QGrz}}
\newcommand{\msa}{\mathbb{MS}\mathbbb{4}}
\newcommand{\mgrza}{\mathbb{MG}\mathbbb{rz}}
\newcommand{\cpc}{\mathsf{CPC}}
\newcommand{\Op}{\mathcal{O}}

\renewcommand{\L}{\mathsf{L}}
\newcommand{\Lae}{\mathcal{L_{\forall\exists}}}

\newcommand{\Log}{\mathsf{Log}}
\newcommand{\Alg}{\mathsf{Alg}}
\newcommand{\Clop}{\mathsf{Clop}}

\newcommand{\iqc}{\mathsf{IQC}}
\newcommand{\sk}{\rho}
\newcommand{\B}{\mathfrak{B}}
\newcommand{\A}{\mathfrak{A}}
\newcommand{\msfrm}{\mathbf{DF}_{\ms}}
\newcommand{\mipcfrm}{\mathbf{DF}_{\mipc}}
\newcommand{\Grz}{\mathsf{Grz}}
\newcommand{\grz}{\mathsf{grz}}

\newcommand{\bbox}{\blacksquare}

\newcommand\clusterone[2]{
  \path[draw,red] let \p1=(#1)
    in \pgfextra{
    \pgfmathsetmacro{\radius}{#2*0.3}
  }
  (\p1) circle(\radius cm);
}

\newcommand\clustertwo[4]{
  \path[draw,red] let \p1=(#1), \p2=(#2), \p3=($(\p1)!.5!(\p2)$)
  in \pgfextra{
    \pgfmathsetmacro{\angle}{atan2(\y2-\y1,\x2-\x1)}
    \pgfmathsetmacro{\focal}{veclen(\x2-\x1,\y2-\y1)/2/1cm}
    \pgfmathsetmacro{\lentotcm}{\focal*2*#3}
    \pgfmathsetmacro{\axeone}{(\lentotcm - 2 * \focal)/2+\focal}
    \pgfmathsetmacro{\axetwo}{sqrt((\lentotcm/2)*(\lentotcm/2)-\focal*\focal}
    \pgfmathsetmacro{\newaxetwo}{\axetwo*0.5*#4}
  }
  (\p3) ellipse[x radius=\axeone cm,y radius=\newaxetwo cm, rotate=\angle];
}

\setlist[enumerate,1]{label={\upshape(\arabic*)},ref=\arabic*}

\makeatletter 
\edef\plabelformat{(\string#2\string#1\string#3)}
\edef\plabelrangeformat{(\string#3\string#1,\string#2\string#6)}
\newcommand{\plabel}[1]{\label{#1}
\immediate\write\@auxout{\noexpand\crefformat{#1}{\noexpand\cref{#1}\plabelformat}
\noexpand\crefmultiformat{#1}{\noexpand\cref{#1}\plabelformat}{,\plabelformat}{,\plabelformat}{,\plabelformat}
\noexpand\crefrangeformat{#1}{\noexpand\cref{#1}\plabelrangeformat}}}
\makeatother

\makeatletter
\@namedef{subjclassname@2020}{\textup{2020} Mathematics Subject Classification}
\makeatother

\pgfdeclarelayer{edgelayer}
\pgfdeclarelayer{nodelayer}
\pgfsetlayers{edgelayer,nodelayer,main}

\tikzstyle{none}=[inner sep=0pt]
\tikzstyle{black dot}=[fill=black, draw=black, shape=circle, inner sep=0, minimum size=3.5pt]

\tikzstyle{to}=[->]
\tikzstyle{mapsto}=[{|->}]
\tikzstyle{none dashed}=[-, dashed]
\tikzstyle{dashed to}=[->, dashed]
\tikzstyle{dashed mapsto}=[{|->}, dashed]
\tikzstyle{Latex arrow}=[{-{Latex[width=1mm]}}]
\tikzstyle{dashed Latex arrow}=[{-{Latex[width=1mm]}}, dashed]

\begin{document}

\title[Failure of the Blok--Esakia Theorem in the monadic setting]{Failure of the Blok--Esakia Theorem\\ in the monadic setting}

\author{G.~Bezhanishvili}
\address{New Mexico State University}
\email{guram@nmsu.edu}

\author{L.~Carai}
\address{University of Milan}
\email{luca.carai.uni@gmail.com}

\subjclass[2020]{03B45; 03B55; 06D20; 06E25; 06E15}
\keywords{Intuitionistic logic; modal logic; G\"odel translation; Blok--Esakia Theorem; Heyting algebra; Boolean algebra with operators}

\begin{abstract}
The Blok--Esakia Theorem establishes that the lattice of superintuitionistic logics is isomorphic to the lattice of extensions of Grzegorczyk's logic.
We prove that the Blok--Esakia isomorphism $\sigma$ does not extend 
to the fragments of the corresponding predicate logics of already one fixed variable. In other words, we prove that $\sigma$ is no longer an isomorphism from the lattice of extensions of the monadic intuitionistic logic to the lattice of extensions of the monadic Grzegorczyk logic.
\end{abstract}

\maketitle
\tableofcontents

\section{Introduction}

It is a classic result of McKinsey and Tarski \cite{MT48} that the G\"odel translation embeds the intuitionistic propositional calculus $\ipc$ into Lewis' modal system $\sfour$. 
A systematic study of the relationships between extensions of $\ipc$ (superintuitionistic logics) and extensions of $\sfour$ was initiated by Dummett and Lemmon \cite{DL59} and further studied 
by Maksimova and Rybakov \cite{MR74eng}, Blok and Dwinger \cite{BD75}, Blok \cite{Blo76}, and Esakia \cite{Esa76,Esa79,Esa79b}.

Let $\L$ be a superintuitionistic logic. Applying the G\"odel translation $(-)^t$ to the theorems of $\L$ embeds $\L$ into the following normal extension of $\sfour$:
\[
\tau\L = \sfour + \{ \varphi^t : \L \vdash \varphi \}.
\] 
For a normal extension $\M$ of $\sfour$, let 
\[
\rho\M = \{ \varphi : \M \vdash \varphi^t \}.
\] 
Then $\rho\M$ is a superintuitionistic logic, and we call $\M$ a {\em modal companion} of a superintuitionistic logic $\L$ provided $\L = \rho\M$, in which case $\L$ is called the {\em intuitionistic fragment} of $\M$ (see, e.g., \cite[Sec.~9.6]{CZ97}). 

Esakia \cite{Esa79} proved that all modal companions of a superintuitionistic logic $\L$ form the interval $[\tau\L,\sigma\L]$ in the lattice of normal extension of $\sfour$, where $\sigma\L$ is obtained from $\tau\L$ by postulating the {\em Grzegorczyk axiom}
\[
\grz = \Box(\Box(p\to\Box p)\to p)\to p.
\]
In other words, if $\Grz \coloneqq \sfour + \grz$ is the \emph{Grzegorczyk logic}, then
\[
\sigma\L=\Grz + \{ \varphi^t : \L\vdash\varphi \}.
\]

Let $\Lambda(\ipc)$ be the 
lattice of superintuitionistic logics and $\Lambda(\Grz)$ the 
lattice of normal extensions of $\Grz$. 
By the celebrated Blok--Esakia Theorem, $\sigma\colon\Lambda(\ipc)\to\Lambda(\Grz)$ is an isomorphism (see, e.g., \cite[Thm.~9.66]{CZ97}). 

The G\"odel translation has a natural extension to the predicate setting, and Rasiowa and Sikorski (see, e.g., \cite[XI.11.5]{RS63}) proved that it embeds the intuitionistic predicate calculus $\iqc$ into $\qsfour$ (the predicate $\sfour$). However, the behavior of modal predicate companions of superintuitonistic predicate logics 
is much less understood.
For example, it remains open whether the predicate Grzegorczyk logic $\qgrz$ is a modal companion of $\iqc$, let alone the largest modal companion.\footnote{In \cite{Pan89} it is claimed that $\qgrz$ is a modal companion of $\iqc$, and in \cite{Nau91} that it is not the largest modal companion. However, the proofs in \cite{Pan89,Nau91} rely on the Flagg--Friedman translation \cite{FF86} that Inou\'e \cite{Ino92} showed is not faithful.
Therefore, these results require further examination (see \cite[Rem.~2.11.13]{GSS09} and \cite[Rem.~5.16]{BC24b}).}

Hilbert and Ackermann \cite{HA28} initiated the study of the monadic fragment of classical predicate logic, where only 
one variable is allowed in monadic predicates.\footnote{This should not be confused with the monadic fragment, where different variables are allowed in monadic predicates.}
Wajsberg \cite{Waj33} proved that this fragment is axiomatized by $\mathsf{S5}$, and Halmos~\cite{Hal62} conducted an algebraic study of this fragment.
Prior~\cite{Pri57} introduced the monadic intuitionistic calculus $\mipc$,
which Bull \cite{Bul66} proved to axiomatize the monadic fragment of $\iqc$. Fischer Servi \cite{FS77} defined $\msfour$ (monadic $\sfour)$ and proved that the G\"odel translation embeds $\mipc$ into $\msfour$.

Monadic logics are better understood than predicate logics. They can be thought of as bimodal logics \cite{FS77,Esa88,BBI23}, and hence can be studied using the standard semantic tools in modal logic (see, e.g., \cite{GKWZ03}). Because of this, normal extensions of $\mipc$ and $\ms$ have been studied more extensively than their predicate counterparts.
In particular, the monadic Grzegorczyk logic $\mgrz$ was introduced in \cite{Esa88}, where it was shown that $\mgrz$ is a 
modal companion of $\mipc$. It is natural to ask whether the Blok--Esakia Theorem extends to the monadic setting. Our main contribution proves 
that it does {\bf not}. 
Our main tool is the algebraic semantics for $\mipc$ and $\ms$ provided by the varieties $\mha$ of monadic Heyting algebras and $\msa$ of monadic $\sfour$-algebras,
which generalize Halmos' monadic boolean algebras \cite{Hal56}.
We also heavily use the representation theory for $\mha$ and $\msa$, and the corresponding descriptive frames.

The paper is organized as follows.
In \cref{sec:mipc} we recall $\mipc$ and its algebraic and descriptive frame semantics, and in \cref{sec:ms4} we do the same for $\ms$. In \cref{sec: Godel translation} we generalize $\rho$, $\tau$, and $\sigma$ to the monadic setting, and in \cref{sec:Op} we generalize the functor $\Op \colon \sa \to \ha$ associating to each $\sfour$-algebra the Heyting algebra of its open elements to the functor $\Op \colon \msa \to \mha$.
We prove that $\Op$ is the algebraic counterpart of $\rho$ and give the dual description of $\Op$ using the corresponding descriptive frames. 
While $\Op \colon \sa \to \ha$ preserves the class operators $\H$, $\S$, and $\P$ of taking homomorphic images, subalgebras, and products, in \cref{sec: failure BE} we show that $\Op \colon \msa \to \mha$ no longer preserves $\S$.
It is this key observation that allows us to prove that, although $\tau$ and $\sigma$ remain lattice homomorphisms in the monadic setting, $\rho$ is neither a lattice homomorphism nor one-to-one. From this we derive that $\sigma$ is not an isomorphism, thus concluding that the Blok--Esakia Theorem does not extend to the monadic setting.

\section{$\mipc$} \label{sec:mipc}

Let $\mathcal L$ be the propositional language of $\ipc$, and let $\Lae$ be its extension by two ``quantifier modalities" 
$\forall$ and $\exists$. 

\begin{definition} \label{def: MIPC}
The \textit{monadic intuitionistic propositional calculus} $\mipc$ is the smallest set of formulas in the language
$\Lae$ containing
\begin{enumerate}
\item all theorems of 
$\ipc$;
\item the $\sfour$-axioms for $\forall$:
\quad $\forall(p\land q)\leftrightarrow(\forall p\land\forall q)$, \quad $\forall p \rightarrow p$, \quad $\forall p \rightarrow \forall \forall p$;
\item the $\sfive$-axioms for $\exists$:
\quad $\exists(p\vee q)\leftrightarrow(\exists p\vee\exists q)$, \quad $p \rightarrow \exists p$, \quad $\exists \exists p \rightarrow \exists p$,\\ 
\hphantom{the $\sfive$-axioms for $\exists$:} \quad $(\exists p \land \exists q) \rightarrow \exists (\exists p \land q)$;
\item the axioms connecting $\forall$ and $\exists$:
\quad $\exists\forall p\leftrightarrow\forall p$, \quad $\exists p \leftrightarrow \forall\exists p$;
\end{enumerate}
and closed under the rules of modus ponens, substitution, and necessitation $(\varphi / \forall \varphi )$.
\end{definition}

The algebraic semantics for $\mipc$ is provided by monadic Heyting algebras, which were first introduced by Monteiro and Varsavsky \cite{MV57}.

\begin{definition} \label{def:mha}
A \textit{monadic Heyting algebra} is a tuple $\mathfrak A=(H,\forall,\exists)$ such that $H$ is a Heyting algebra and $\forall, \exists$ are unary functions on $H$ satisfying the axioms corresponding to the ones in \cref{def: MIPC}.
\end{definition}

\begin{remark} \label{rem: H0}
For a monadic Heyting algebra $(H,\forall,\exists)$, 
let $H_0=\{ \forall a : a \in H \}$. Using the axioms of monadic Heyting algebras, it is straightforward to check that 
\[
H_0= \{ a \in H : a = \forall a \} = \{ a \in H : a = \exists a \} = \{ \exists a : a \in H \},
\]
and that $H_0$ is a Heyting subalgebra of $H$.
Moreover, 
 $\forall$ is the right adjoint and $\exists$ is the left adjoint of the embedding $H_0\hookrightarrow H$. Furthermore, each monadic Heyting algebra 
is represented as a pair $(H,H_0)$, where the Heyting embedding $H_0\hookrightarrow H$ has both right and left adjoints (see, e.g., \cite[Sec.~3]{Bez98}).
\end{remark}

Clearly the class of monadic Heyting algebras is equationally definable and hence forms a variety. 
We denote the corresponding category 
by $\mha$.

Since terms in the language of monadic Heyting algebras correspond to formulas in $\Lae$, we say that a formula $\varphi$ is \emph{valid} in a monadic Heyting algebra $\mathfrak{A}$ (in symbols $\mathfrak{A} \vDash \varphi$) if the equation $t=1$ holds in $\mathfrak{A}$, where $t$ is the term corresponding to $\varphi$. For a class $\mathbb{K} \subseteq \mha$ of monadic Heyting algebras, we write $\mathbb{K} \vDash \varphi$ if $\mathfrak A \vDash \varphi$ for each $\mathfrak A \in \mathbb{K}$. The standard Lindenbaum-Tarski construction then yields:

\begin{theorem} \cite[Thm.~2]{FS77} \label{thm: alg comp MIPC}
$\mipc \vdash \varphi$ iff $\mha \vDash \varphi$ for each formula $\varphi$ of $\Lae$.
\end{theorem}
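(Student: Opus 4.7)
The plan is the standard Lindenbaum--Tarski construction, split into soundness and completeness. For soundness, I proceed by induction on the length of a derivation in $\mipc$. The axioms in clauses (1)--(4) of \cref{def: MIPC} translate, term by term, into exactly the equations defining $\mha$ in \cref{def:mha}, so they are valid in every monadic Heyting algebra. The rules modus ponens and substitution preserve validity by the usual argument, and necessitation preserves validity because $\forall a \leq a$ together with $\forall 1 = 1$ (the latter following from $\forall(p \land q) \leftrightarrow (\forall p \land \forall q)$ applied with $p = q = 1$) give that $t = 1$ implies $\forall t = 1$ in any $\mathfrak A \in \mha$.

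For completeness, I build the Lindenbaum--Tarski algebra. Define $\varphi \sim \psi$ iff $\mipc \vdash \varphi \leftrightarrow \psi$. The first task is to check that $\sim$ is a congruence with respect to all the connectives and the two quantifier modalities. For the propositional connectives this is classical. For $\forall$, one derives monotonicity from the conjunction axiom in (2): given $\mipc \vdash \varphi \to \psi$, we have $\mipc \vdash \varphi \leftrightarrow \varphi \land \psi$, hence by substitution and the axiom $\forall(\varphi \land \psi) \leftrightarrow (\forall \varphi \land \forall \psi)$ we obtain $\mipc \vdash \forall \varphi \to \forall \psi$. A symmetric argument using the disjunction axiom in (3) handles $\exists$. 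Once $\sim$ is known to be a congruence, the quotient $\mathfrak L \coloneqq \Lae / {\sim}$ carries well-defined operations $\land, \lor, \to, 0, 1, \forall, \exists$, and the equations defining $\mha$ hold in $\mathfrak L$ because the corresponding biconditional axioms lie in $\mipc$. Thus $\mathfrak L \in \mha$.

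It remains to observe the usual characterization of truth in $\mathfrak L$: if $v$ is the valuation sending each propositional variable $p$ to its equivalence class $[p]$, then for every formula $\chi$ one shows by induction on complexity that the value of $\chi$ under $v$ is exactly $[\chi]$, and in particular $\mathfrak L \vDash \chi$ iff $[\chi] = [1]$ iff $\mipc \vdash \chi$. Therefore, if $\mipc \nvdash \varphi$ then $\mathfrak L \not\vDash \varphi$, so $\mha \not\vDash \varphi$. Combined with soundness, this yields the equivalence.

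The only non-routine step is verifying that $\sim$ is a congruence for $\forall$ and $\exists$; I expect this to be the main, though still mild, obstacle, since it relies specifically on the conjunction axiom for $\forall$ and the disjunction axiom for $\exists$ rather than on necessitation alone (which would only yield $\forall(\varphi \leftrightarrow \psi)$, not the desired $\forall \varphi \leftrightarrow \forall \psi$). Everything else is a direct translation between the syntactic axioms of \cref{def: MIPC} and the algebraic axioms of \cref{def:mha}.
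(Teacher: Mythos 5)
Your overall strategy --- soundness by induction on derivations plus the Lindenbaum--Tarski quotient for completeness --- is exactly what the paper intends: it gives no proof of its own beyond the remark that ``the standard Lindenbaum--Tarski construction then yields'' the theorem, with a citation to \cite{FS77}. Before the main issue, one small slip in the soundness half: instantiating $\forall(p\land q)\leftrightarrow(\forall p\land\forall q)$ at $p=q=1$ yields only the tautology $\forall 1\leftrightarrow\forall 1$, not $\forall 1=1$. The fact is still true in every monadic Heyting algebra, but the correct route goes through the $\exists$-axioms: $1\le\exists 1$ gives $\exists 1=1$, and then $\forall\exists a=\exists a$ gives $\forall 1=\forall\exists 1=\exists 1=1$.

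The genuine gap is in the step you yourself identify as the crux: your derivation of monotonicity of $\forall$ is circular. From $\mipc\vdash\varphi\leftrightarrow(\varphi\land\psi)$ and the axiom instance $\forall(\varphi\land\psi)\leftrightarrow(\forall\varphi\land\forall\psi)$, the only way to reach $\forall\varphi\to\forall\psi$ is first to pass from $\forall\varphi$ to $\forall(\varphi\land\psi)$, i.e., to replace provably equivalent formulas underneath $\forall$ --- and that replacement property is exactly the congruence statement being established. (The same argument is perfectly valid \emph{inside} a monadic Heyting algebra, where $\forall$ is a function, so $a=a\land b$ immediately forces $\forall a=\forall(a\land b)=\forall a\land\forall b\le\forall b$; the trouble is that on the syntactic side ``$=$'' is provable equivalence and the syntax operator $\forall$ does not automatically respect it.) The same circularity affects your treatment of $\exists$ via the disjunction axiom. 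What is actually needed is a direct syntactic derivation, from the axioms of \cref{def: MIPC} together with necessitation, of the regularity rules ``from $\vdash\varphi\to\psi$ infer $\vdash\forall\varphi\to\forall\psi$ and $\vdash\exists\varphi\to\exists\psi$'' --- equivalently, of the $\mathsf{K}$-schema $\forall(p\to q)\to(\forall p\to\forall q)$, which is how the axiomatizations in the source literature are typically presented and which makes congruence immediate. Until that is supplied, the operations on the Lindenbaum--Tarski quotient are not known to be well defined, and the completeness half of the equivalence is not established.
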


\begin{definition}
An \emph{extension $\L$ of $\mipc$} is a set of formulas in the language $\Lae$ containing $\mipc$ and closed under modus ponens, substitution, and necessitation. 
\end{definition}

Each extension $\sf L$ of $\mipc$ 
gives rise to the variety $\Alg(\L)$ of monadic Heyting algebras validating all formulas in $\L$. 
Conversely, each variety $\mathbb{V}$ of monadic Heyting algebras gives rise to the extension $\Log(\mathbb{V})$ of $\mipc$ consisting of the formulas valid in all members of $\mathbb{V}$. By $\Lambda(\mipc)$ we denote the complete lattice of extensions of $\mipc$ and by $\Lambda(\mha)$ the complete lattice of subvarieties of $\mha$. We thus obtain:

\begin{theorem} \cite[Thm.~3]{Bez98} \label{thm: lattice iso}
$\Lambda(\mipc)$ is dually isomorphic to $\Lambda(\mha)$.
\end{theorem}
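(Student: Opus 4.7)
The plan is to construct the two obvious maps $\Alg \colon \Lambda(\mipc) \to \Lambda(\mha)$ and $\Log \colon \Lambda(\mha) \to \Lambda(\mipc)$ already introduced before the theorem, and show they are mutually inverse and order-reversing. Both are clearly order-reversing: if $\L_1 \subseteq \L_2$ then a monadic Heyting algebra validating every formula in $\L_2$ also validates every formula in $\L_1$, so $\Alg(\L_2) \subseteq \Alg(\L_1)$, and symmetrically for $\Log$. Well-definedness requires two checks: $\Alg(\L)$ is a variety because it is cut out by the equations $t_\varphi = 1$ for $\varphi \in \L$ (and the axioms of $\mha$); and $\Log(\mathbb V)$ is an extension of $\mipc$ because validity in every $\mathfrak A \in \mathbb V$ is preserved under modus ponens, substitution, and necessitation, and because $\mipc \subseteq \Log(\mathbb V)$ by \cref{thm: alg comp MIPC}.

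The core of the proof is to show $\Log \circ \Alg = \id_{\Lambda(\mipc)}$ and $\Alg \circ \Log = \id_{\Lambda(\mha)}$. The inclusion $\L \subseteq \Log(\Alg(\L))$ is immediate from the definitions. For the reverse inclusion, I would use the Lindenbaum--Tarski construction adapted to $\L$: on the set of formulas of $\Lae$ define $\varphi \sim_\L \psi$ iff $\L \vdash \varphi \leftrightarrow \psi$. A routine check using the $\mipc$-axioms for $\forall, \exists$ together with closure of $\L$ under modus ponens and necessitation shows that $\sim_\L$ is a congruence with respect to all the connectives of $\Lae$, and that the quotient $\mathfrak A_\L$ is a monadic Heyting algebra in which $[\varphi] = 1$ iff $\L \vdash \varphi$. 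In particular, $\mathfrak A_\L \in \Alg(\L)$, and if $\L \not\vdash \varphi$ then $\mathfrak A_\L \not\vDash \varphi$, giving $\Log(\Alg(\L)) \subseteq \L$.

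For $\Alg(\Log(\mathbb V)) = \mathbb V$, the inclusion $\mathbb V \subseteq \Alg(\Log(\mathbb V))$ is again by definition. For the converse, since $\mathbb V$ is a subvariety of $\mha$ it is defined by some set $E$ of equations in the language of monadic Heyting algebras. Using the term-formula correspondence, an equation $s = t$ in $E$ can be coded by the formula $\varphi_{s,t}$ whose term is $s \leftrightarrow t$ (so that $\mathfrak A \vDash s = t$ iff $\mathfrak A \vDash \varphi_{s,t}$). Every $\varphi_{s,t}$ thus belongs to $\Log(\mathbb V)$, and every $\mathfrak A \in \Alg(\Log(\mathbb V))$ validates each $\varphi_{s,t}$, hence satisfies each defining equation in $E$ and lies in $\mathbb V$. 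Combining the two bijections with the order-reversing property yields the dual isomorphism.

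The only step that needs genuine care is verifying that the Lindenbaum--Tarski quotient $\mathfrak A_\L$ really is a monadic Heyting algebra, i.e.\ that the operations $[\varphi] \mapsto [\forall \varphi]$ and $[\varphi] \mapsto [\exists \varphi]$ are well defined on $\sim_\L$-classes and satisfy the equations corresponding to the axioms of \cref{def: MIPC}. Well-definedness uses precisely the congruence properties of $\forall$ and $\exists$, which follow from the $\sfour$- and $\sfive$-axioms together with the necessitation rule in any extension of $\mipc$; the rest is a direct translation of the propositional axioms listed in \cref{def: MIPC} into equations of \cref{def:mha}. Once this bookkeeping is in place, the theorem follows.
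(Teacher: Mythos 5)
The paper does not prove this statement itself; it is quoted with a citation to \cite[Thm.~3]{Bez98}, so there is no in-paper proof to compare against. Your argument is the standard one for results of this kind --- order-reversing maps $\Alg$ and $\Log$, the Lindenbaum--Tarski algebra relative to $\L$ for $\Log(\Alg(\L))\subseteq\L$, and Birkhoff's theorem plus the $s\leftrightarrow t$ coding of equations for $\Alg(\Log(\mathbb V))\subseteq\mathbb V$ --- and it is correct; the only point worth making explicit is that $\mathfrak A_\L\in\Alg(\L)$ uses closure of $\L$ under substitution (an arbitrary valuation into $\mathfrak A_\L$ is induced by a substitution), which your ``routine check'' phrasing leaves implicit.
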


Esakia duality for Heyting algebras \cite{Esa74,Esa19} was generalized to monadic Heyting algebras in \cite{Bez99}. 
As usual, for a binary relation $R$ on a set $X$ and $S \subseteq X$, we write $R[S]$ for the $R$-image and $R^{-1}[S]$ for the $R$-inverse image of $S$. 
When $S= \{x\}$, we simply write $R[x]$ and $R^{-1}[x]$. We call $S$
an {\em $R$-upset} if $R[S]\subseteq S$ and an {\em $R$-downset} if $R^{-1}[S] \subseteq S$.
If $R$ is a quasi-order (reflexive and transitive relation), we denote by $E_R$ the equivalence relation given by 
\[
x E_R y \ \Longleftrightarrow \ x R y \; \& \; y R x.
\]

A {\em Stone space} is a topological space $X$ that is compact, Hausdorff, and zero-dimensional. We call a binary relation $R$ on $X$ \emph{continuous} if $R[x]$ is closed for each $x \in X$ and $R^{-1}[U]$ is clopen for each clopen $U \subseteq X$.

\begin{definition}\plabel{def:ono}
A \textit{descriptive $\mipc$-frame} is a tuple $\mathfrak F=(X,R,Q)$ such that
\begin{enumerate}
\item\label[def:ono]{def:ono:item1} $X$ is a Stone space,
\item\label[def:ono]{def:ono:item2} $R$ is a continuous partial order,
\item\label[def:ono]{def:ono:item3} $Q$ is a continuous quasi-order,
\item\label[def:ono]{def:ono:item4} 
$U$ a clopen $R$-upset $\Longrightarrow$ $Q[U]$ is a clopen $R$-upset,
\item\label[def:ono]{def:ono:item5} $R \subseteq Q$,
\item\label[def:ono]{def:ono:item6} $x Q y \Longrightarrow \exists z \in X : x R z \; \& \; z E_Q y$.
\end{enumerate}
\begin{figure}[!ht]
\begin{center}
\begin{tikzpicture}
	\begin{pgfonlayer}{nodelayer}
		\node [style=black dot] (1) at (0, 0) {};
		\node [style=black dot] (2) at (0, 2) {};
		\node [style=black dot] (3) at (2, 2) {};
		\node [style=none] (4) at (1, 2.25) {$E_Q$};
		\node [style=none] (5) at (-0.27, 1) {$R$};
		\node [style=none] (6) at (1.4, 0.96) {$Q$};
		\node [style=none] (7) at (0, -0.27) {$x$};
		\node [style=none] (8) at (-0.2, 2.27) {$\exists z$};
		\node [style=none] (9) at (2.25, 2.25) {$y$};
	\end{pgfonlayer}
	\begin{pgfonlayer}{edgelayer}
		\draw [style=dashed Latex arrow] (1) to (2);
		\draw [style=Latex arrow] (1) to (3);
		\draw [style=none dashed] (2) to (3);
	\end{pgfonlayer}
\end{tikzpicture}
\end{center}
\end{figure}
\end{definition}

\begin{remark}
The condition in \cref{def:ono:item6} implies that it is possible to recover the quasi-order $Q$ from $R$ and $E_Q$ in any descriptive $\mipc$-frame. In fact, descriptive $\mipc$-frames can be equivalently presented as triples $(X,R,E)$ where $X$ is a Stone space, $R$ is a quasi-order, and $E$ is an equivalence relation satisfying the conditions corresponding to the ones in \cref{def:ono} (see \cite[Thm.~11(a)]{Bez99}). For our purposes it is more convenient to work with the quasi-order $Q$, but we will employ this different perspective to work with descriptive frames for $\ms$.
\end{remark}

\begin{definition}\plabel{def:mipcfrm-morphisms}
Let $\F_1=(X_1,R_1,Q_1)$ and $\F_2=(X_2,R_2,Q_2)$ be descriptive $\mipc$-frames. A map $f \colon X_1 \to X_2$ is a \emph{morphism of descriptive $\mipc$-frames} if 
\begin{enumerate}
\item\label[def:mipcfrm-morphisms]{def:mipcfrm-morphisms:item1} $f$ is continuous, 
\item\label[def:mipcfrm-morphisms]{def:mipcfrm-morphisms:item2} $R_2[f(x)]=fR_1[x]$ for each $x\in X_1$,
\item\label[def:mipcfrm-morphisms]{def:mipcfrm-morphisms:item3} $Q_2[f(x)]=fQ_1[x]$ for each $x\in X_1$,
\item\label[def:mipcfrm-morphisms]{def:mipcfrm-morphisms:item4} $Q_2^{-1}[f(x)]= R_2^{-1}fQ_1^{-1}[x]$ for each $x\in X_1$.
\end{enumerate}
\end{definition}

\begin{remark}
\cref{def:mipcfrm-morphisms:item2} says that $f$ is a p-morphism with respect to $R$, and \cref{def:mipcfrm-morphisms:item3} that $f$ is a p-morphism with respect to $Q$.
The left-to-right inclusion of \cref{def:mipcfrm-morphisms:item4} follows from \cref{def:mipcfrm-morphisms:item3}, the other inclusion can be expressed as follows: 
\[
z Q_2 f(x) \Longrightarrow \exists y \in X_1 : y Q_1 x \; \& \; z R_2 f(y).
\]
\begin{figure}[!ht]
\begin{center}
\begin{tikzpicture}
	\begin{pgfonlayer}{nodelayer}
		\node [style=none] (4) at (-1, 1) {};
		\node [style=none] (5) at (2.5, 1) {};
		\node [style=none] (6) at (-1, -1.25) {};
		\node [style=none] (7) at (1, -1.25) {};
		\node [style=black dot] (8) at (-1.5, 1) {};
		\node [style=black dot] (9) at (-1.5, -1.25) {};
		\node [style=black dot] (10) at (3, 1) {};
		\node [style=none] (11) at (3, -2.5) {};
		\node [style=none] (12) at (-1.75, 1.25) {$x$};
		\node [style=none] (13) at (-1.75, -1.5) {$\exists y$};
		\node [style=none] (14) at (3.27, -2.75) {$z$};
		\node [style=none] (15) at (3.55, 1.25) {$f(x)$};
		\node [style=none] (16) at (0.75, 1.275) {$f$};
		\node [style=none] (17) at (0, -0.975) {$f$};
		\node [style=none] (18) at (-1.825, -0.25) {$Q_1$};
		\node [style=none] (19) at (3.325, -0.75) {$Q_2$};
		\node [style=none] (20) at (1.75, -0.75) {$f(y)$};
		\node [style=black dot] (21) at (1.5, -1.25) {};
		\node [style=black dot] (22) at (3, -2.5) {};
		\node [style=none] (23) at (2.025, -2.25) {$R_2$};
	\end{pgfonlayer}
	\begin{pgfonlayer}{edgelayer}
		\draw [style=mapsto] (4.center) to (5.center);
		\draw [style=dashed mapsto] (6.center) to (7.center);
		\draw [style=dashed Latex arrow] (9) to (8);
		\draw [style=Latex arrow] (11.center) to (10);
		\draw [style=dashed Latex arrow] (22) to (21);
	\end{pgfonlayer}
\end{tikzpicture}
\end{center}
\end{figure}

\noindent We will refer to this condition as being a \emph{weak p-morphism}\footnote{Observe that $f$ is a p-morphism with respect to $Q^{-1}$ provided $Q_2^{-1}[f(x)]=fQ_1^{-1}[x]$ for each $x\in X_1$. The above condition is weaker, thus justifying the name.} with respect to $Q^{-1}$.
\end{remark}

Clearly descriptive $\mipc$-frames and their morphisms form a category, which we denote by $\mipcfrm$.

\begin{remark}\label{rem:isos mipc}
Since the composition of morphisms in $\mipcfrm$ is the standard function composition, it follows that isomorphisms in $\mipcfrm$ are bijections that preserve and reflect the relations $R$ and $Q$ (see \cite[Prop.~1.4.15]{Esa19}). As a consequence, they also preserve and reflect the equivalence relation $E_Q$.
\end{remark}

We recall (see, e.g., \cite[Def.~3.38]{AHS06}) that two categories $\mathbf{A}$ and $\mathbf{B}$ are \emph{dually equivalent} provided $\mathbf{A}^{\text{op}}$ and $\mathbf{B}$ are equivalent. 

\begin{theorem}\cite[Thm.~17]{Bez99} \label{thm: duality for MHA}
$\mha$ is dually equivalent to $\mipcfrm$.
\end{theorem}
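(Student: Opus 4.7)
The plan is to construct a pair of contravariant functors between $\mha$ and $\mipcfrm$ and show that they yield a dual equivalence, building on Esakia duality for Heyting algebras. The key insight provided by \cref{rem: H0} is that the monadic structure on a Heyting algebra $H$ is entirely captured by the Heyting subalgebra $H_0 = \{\forall a : a \in H\}$ together with its adjoints $\forall$ (right) and $\exists$ (left) to the inclusion $H_0 \hookrightarrow H$; dually, this inclusion will correspond to a surjective Esakia morphism from the Esakia dual of $H$ to that of $H_0$, which I would encode via an equivalence relation.

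First, I would define a functor $(-)_\ast \colon \mha \to \mipcfrm$. Given $\A = (H, \forall, \exists) \in \mha$, let $(X,R)$ be the Esakia dual of $H$: $X$ is the set of prime filters of $H$ with the Priestley topology, and $R$ is set-theoretic inclusion. Define an equivalence relation $E$ on $X$ by $x E y \iff x \cap H_0 = y \cap H_0$, and set $Q := R \circ E$, so $x Q y$ iff there exists $z$ with $x R z$ and $z E y$. The bulk of the work is verifying the axioms of \cref{def:ono}: continuity of $Q$ and the crucial \cref{def:ono:item4} both trace back to the existence of the adjoints $\forall$ and $\exists$ to $H_0 \hookrightarrow H$ (and hence the topological niceness of the quotient map to the dual of $H_0$), while \cref{def:ono:item5,def:ono:item6} are essentially tautological from $Q := R \circ E$ and $R$ being reflexive.

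Conversely, I would define $(-)^\ast \colon \mipcfrm \to \mha$. Given $\F = (X,R,Q)$, take $H := \ClopUp(X,R)$, which is a Heyting algebra by Esakia duality, and equip it with $\exists U$ defined as the smallest $E_Q$-saturated clopen $R$-upset containing $U$ (well-defined by \cref{def:ono:item4,def:ono:item5,def:ono:item6}) and $\forall U$ as the largest $E_Q$-saturated clopen $R$-upset contained in $U$. Checking the axioms of \cref{def:mha} is then a routine relational computation. For morphisms, a monadic Heyting homomorphism $h \colon \A_1 \to \A_2$ induces the inverse-image map $h^{-1}$ on prime filters; preservation of $\exists$ by $h$ corresponds to \cref{def:mipcfrm-morphisms:item3}, and preservation of $\forall$ to \cref{def:mipcfrm-morphisms:item4} (the weak $p$-morphism condition for $Q^{-1}$). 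In the reverse direction, a morphism $f \colon \F_1 \to \F_2$ in $\mipcfrm$ pulls clopen $R$-upsets back to a monadic Heyting homomorphism. The natural isomorphisms $\A \cong (\A_\ast)^\ast$ and $\F \cong (\F^\ast)_\ast$ are inherited from Esakia duality on the underlying Heyting algebra and Esakia-space data, and the monadic component matches because $E$ has been constructed from $H_0$ via \cref{rem: H0}.

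The main obstacle I anticipate is the precise matching between algebraic and relational axioms, particularly (a) that \cref{def:ono:item4} is equivalent to the existence of \emph{both} adjoints $\forall$ and $\exists$ to $H_0 \hookrightarrow H$ (continuity of $Q$ and closedness of $Q[U]$ encode one direction, clopenness encodes the other), and (b) that the weak $p$-morphism condition \cref{def:mipcfrm-morphisms:item4} for $Q^{-1}$---rather than the naively expected full $p$-morphism condition---is exactly what corresponds to preservation of $\forall$ by $h$. This asymmetry between $\forall$ and $\exists$ on the morphism side reflects the fact that $\forall$, unlike $\exists$, cannot be computed purely from $Q$-images but also relies on the $R$-upset structure, and making this precise is where \cref{def:ono:item6} does its essential work.
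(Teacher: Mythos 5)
Your overall architecture is the right one and matches the paper's setup (the paper does not prove this theorem but cites \cite[Thm.~17]{Bez99} and records the functors in \cref{rem:ClopUp and Pf}): extend Esakia duality, encode $H_0$ by the equivalence relation $E$ on prime filters, let $Q$ be the induced composite, and read $\forall,\exists$ off as box/diamond for $Q$ on clopen $R$-upsets. However, there is one step that would fail as written: you have the correspondence between the operations and the morphism conditions backwards. For $f\colon X_1\to X_2$, since $\forall U = X\setminus Q^{-1}[X\setminus U]$, one computes $x\in f^{-1}[\forall_2 U] \iff Q_2[f(x)]\subseteq U$ and $x\in \forall_1 f^{-1}[U] \iff f[Q_1[x]]\subseteq U$; quantifying over all clopen $R_2$-upsets $U$ determines these closed sets up to $R_2$-\emph{upward} closure, and in the presence of \cref{def:mipcfrm-morphisms:item2} this yields exactly $Q_2[f(x)]=f[Q_1[x]]$, i.e.\ \cref{def:mipcfrm-morphisms:item3}. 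Dually, $\exists U = Q[U]$ gives $x\in f^{-1}[\exists_2 U]\iff Q_2^{-1}[f(x)]\cap U\neq\emptyset$ and $x\in\exists_1 f^{-1}[U]\iff f[Q_1^{-1}[x]]\cap U\neq\emptyset$; clopen upsets separate closed sets only up to $R_2$-\emph{downward} closure, which is precisely why the weak condition $Q_2^{-1}[f(x)]=R_2^{-1}fQ_1^{-1}[x]$ of \cref{def:mipcfrm-morphisms:item4}, rather than the full p-morphism condition for $Q^{-1}$, is what corresponds to preservation of $\exists$. So $\forall$ goes with \cref{def:mipcfrm-morphisms:item3} and $\exists$ with \cref{def:mipcfrm-morphisms:item4}, the opposite of your assignment, and your closing explanation of the asymmetry is likewise inverted.

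Two smaller points. Identifying your $Q$ (``$xRz$ and $zEy$ for some $z$'') with the paper's $xQy\iff x\cap H_0\subseteq y$ requires a genuine lifting lemma (given $x\cap H_0\subseteq y\cap H_0$, produce a prime filter $z\supseteq x$ with $z\cap H_0=y\cap H_0$); you correctly gesture at the adjoints here, but this is where \cref{def:ono:item6} is actually earned rather than being ``essentially tautological.'' Also, \cref{def:ono:item4} dualizes only the left adjoint $\exists$ (clopenness of $Q[U]$); the right adjoint $\forall$ is already taken care of by continuity of $Q$ in \cref{def:ono:item3}, so (4) is not ``equivalent to the existence of both adjoints.''
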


\begin{remark}\label{rem:ClopUp and Pf}
The functors establishing the above duality generalize the functors establishing Esakia duality \cite{Esa74,Esa19}.
If $\F=(X,R,Q)$ is a descriptive $\mipc$-frame, we let $\F^*$ be the 
Heyting algebra of clopen $R$-upsets of $X$ and define $\forall,\exists$ on $\F^*$ by
\[
\forall U = X \setminus Q^{-1}[X\setminus U] \ \mbox{ and } \ \exists U = Q[U].
\] 
If $f \colon \F_1 \to \F_2$ is a $\mipcfrm$-morphism, we define $f^* \colon \F_2^* \to \F_1^*$ by $f^*(U)=f^{-1}[U]$ for each $U \in \F_2^*$.

If $\A=(H, \forall, \exists)$ is a monadic Heyting algebra, we let $\A_*$ be the tuple $(X,R,Q)$ such that $(X,R)$ is the Esakia space\footnote{We recall that $X$ is the set of prime filters of $H$, $R$ is set-inclusion, and topology is given by the basis $\{\sigma(a) \setminus \sigma(b) : a,b \in H\}$, where $\sigma(a) = \{ x \in X : a \in x\}$.} of $H$ and $Q$ is defined by
\[
xQy \Longleftrightarrow x\cap H_0 \subseteq y.
\]
If $h \colon \A_1 \to \A_2$ is an $\mha$-morphism, we define $h_* \colon (\A_2)_* \to (\A_1)_*$ by $h_*(x)=h^{-1}[x]$ for each $x \in (\A_2)_*$.
\end{remark}

\begin{remark}\label{rem:onto and 1-1 mha mipcfrm}
Under the dual equivalence of \cref{thm: duality for MHA}, one-to-one morphisms in $\mha$ correspond to onto morphisms in $\mipcfrm$ and onto morphisms in $\mha$ to one-to-one morphisms in $\mipcfrm$ (see \cite[p.~39]{Bez99}).
\end{remark}

\section{$\ms$}\label{sec:ms4}

Let $\mathcal{L}_{\Box \forall}$ be a propositional modal language with two modalities $\Box$ and $\forall$. 

\begin{definition}\label{def:ms4}
The \emph{monadic $\sfour$}, denoted $\ms$, is the smallest set of formulas in 
$\mathcal{L}_{\Box \forall}$ containing all theorems of the classical propositional calculus $\cpc$, the $\sfour$-axioms for $\Box$, the $\sfive$-axioms for $\forall$, the left commutativity axiom
\[
\Box \forall p \to \forall \Box p,
\]
and closed under the rules of modus ponens, substitution, $\Box$-necessitation, and $\forall$-necessi\-tation.
\end{definition}

\begin{notation}
We let $\bbox$ denote the compound modality $\Box\forall$. 
\end{notation}

\begin{remark}
It is straightforward to check that $\bbox$ is an $\mathsf{S4}$-modality and that both $\bbox p \to \Box p$ and $\bbox p \to \forall p$ are provable in $\ms$. Therefore, $\bbox$ is a \emph{master modality} for $\ms$ (see, e.g., \cite[p.~71]{Kra99}).
\end{remark}

The algebraic semantics for $\ms$ is provided by $\ms$-algebras, called bimodal algebras in \cite[p.~145]{FS77}. 

\begin{definition}\label{def:ms-alg}
A \textit{monadic $\sfour$-algebra}, or an \emph{$\ms$-algebra} for short, is a tuple $\mathfrak B=(B, \Box, \forall)$ such that $B$ is a boolean algebra and $\Box, \forall$ are two unary functions on $B$ satisfying the axioms corresponding to the ones in \cref{def:ms4}.
\end{definition}

\begin{remark}\label{rem:equiv def ms4-alg}
\hfill\begin{enumerate}
\item A tuple 
$(B, \Box, \forall)$ is an $\ms$-algebra provided $(B, \Box)$ is an $\sfour$-algebra, $(B, \forall)$ is an $\sfive$-algebra, and $\Box \forall a \leq \forall \Box a$ for each $a \in B$.
\item As usual, we define $\exists \colon B \to B$ by $\exists a = \neg \forall \neg a$ for each $a \in B$.
\item Let $(B, \Box, \forall)$ be an $\ms$-algebra. We denote by $\bbox$ the unary function on $B$, obtained as the composition $\Box\forall$, corresponding to the master modality $\bbox$ of $\ms$. Then $(B, \bbox)$ is an $\sfour$-algebra and $\bbox a \leq \Box a, \forall a$ for each $a \in B$. As a consequence, the following identities hold for each $a \in B$:
\[
\bbox \Box a = \Box \bbox a = \bbox \forall a = \forall \bbox a = \bbox a.
\]
\end{enumerate}
 
\end{remark}

It immediately follows from \cref{def:ms-alg} that the class of $\ms$-algebras forms a variety. We denote the corresponding category by $\msa$.

Validity of formulas in $\ms$-algebras is defined as for monadic Heyting algebras, and so are extensions of $\ms$. We then have the following analogue of \cref{thm: alg comp MIPC,thm: lattice iso}:

\begin{theorem}\plabel{thm: lattice iso ms}
\hfill\begin{enumerate}
\item\label[thm: lattice iso ms]{thm: lattice iso ms:item1} $\ms\vdash\varphi$ iff $\msa \vDash \varphi$ for each formula $\varphi$ of $\mathcal{L}_{\Box \forall}$.
\item\label[thm: lattice iso ms]{thm: lattice iso ms:item2} The complete lattice $\Lambda(\ms)$ of extensions of $\ms$ is dually isomorphic to the complete lattice $\Lambda(\msa)$ of subvarieties of $\msa$.
\end{enumerate}
\end{theorem}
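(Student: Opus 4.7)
The plan is to adapt the standard Lindenbaum--Tarski construction that yields \cref{thm: alg comp MIPC,thm: lattice iso} for $\mipc$ to the bimodal signature of $\ms$. Neither part of the statement requires any new ideas beyond what is already known for $\mipc$ and for ordinary $\sfour$-algebras, so the argument is essentially a verification.

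For \cref{thm: lattice iso ms:item1}, I would first establish soundness by induction on the length of derivations: each axiom listed in \cref{def:ms4} corresponds by construction to an identity holding in every $\ms$-algebra (\cref{def:ms-alg}), and the three inference rules of modus ponens, substitution, and $\Box$- and $\forall$-necessitation preserve validity on any class of $\ms$-algebras. For completeness, I would form the Lindenbaum--Tarski algebra: on the set of $\mathcal{L}_{\Box\forall}$-formulas, set $\varphi \sim \psi$ iff $\ms \vdash \varphi \leftrightarrow \psi$. This is a congruence with respect to the propositional connectives as well as $\Box$ and $\forall$, using the normality axioms together with the necessitation rules of $\ms$. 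The quotient becomes an $\ms$-algebra in which the class of $\varphi$ equals $1$ exactly when $\ms \vdash \varphi$, so any non-theorem is refuted.

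For \cref{thm: lattice iso ms:item2}, I would define $\Alg(\L) = \{\B \in \msa : \B \vDash \L\}$ and $\Log(\mathbb{V}) = \{\varphi : \mathbb{V} \vDash \varphi\}$, and show that these are mutually inverse, inclusion-reversing maps between $\Lambda(\ms)$ and $\Lambda(\msa)$. The map $\Alg$ lands in $\Lambda(\msa)$ because validity of a set of equations is preserved under $\H$, $\S$, and $\P$; the map $\Log$ lands in $\Lambda(\ms)$ because the algebraic correspondents of modus ponens, substitution, and both necessitation rules hold in any class of $\ms$-algebras, and $\ms \subseteq \Log(\mathbb{V})$ follows from \cref{thm: lattice iso ms:item1}. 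The identity $\Log \Alg(\L) = \L$ is a direct consequence of the Lindenbaum--Tarski construction applied to $\ms + \L$, while $\Alg \Log(\mathbb{V}) = \mathbb{V}$ is Birkhoff's HSP theorem. The only novelty relative to the $\mipc$ case is the left commutativity axiom $\Box\forall p \to \forall\Box p$, but it is absorbed as the equation $\Box\forall a \leq \forall\Box a$ in \cref{def:ms-alg} and requires no separate treatment; accordingly I anticipate no significant obstacle beyond this routine verification.
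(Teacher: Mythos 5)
Your proposal is correct and matches the paper's treatment: the paper states this theorem as the direct analogue of \cref{thm: alg comp MIPC,thm: lattice iso}, obtained by the standard Lindenbaum--Tarski construction and the usual $\Alg$/$\Log$ correspondence (with Birkhoff's theorem handling $\Alg\Log(\mathbb{V})=\mathbb{V}$), which is exactly what you spell out. The only point you compress slightly is that every equation between $\ms$-algebra terms reduces to one of the form $t=1$ over a Boolean base, but this is routine and does not affect the argument.
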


As in \cref{thm: lattice iso}, the above isomorphism is obtained by associating with each extension $\sf M$ of $\ms$ 
the variety $\Alg(\M)$ of $\ms$-algebras validating all formulas in $\M$. 
Conversely, each variety $\mathbb{V}$ of $\ms$-algebras gives rise to the extension $\Log(\mathbb{V})$ of $\ms$ consisting of the formulas valid in all members of $\mathbb{V}$. 

J\'onsson-Tarski duality for BAOs (boolean algebras with operators) takes on the following form for $\ms$-algebras.

\begin{definition}\plabel{def:descriptive ms-frame}
A \emph{descriptive $\ms$-frame} is a tuple $\mathfrak G=(Y,R,E)$ such that
\begin{enumerate}
\item\label[def:descriptive ms-frame]{def:descriptive ms-frame:item1} $Y$ is a Stone space,
\item\label[def:descriptive ms-frame]{def:descriptive ms-frame:item2} $R$ is a continuous quasi-order,
\item\label[def:descriptive ms-frame]{def:descriptive ms-frame:item3} $E$ is a continuous equivalence relation,
\item\label[def:descriptive ms-frame]{def:descriptive ms-frame:item4}
$x E y \; \& \; y R z \Longrightarrow \exists u \in Y : x R u \; \& \; u E z$.
\end{enumerate}
\begin{figure}[!ht]
\begin{center}
\begin{tikzpicture}
\node at (-0.2,-0.25) {$x$};
\node at (-0.25,2.27) {$\exists u$};
\node at (2.2,-0.25) {$y$};
\node at (2.2,2.25) {$z$};
\fill (0,0) circle(2pt);
\fill (0,2) circle(2pt);
\fill (2,0) circle(2pt);
\fill (2,2) circle(2pt);
\draw [dashed, -{Latex[width=1mm]}] (0,0) -- (0,2);
\draw [-{Latex[width=1mm]}] (2,0) -- (2,2);
\draw [dashed] (0,2) -- (2,2);
\draw (0,0) -- (2,0);
\node [above] at (1,2) {$E$};
\node [above] at (1,0) {$E$};
\node [left] at (0,1) {$R$};
\node [right] at (2,1) {$R$};
\end{tikzpicture}
\end{center}
\end{figure}
\end{definition}

\begin{definition}\plabel{def:msfrm-morphisms}
Let $\G_1=(Y_1,R_1,E_1)$ and $\G_2=(Y_2,R_2,E_2)$ be descriptive $\ms$-frames. A map $f \colon Y_1 \to Y_2$ is a \emph{morphism of descriptive $\ms$-frames} if 
\begin{enumerate}
\item\label[def:msfrm-morphisms]{def:msfrm-morphisms:item1} $f$ is continuous, 
\item\label[def:msfrm-morphisms]{def:msfrm-morphisms:item2} $R_2[f(y)]=fR_1[y]$ for each $y\in Y_1$,
\item\label[def:msfrm-morphisms]{def:msfrm-morphisms:item3} $E_2[f(y)]=fE_1[y]$ for each $y\in Y_1$.
\end{enumerate}
\end{definition}

\begin{remark}
\cref{def:msfrm-morphisms:item2} says that $f$ is a p-morphism with respect to $R$, and \cref{def:msfrm-morphisms:item3} that $f$ is a p-morphism with respect to $E$.
\end{remark}

Clearly descriptive $\ms$-frames and their morphisms form a category, which we denote by $\msfrm$.

\begin{remark}\label{rem:isos}
As in $\mipcfrm$, the composition of morphisms in $\msfrm$ is the standard function composition. Therefore, isomorphisms in $\msfrm$ are bijections that preserve and reflect the relations $R$ and $E$.
\end{remark}

We have the following version of J\'onsson-Tarski duality for $\ms$-algebras. For the reader's convenience, we give a sketch of proof (see also \cite[Rem.~2.14]{BM24}).

\begin{theorem}\label{thm: duality for ms4}
$\msa$ is dually equivalent to $\msfrm$. 
\end{theorem}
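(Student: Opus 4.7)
The plan is to follow the standard J\'onsson-Tarski duality for BAOs, adapted to the bimodal signature $(\Box,\forall)$ of $\ms$-algebras.

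First, I would define contravariant functors $(-)^* \colon \msfrm \to \msa$ and $(-)_* \colon \msa \to \msfrm$ as follows. For a descriptive $\ms$-frame $\G=(Y,R,E)$, set $\G^*=(\Clop(Y),\Box_R,\forall_E)$ where
\[
\Box_R U = Y \setminus R^{-1}[Y \setminus U] \quad \text{and} \quad \forall_E U = Y \setminus E^{-1}[Y \setminus U].
\]
For an $\ms$-algebra $\B=(B,\Box,\forall)$, set $\B_*=(\Uf(B),R_\Box,E_\forall)$, where $\Uf(B)$ is the Stone space of ultrafilters of $B$ and
\[
x R_\Box y \iff (\forall a \in B)(\Box a \in x \Rightarrow a \in y), \qquad x E_\forall y \iff (\forall a \in B)(\forall a \in x \Rightarrow a \in y).
\]
On morphisms, set $f^*(U) = f^{-1}[U]$ and $h_*(x) = h^{-1}[x]$.

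I would then verify that the object assignments are well-defined. On the frame side, the $\sfour$-axioms for $\Box_R$ and the $\sfive$-axioms for $\forall_E$ follow from $R$ being a quasi-order and $E$ an equivalence relation, while continuity ensures that $\Box_R U$ and $\forall_E U$ are clopen whenever $U$ is. On the algebra side, standard J\'onsson-Tarski arguments applied separately to the reducts $(B,\Box)$ and $(B,\forall)$ show that $R_\Box$ is a continuous quasi-order and $E_\forall$ a continuous equivalence relation. Functoriality on morphisms follows routinely from the p-morphism conditions and their algebraic counterparts. The natural isomorphisms $\B \cong (\B_*)^*$ via $a \mapsto \widehat{a} := \{x \in \Uf(B) : a \in x\}$ and $\G \cong (\G^*)_*$ via $y \mapsto \{U \in \Clop(Y) : y \in U\}$ are inherited from Stone duality; one need only verify that they intertwine $\Box$ and $\forall$ with $\Box_R$ and $\forall_E$, reducing again to the standard J\'onsson-Tarski argument for each modality.

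The main obstacle is the correspondence between the left commutativity axiom $\Box\forall a \leq \forall\Box a$ and the confluence condition in \cref{def:descriptive ms-frame:item4}. For the direction from confluence to the axiom on $\G^*$, I would unfold definitions: if $y \in \Box_R \forall_E U$ and $y E y' R z$, then confluence applied to $y E y'$ and $y' R z$ yields $u$ with $y R u$ and $u E z$; since $y \in \Box_R \forall_E U$ forces $E[u] \subseteq U$, we obtain $z \in U$, proving $y \in \forall_E \Box_R U$. For the converse on $\B_*$, I would argue contrapositively. If confluence fails at some $x E_\forall y$ and $y R_\Box z$, then the closed sets $R_\Box[x]$ and $E_\forall[z]$ in $\Uf(B)$ are disjoint. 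Since $\Uf(B)$ is a Stone space, they can be separated by a clopen $V_0$ with $E_\forall[z] \subseteq V_0$ and $V_0 \cap R_\Box[x] = \emptyset$; passing to $V = \{w \in \Uf(B) : E_\forall[w] \subseteq V_0\}$ produces a clopen (by continuity of $E_\forall$) $E_\forall$-saturated set that still contains $E_\forall[z]$ and remains disjoint from $R_\Box[x]$. Setting $\widehat{a} = \Uf(B) \setminus V$ for the corresponding $a \in B$, one checks that $\Box\forall a \in x$ but $\forall\Box a \notin x$, contradicting the axiom. The remaining routine verifications then complete the dual equivalence.
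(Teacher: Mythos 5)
Your proposal is correct and follows the same overall blueprint as the paper: both set up the standard bimodal J\'onsson--Tarski functors (your $E_\forall$, defined by $\forall a \in x \Rightarrow a \in y$, coincides with the paper's $x \cap B_0 = y \cap B_0$ for $\sfive$-operators), and both reduce everything except the commutativity axiom to the separate $\sfour$- and $\sfive$-reducts. The one place where you genuinely diverge is the key step, namely showing that $\Box\forall a \le \forall\Box a$ forces the confluence condition of \cref{def:descriptive ms-frame:item4} on $\B_*$. The paper first derives the clopen-level inclusion $ER^{-1}[U] \subseteq R^{-1}E[U]$ and then invokes Esakia's Lemma to descend to points, i.e.\ to get $ER^{-1}[y]\subseteq R^{-1}E[y]$ for all $y$. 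You instead argue contrapositively at the level of points: a failure of confluence makes the closed sets $R_\Box[x]$ and $E_\forall[z]$ disjoint, a clopen separates them in the Stone space, and $E$-saturating that clopen (which stays clopen by continuity of $E_\forall$ and still contains $E_\forall[z]$ since $E$ is an equivalence) produces an element $a$ with $\Box\forall a \in x$ but $\forall\Box a \notin x$. I checked this computation and it is sound. Your route trades Esakia's Lemma for an explicit separation-and-saturation construction, which is arguably more self-contained here because the saturation trick exploits that $E$ is an equivalence relation; the paper's route is the more systematic correspondence-theoretic argument that would also apply to inclusions between compositions of relations that are not equivalences. Either way the remaining verifications (naturality, the unit isomorphisms intertwining the operators) are routine, as you indicate.
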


\begin{proof}[Sketch of proof.]
The functors establishing the dual equivalence are described as follows.
The functor $(-)^* \colon \msfrm \to \msa$ sends
a descriptive $\ms$-frame $\G=(Y,R,E)$ to $\G^*=(\Clop(Y),\Box,\forall)$, where $\Clop(Y)$ is the boolean algebra of clopens of $Y$ and
$\Box,\forall$ are defined on $\Clop(Y)$ by 
\[
\Box U = Y \setminus R^{-1}[Y \setminus U] \ \mbox{ and } \ \forall U = Y \setminus E[Y \setminus U].
\] 
Since $R$ is reflexive and transitive, $\Box$ is an $\mathsf{S4}$-operator; since $E$ is an equivalence relation, $\forall$ is an $\mathsf{S5}$-operator; and \cref{def:descriptive ms-frame:item4} yields that $\Box \forall U \leq \forall \Box U$. Therefore, $\G^* \in \msa$. If $f$ is a $\msfrm$-morphism, then $f^*$ is $f^{-1}$. Clearly, $f^{-1}$ is a boolean homomorphism and \crefrange{def:msfrm-morphisms:item2}{def:msfrm-morphisms:item3} yields that $f^{-1}$ is an $\msa$-morphism. Thus, $(-)^*$ is well defined.
 
The functor $(-)_* \colon \msa \to \msfrm$ sends an $\ms$-algebra $\B=(B, \Box, \forall)$
to the tuple $\B_*=(Y,R,E)$ such that
$(Y,R)$ is the dual of the $\sfour$-algebra $(B,\Box)$\footnote{That is, $Y$ is the Stone space (the space of ultrafilters) of $B$ and for any $x,y \in Y$ we have $xRy$ iff $\Box^{-1}x \subseteq y$.} and
\[
xEy \Longleftrightarrow x \cap B_0 = y \cap B_0,
\]
where 
$B_0 = \{\forall a : a \in B\}$.
Then $Y$ is a Stone space, $R$ is a continuous quasi-order, and $E$ is a continuous equivalence relation.
The commutativity axiom $\Box \forall a \le \forall \Box a$ for every $a \in B$ 
implies that
$ER^{-1}[U] \subseteq R^{-1}E[U]$ for every $U \in \Clop(Y)$. A standard argument using Esakia's Lemma (see \cite[Lem.~3.3.12]{Esa19} or \cite[p.~350]{CZ97}) then yields that $ER^{-1}[y] \subseteq R^{-1}E[y]$ for every $y \in Y$. It is straightforward to see that this last condition is equivalent to  \cref{def:descriptive ms-frame:item4}. Therefore, $\B_* \in \msfrm$. If $f$ is an $\msa$-morphism, then $f_*$ is $f^{-1}$. 
It follows from J\'onsson-Tarski duality that $f^{-1}$ is an $\msa$-morphism, so $(-)_*$ is well defined, and that $(-)^*$ and $(-)_*$ establish the desired dual equivalence between $\msa$ and $\msfrm$. 
\end{proof}

\begin{notation}
We use the same notation for the functors establishing duality between $\msa$ and $\msfrm$ and the ones establishing duality between $\mha$ and $\mipcfrm$, but it will always be clear from the context which functor we work with. To avoid confusion, we will always denote monadic Heyting algebras by $\A$ and $\ms$-algebras by $\B$. Similarly, we will denote descriptive $\mipc$-frames by $\F$ and descriptive $\ms$-frames by $\G$.
\end{notation}

\section{The G\"odel translation} \label{sec: Godel translation}

We recall (see, e.g., \cite[p.~96]{CZ97}) that the G\"odel translation $(-)^t$ of $\ipc$ into $\sf S4$ is defined by
\begin{align*}
& \bot^t = \bot\\
& p^t = \Box p \quad \mbox{for each propositional letter } p \\
& (\varphi \land \psi)^t = \varphi^t \land \psi^t \\
& (\varphi \lor \psi)^t = \varphi^t \lor \psi^t \\
& (\varphi \to \psi)^t = \Box (\neg \varphi^t \lor \psi^t).
\end{align*}

Fischer Servi \cite{FS77} (see also \cite{FS78a}) extended the G\"odel translation to a translation of $\mipc$ into $\ms$ as follows:
\begin{align*}
(\forall \varphi)^t &= \bbox \varphi^t \\
(\exists \varphi)^t &= \exists \varphi^t. 
\end{align*}

\begin{theorem}\cite{FS77}\label{thm: MS4 modal comp of MIPC}
$\mipc \vdash \varphi$ iff $\ms \vdash \varphi^t$ for each $\varphi$ in $\Lae$.
\end{theorem}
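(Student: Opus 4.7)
The plan is to invoke algebraic completeness on both sides (\cref{thm: alg comp MIPC,thm: lattice iso ms:item1}), reducing the biconditional to the algebraic equivalence $\mha \vDash \varphi \iff \msa \vDash \varphi^t$.

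For the forward direction, I would associate to each $\ms$-algebra $\B=(B,\Box,\forall)$ its set of \emph{open elements} $\Op(\B)=\{a \in B : \Box a = a\}$. By the classical McKinsey--Tarski correspondence, $\Op(\B)$ is a Heyting subalgebra of $(B,\wedge,\vee,0,1)$ with implication $a \to b = \Box(\neg a \vee b)$. I would then equip $\Op(\B)$ with the restrictions of $\bbox$ and $\exists$ as its quantifier operations, after checking that these do land in $\Op(\B)$: for $\bbox$ this is immediate from \cref{rem:equiv def ms4-alg}, while for $\exists$ it amounts to $\Box \exists a = \exists a$ when $a = \Box a$, which follows from the right commutativity axiom $\Diamond \forall p \to \forall \Diamond p$. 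The latter holds in every $\ms$-algebra even though only the left commutativity $\Box \forall p \to \forall \Box p$ is declared, because both axioms share the same first-order translate on descriptive $\ms$-frames (the confluence condition $E \circ R \subseteq R \circ E$) and are both Sahlqvist, hence canonical; consequently they axiomatize the same variety. A routine verification then gives $(\Op(\B),\bbox|_{\Op(\B)},\exists|_{\Op(\B)}) \in \mha$, and a straightforward induction on $\varphi \in \Lae$ shows that under any valuation into $\Op(\B)$, the value of $\varphi^t$ computed in $\B$ equals the value of $\varphi$ computed in $\Op(\B)$. Hence $\mha \vDash \varphi$ entails $\B \vDash \varphi^t$ for every $\B \in \msa$.

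For the backward direction, I would argue contrapositively using the dualities of \cref{thm: duality for MHA,thm: duality for ms4}. Given $\A \in \mha$ that refutes $\varphi$, let $\F=(X,R,Q)$ be its dual descriptive $\mipc$-frame and set $\G:=(X,R,E_Q)$. To see that $\G$ is a descriptive $\ms$-frame I would check that $R$ is a continuous quasi-order (in fact a partial order), that $E_Q$ is a continuous equivalence (inherited from the continuity of $Q$ via the alternative $(X,R,E)$-presentation of descriptive $\mipc$-frames noted in the remark after \cref{def:ono}), and that the confluence condition \cref{def:descriptive ms-frame:item4} follows from $R \subseteq Q$, transitivity of $Q$, and the back condition \cref{def:ono:item6}. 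Then $\B:=\G^*$ is an $\ms$-algebra whose open elements are precisely the clopen $R$-upsets of $X$, i.e., $\A$ as a Heyting algebra. Using \cref{def:ono:item6} to derive the set-theoretic identities $R^{-1}E_Q = Q^{-1}$ on $R$-downsets and $E_Q[U]=Q[U]$ on clopen $R$-upsets $U$, I would verify that the induced $\bbox$ and $\exists$ on $\Op(\B)$ coincide with the $\forall_\A$ and $\exists_\A$ of \cref{rem:ClopUp and Pf}. Hence $\Op(\B) \cong \A$ in $\mha$, so the forward direction applied to $\B$ yields $\B \not\vDash \varphi^t$, whence $\msa \not\vDash \varphi^t$.

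The main obstacle is the closure of $\Op(\B)$ under $\exists$ in the forward direction: only left commutativity is built into $\ms$, while this closure naturally demands right commutativity. The key observation is that both commutativity axioms have the same first-order translate on descriptive $\ms$-frames, and being Sahlqvist they are canonical, so they axiomatize the same class of $\ms$-algebras. A secondary but routine subtlety is the confluence check for $(X,R,E_Q)$, which is a short diagram chase combining transitivity of $Q$ with \cref{def:ono:item6}.
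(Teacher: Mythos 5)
The paper does not prove this theorem---it is quoted from \cite{FS77}---so your argument can only be assessed on its own terms and against the machinery the paper builds around it. Your forward direction is essentially \cref{thm:Op and translation} combined with algebraic completeness, and you correctly identify and resolve the one real subtlety there: closure of $\Op(\B)$ under $\exists$ needs the right commutativity $\Diamond\forall p\to\forall\Diamond p$, which is indeed derivable in $\ms$ because its Sahlqvist correspondent is equivalent, modulo symmetry of $E$, to \cref{def:descriptive ms-frame:item4}. That half is sound.

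The backward direction has a genuine gap. You claim that for an arbitrary $\A\in\mha$ with dual $\F=(X,R,Q)$, the triple $\G=(X,R,E_Q)$ is a descriptive $\ms$-frame. The obstruction is \cref{def:descriptive ms-frame:item3}: continuity of $E_Q$ requires $E_Q[U]$ to be clopen for \emph{every} clopen $U\subseteq X$, whereas the descriptive $\mipc$-frame axioms (in either the $(X,R,Q)$ or the $(X,R,E)$ presentation) only guarantee that $Q[U]=E_Q[U]$ is clopen for clopen $R$-\emph{upsets} $U$. Nothing forces $E_Q[U]$ to be clopen for an arbitrary clopen $U$; this is exactly why the paper verifies that property by hand in a special case (in the remark following \cref{thm:SO not well behaved}) and why the realizability of every monadic Heyting algebra as $\Op(\B)$ for some $\B\in\msa$ is explicitly listed as an open problem at the end of \cref{sec:Op} and again in the concluding section. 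Your construction, if it worked in general, would settle that open problem affirmatively. The standard repair is to invoke the finite model property (or at least Kripke completeness) of $\mipc$: if $\mipc\nvdash\varphi$, refute $\varphi$ on a \emph{finite} descriptive $\mipc$-frame, where all topological conditions are vacuous, $(X,R,E_Q)$ genuinely is a descriptive $\ms$-frame, and the rest of your argument (including the identities $Q^{-1}=R^{-1}\circ E_Q$ and $Q[U]=E_Q[U]$ on $R$-upsets, which are correct) goes through verbatim.
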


There are three well-known maps between the lattices of extensions of $\ipc$ and $\sf S4$ (see, e.g., \cite[Sec.~9.6]{CZ97}) that have obvious generalizations to the monadic setting.

\begin{definition}\
\begin{enumerate}[label=\normalfont(\arabic*)]
\item Define $\rho \colon \Lambda(\ms) \to \Lambda(\mipc)$ by $\rho \M = \{ \varphi : \M \vdash \varphi^t \}$.
\item Define $\tau \colon \Lambda(\mipc) \to \Lambda(\ms)$ by $\tau \L = \ms + \{\varphi^t : \L \vdash \varphi \}$.
\item Define $\sigma \colon \Lambda(\mipc) \to \Lambda(\mgrz)$ by $\sigma \L = \mgrz + \{\varphi^t : \L \vdash \varphi \}$.
\end{enumerate}
\end{definition}

Here $\mgrz \coloneqq \ms +\grz$ is the \emph{monadic Grzegorczyk logic} introduced by Esakia \cite{Esa88}, and $\Lambda(\mgrz)$ is the complete lattice of extensions of $\mgrz$.

\begin{proposition}\plabel{prop:rho tau sigma adjoints}
\hfill\begin{enumerate}
\item\label[prop:rho tau sigma adjoints]{prop:rho tau sigma adjoints:item1} $\tau$ is left adjoint to $\rho$.
\item\label[prop:rho tau sigma adjoints]{prop:rho tau sigma adjoints:item2} $\sigma$ is left adjoint to the restriction of $\rho$ to $\Lambda(\mgrz)$.
\item\label[prop:rho tau sigma adjoints]{prop:rho tau sigma adjoints:item3} $\tau$ and $\sigma$ preserve arbitrary joins, and $\rho$ preserves arbitrary meets.
\end{enumerate}
\end{proposition}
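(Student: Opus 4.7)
The plan is to reduce the entire proposition to a direct verification of the Galois connection $\tau \dashv \rho$ and its restricted analogue between $\sigma$ and $\rho$ on $\Lambda(\mgrz)$; once \cref{prop:rho tau sigma adjoints:item1,prop:rho tau sigma adjoints:item2} are in place, \cref{prop:rho tau sigma adjoints:item3} is immediate from the standard fact that left adjoints between complete lattices, viewed as posetal categories, preserve all joins, while right adjoints preserve all meets.

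For \cref{prop:rho tau sigma adjoints:item1}, the first task is to verify that $\rho$ is well-defined, namely that $\rho\M \in \Lambda(\mipc)$ whenever $\M \in \Lambda(\ms)$. The containment $\mipc \subseteq \rho\M$ is exactly the forward direction of \cref{thm: MS4 modal comp of MIPC}. Closure of $\rho\M$ under modus ponens uses the $\sfour$-axiom $\Box p \to p$ together with the shape $(\varphi \to \psi)^t = \Box(\neg\varphi^t \vee \psi^t)$; closure under substitution is routine. Closure under the rule $\varphi/\forall\varphi$ hinges on the identity $(\forall\varphi)^t = \bbox\varphi^t = \Box\forall\varphi^t$: from $\M \vdash \varphi^t$, apply $\forall$-necessitation and then $\Box$-necessitation inside $\M$ to obtain $\M \vdash \Box\forall\varphi^t$, so $\forall\varphi \in \rho\M$. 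Monotonicity of both $\tau$ and $\rho$ is immediate from their definitions.

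Next I would establish the unit $\L \subseteq \rho\tau\L$ and counit $\tau\rho\M \subseteq \M$. The unit is direct: if $\L \vdash \varphi$ then $\varphi^t$ is among the defining generators of $\tau\L$, so $\varphi \in \rho\tau\L$. The counit holds because the generators of $\tau\rho\M$ are the $\ms$-axioms (already in $\M$) together with formulas $\varphi^t$ with $\rho\M \vdash \varphi$, i.e.\ with $\M \vdash \varphi^t$. Combining the unit and counit with monotonicity yields the Galois condition $\tau\L \subseteq \M \Longleftrightarrow \L \subseteq \rho\M$, proving \cref{prop:rho tau sigma adjoints:item1}. For \cref{prop:rho tau sigma adjoints:item2}, the argument is the same upon noting that $\sigma\L \in \Lambda(\mgrz)$ by construction and that, for $\M \in \Lambda(\mgrz)$, the counit $\sigma\rho\M \subseteq \M$ still holds because both $\mgrz$ and every generator $\varphi^t$ (where $\M \vdash \varphi^t$) lie in $\M$.

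The main obstacle, though modest, is verifying closure of $\rho\M$ under the necessitation rule $\varphi/\forall\varphi$: this is the one place where Fischer Servi's definition $(\forall\varphi)^t = \bbox\varphi^t$, combined with the presence of \emph{both} necessitation rules in $\ms$, is essential. Everything else is formal bookkeeping from the definitions of $\tau$, $\sigma$, $\rho$ and the general theory of Galois connections between complete lattices.
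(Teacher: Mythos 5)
Your proof is correct and follows essentially the same route as the paper's: the paper verifies the Galois condition $\tau\L \subseteq \M \iff \L \subseteq \rho\M$ directly by a short chain of equivalences unfolding the definitions, which is just a compressed form of your unit/counit argument, and then derives item (3) from the same general fact about adjoints between complete lattices. The only substantive addition on your side is the explicit check that $\rho\M \in \Lambda(\mipc)$ (in particular closure under necessitation via $(\forall\varphi)^t = \bbox\varphi^t$), which the paper takes for granted when defining $\rho$.
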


\begin{proof}
\eqref{prop:rho tau sigma adjoints:item1}. 
For all $\L \in \Lambda(\mipc)$ and $\M \in \Lambda(\ms)$, 
\[
\tau \L \subseteq \M \iff \{\varphi^t : \L \vdash \varphi \} \subseteq \M \iff \mathsf{L}\subseteq \{\varphi : \mathsf{M}
\vdash \varphi ^{t} \} \iff \L \subseteq \rho \M.
\]
Thus, $\tau$ is left adjoint to $\rho$.

\eqref{prop:rho tau sigma adjoints:item2}. If 
$\M \in \Lambda(\mgrz)$, then \eqref{prop:rho tau sigma adjoints:item1} 
yields $\sigma \L \subseteq \M$ iff $\L \subseteq \rho \M$. Thus, $\sigma$ is left adjoint to the restriction of $\rho$ to $\Lambda(\mgrz)$.

\eqref{prop:rho tau sigma adjoints:item3}. This is obvious since 
left adjoints preserve joins and right adjoints preserve meets (see, e.g., \cite[Prop.~7.34]{DP02}).
\end{proof}

The notions of a modal companion and the intuitionistic fragment have obvious generalizations 
to the monadic setting:

\begin{definition}
Let $\L\in\Lambda(\mipc)$ 
and $\M\in\Lambda(\msfour)$. 
If $\L= \rho \M$, then we call $\M$ a \emph{modal companion} of $\L$ and $\L$ the \emph{intuitionistic fragment} of $\M$. 
\end{definition}

\begin{theorem}\
\begin{enumerate}[label=\normalfont(\arabic*)]
\item\cite{FS77} $\msfour$ is a modal companion of $\mipc$.
\item\cite{Esa88} $\mgrz$ is a modal companion of $\mipc$.
\end{enumerate}
\end{theorem}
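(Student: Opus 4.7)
Part (1) is essentially a restatement of \cref{thm: MS4 modal comp of MIPC}: unfolding the definition gives $\rho\ms = \{\varphi : \ms \vdash \varphi^t\}$, which Fischer Servi's theorem identifies with $\{\varphi : \mipc \vdash \varphi\} = \mipc$. For Part (2), the inclusion $\mipc \subseteq \rho\mgrz$ follows immediately from (1) together with $\ms \subseteq \mgrz$: if $\mipc \vdash \varphi$, then $\ms \vdash \varphi^t$, whence $\mgrz \vdash \varphi^t$. The remaining inclusion $\rho\mgrz \subseteq \mipc$ is the substantive content, which I would establish semantically via its contrapositive.

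Suppose $\mipc \not\vdash \varphi$. By \cref{thm: alg comp MIPC} together with \cref{thm: duality for MHA}, there exist a descriptive $\mipc$-frame $\F = (X,R,Q)$ and a valuation $v$ on $\F^*$ (assigning clopen $R$-upsets to propositional letters) with $v(\varphi) \ne X$. The plan is to build on the same Stone space a descriptive $\mgrz$-frame $\G = (X, R, E_Q)$, lift $v$ to a valuation $\bar v$ on $\G^*$ by $\bar v(p) := v(p)$ (now viewed as a clopen subset of $X$), and prove by induction on $\psi$ that $\bar v(\psi^t) = v(\psi)$ and that $\bar v(\psi^t)$ is an $R$-upset. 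Applied to $\psi = \varphi$, this yields $\G^* \not\vDash \varphi^t$, and since $\G^* \in \Alg(\mgrz)$ we conclude $\mgrz \not\vdash \varphi^t$, as desired.

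To see that $\G$ is a descriptive $\ms$-frame, I would invoke the alternative presentation of descriptive $\mipc$-frames as triples $(X,R,E)$ noted in the remark following \cref{def:ono}, from which $E_Q$ is automatically a continuous equivalence relation on the Stone space $X$. The commutativity clause \cref{def:descriptive ms-frame:item4} is then verified directly: given $x E_Q y$ and $y R z$, one obtains $xQy$ and $yQz$ (the latter via $R \subseteq Q$ from \cref{def:ono:item5}), so $xQz$ by transitivity of $Q$, and \cref{def:ono:item6} supplies $u$ with $xRu$ and $u E_Q z$. Since $R$ is already a continuous partial order on $X$, $\G$ validates the Grzegorczyk axiom, so $\G^* \in \Alg(\mgrz)$.

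The substantive step is the inductive identity $\bar v(\psi^t) = v(\psi)$. The base case and the propositional connectives are routine once one notes that $R$-upsets are exactly the fixed points of $\Box$ in $\G^*$. The quantifier cases hinge on two identities, valid for every clopen $R$-upset $U$ of $X$:
\[
\bbox U = X \setminus Q^{-1}[X \setminus U] \qquad \text{and} \qquad E_Q[U] = Q[U],
\]
where on the left $\bbox$ is the master modality of $\G^*$ and the right-hand sides are the operations $\forall$ and $\exists$ of $\F^*$. The nontrivial direction of each identity uses \cref{def:ono:item6} to factor a $Q$-successor of a point through an $R$-successor followed by an $E_Q$-step. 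I expect the main bookkeeping burden to lie in these factorings and in confirming that $E_Q[U]$ is itself an $R$-upset, which follows either from the second identity above or directly from \cref{def:descriptive ms-frame:item4} of $\G$. Without \cref{def:ono:item6}, the second identity would fail in general, and the correspondence between the $\mipc$- and $\mgrz$-semantics for the quantifier modalities would collapse; thus this clause is precisely what underpins the modal companionship.
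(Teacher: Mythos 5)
Part (1) of your proposal is correct and is exactly how the paper treats it: the theorem is stated by citation only, and item (1) is just \cref{thm: MS4 modal comp of MIPC} unfolded. Your reduction of item (2) to the inclusion $\rho\mgrz\subseteq\mipc$ is also right. The gap is in your claim that $\G=(X,R,E_Q)$ is a descriptive $\ms$-frame. \cref{def:descriptive ms-frame} requires $E_Q$ to be continuous in the full modal sense: $E_Q[x]$ closed for every $x$ and $E_Q[U]$ clopen for \emph{every} clopen $U\subseteq X$, equivalently $\forall$ and $\exists$ must act on the whole Boolean algebra $\Clop(X)$. But the descriptive $\mipc$-frame structure only controls $Q$ (hence $E_Q$) on clopen \emph{$R$-upsets} --- that is what \cref{def:ono:item4} gives --- and the alternative $(X,R,E)$ presentation of \cite{Bez99} that you invoke likewise constrains $E$ only relative to the admissible sets of the intuitionistic signature, namely clopen $R$-upsets. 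Nothing guarantees that $E_Q[U]$ is clopen for an arbitrary clopen $U$. This is not a removable technicality: if your construction worked for every descriptive $\mipc$-frame, it would realize every monadic Heyting algebra as $\Op$ of an $\mgrz$-algebra, which the authors explicitly flag as an open problem at the end of \cref{sec:Op} (and which is the subject of \cite{BC24b}). A smaller second gap is the bare assertion that a partially ordered descriptive $\ms$-frame validates $\grz$; this is true but is Esakia's theorem, not an immediate consequence of partial orderedness, and needs justification outside the finite case.

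Both gaps close at once if you first invoke the finite model property of $\mipc$ (\cite{Bul66,FS78a}) and take $\F$ finite: the topology is then discrete, so $E_Q$ is trivially continuous and $(X,R,E_Q)$ is a finite descriptive $\ms$-frame, and a finite poset-based frame validates $\grz$ (the paper itself uses this in \cref{sec: failure BE}). With that repair the remainder of your argument is sound: the verification of \cref{def:descriptive ms-frame:item4} via \cref{def:ono:item6}, the identities $\bbox U = X\setminus Q^{-1}[X\setminus U]$ and $E_Q[U]=Q[U]$ for clopen $R$-upsets $U$, and the induction on $\psi$ are all correct, and are essentially the content of \cref{thm:Op and sk,thm:Op and translation}.
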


In \cref{sec: failure BE} we will show that $\tau$ and $\sigma$ are lattice homomorphisms, thus generalizing the corresponding results of Maksimova and Rybakov \cite{MR74eng} (see also \cite[Thm.~9.66]{CZ97}). 
On the other hand, we will prove that $\rho$ is neither a lattice homomorphism nor one-to-one. This yields that $\sigma$ is not an isomorphism, and hence the Blok--Esakia Theorem fails in the monadic setting.
For this we need a semantic characterization of $\rho$, which will be done 
in the next section.

\section{The functor $\Op$}\label{sec:Op}

Let $(B,\Box)$ be an $\sfour$-algebra. We recall that $a\in B$ is 
\emph{open} if $\Box a = a$. It is well known (see, e.g., \cite[Prop.~2.2.4]{Esa19}) that the set $H$ of open elements of $(B,\Box)$ is a bounded sublattice of $B$ which forms 
a Heyting algebra, where $a \to_H b = \Box(\neg a \vee b)$. Moreover, this correspondence extends to a functor $\Op \colon \sa \to \ha$ (see, e.g., \cite[Thm.~2.2.5]{Esa19}). We extend $\Op$ to the monadic setting.

\begin{definition}\label{def:Op(B)}
For an $\ms$-algebra $\mathfrak B = (B, \Box, \forall)$ let $\Op(\mathfrak B) = (H, \forall_H, \exists_H)$, where $H$ is the Heyting algebra of open elements of $(B,\Box)$, $\forall_H a = \bbox a$, and $\exists_H a = \exists a$ for all $a\in H$.
\end{definition}

\begin{theorem}\cite[Thm.~4]{FS77}
If $\mathfrak B$ is an $\ms$-algebra, then $\Op(\mathfrak{B})$ is a monadic Heyting algebra.
\end{theorem}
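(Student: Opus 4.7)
The plan is to verify two things: (i) that $\forall_H$ and $\exists_H$ are genuine operations on $H$, i.e.\ $\bbox a \in H$ and $\exists a \in H$ for every $a \in H$; and (ii) that $(H, \forall_H, \exists_H)$ satisfies the equational axioms of \cref{def:mha}. The Heyting-algebra structure of $H$ is already supplied by the functor $\Op \colon \sa \to \ha$ applied to the reduct $(B,\Box)$, so only the new operations need attention.

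For (i), the case of $\forall_H$ is immediate, since $\bbox a = \Box \forall a$ is in the image of $\Box$ and hence open. The nontrivial step is to show that $\exists$ preserves openness: given $\Box a = a$, we need $\Box \exists a = \exists a$. I would derive this from the left commutativity axiom applied with $p = \exists a$: $\Box \forall \exists a \leq \forall \Box \exists a$, and using the $\sfive$ identity $\forall \exists p = \exists p$ this reads $\Box \exists a \leq \forall \Box \exists a$. Since $\forall c \leq c$ in general, this forces the equality $\Box \exists a = \forall \Box \exists a$. In $\sfive$, fixed points of $\forall$ are also fixed points of $\exists$ (each implication being an easy consequence of $\forall \exists = \exists$ and $\exists \forall = \forall$), so $\exists \Box \exists a = \Box \exists a$. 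Combining with $a = \Box a \leq \Box \exists a$ (from $a \leq \exists a$ and monotonicity of $\Box$) yields $\exists a \leq \exists \Box \exists a = \Box \exists a$, which together with the trivial inequality $\Box \exists a \leq \exists a$ gives $\Box \exists a = \exists a$.

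Once (i) is in place, (ii) is largely bookkeeping. The $\sfour$-axioms for $\forall_H$ descend directly from $\bbox$ being an $\sfour$-modality on $B$ (see \cref{rem:equiv def ms4-alg}). The $\sfive$-axioms for $\exists_H$---normality over $\vee$, extensivity, idempotency, and the Halmos identity $\exists p \wedge \exists q \leq \exists(\exists p \wedge q)$---descend from $\exists$ being an $\sfive$-operator on $B$, using that meets and joins in $H$ agree with those in $B$. The two connecting axioms reduce to: $\exists_H \forall_H a = \exists \bbox a = \bbox a$, because $\bbox a$ is fixed by $\forall$ by \cref{rem:equiv def ms4-alg} and hence by $\exists$ in $\sfive$; and $\forall_H \exists_H a = \bbox \exists a = \Box \forall \exists a = \Box \exists a = \exists a$, invoking step (i) in the final equality.

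The only real obstacle is step (i). The inequality $\Box \exists a \geq \exists a$ for open $a$ is not formal: it genuinely uses the commutator axiom $\Box \forall p \to \forall \Box p$ together with the $\sfive$ character of $\forall$. Everything else follows once one recognises that $\bbox$ is an $\sfour$-modality, $\exists$ is an $\sfive$-operator, and $\bbox a$ lies in the common fixed-point set of $\forall$ and $\exists$.
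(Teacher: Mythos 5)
The paper does not prove this statement; it is quoted from Fischer Servi \cite[Thm.~4]{FS77}, so there is no in-paper argument to compare against. Your proof is correct and self-contained: the only genuinely nontrivial point is exactly the one you isolate, namely that $\exists$ preserves open elements, and your derivation of it is sound. Instantiating the commutativity axiom at $\exists a$ and using $\forall\exists a=\exists a$ gives $\Box\exists a\leq\forall\Box\exists a$, whence $\Box\exists a$ is a $\forall$-fixed (hence $\exists$-fixed) element; combining $a=\Box a\leq\Box\exists a$ with monotonicity of $\exists$ then yields $\exists a\leq\exists\Box\exists a=\Box\exists a$, and the reverse inequality is the T axiom. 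The remaining verifications correctly reduce to $\bbox$ being an $\sfour$-modality whose image lies in the common fixed-point set of $\forall$ and $\exists$ (as recorded in \cref{rem:equiv def ms4-alg}) and to the fact that meets, joins, and the order of $H$ are inherited from $B$, so the displayed axioms transfer verbatim. No gaps.
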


For each $\ms$-morphism $h \colon \mathfrak{B}_1 \to \mathfrak{B}_2$ let $\Op(h) \colon \Op(\mathfrak{B}_1) \to \Op(\mathfrak{B}_2)$ be its restriction. It is straightforward to see that $\Op(h)$ is an $\mha$-morphism and that \cite[Thm.~2.2.5]{Esa19} 
generalizes to the following: 

\begin{theorem} 
$\Op \colon \msa \to \mha$ is a functor.
\end{theorem}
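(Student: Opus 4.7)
The plan is to verify that the assignment $\mathfrak{B} \mapsto \Op(\mathfrak{B})$ on objects, combined with the restriction on morphisms, satisfies the three remaining functoriality requirements: well-definedness on arrows, preservation of $\mha$-structure (i.e.\ that restrictions are in fact $\mha$-morphisms), and preservation of identities and composition.

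First I would verify that $\Op(h)\colon \Op(\mathfrak{B}_1)\to\Op(\mathfrak{B}_2)$ is well defined. Let $h\colon\mathfrak{B}_1\to\mathfrak{B}_2$ be an $\msa$-morphism and let $H_i$ denote the open elements of $\mathfrak{B}_i$. If $a\in H_1$ then $\Box_1 a=a$, so $\Box_2 h(a)=h(\Box_1 a)=h(a)$, which means $h(a)\in H_2$. Therefore the set-theoretic restriction $\Op(h)\colon H_1\to H_2$ makes sense.

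Next I would show that $\Op(h)$ is an $\mha$-morphism. Since $h$ is in particular an $\sfour$-algebra morphism, the classical statement \cite[Thm.~2.2.5]{Esa19} already gives that $\Op(h)$ is a Heyting algebra homomorphism (it preserves $\wedge$, $\vee$, $0$, $1$ by restriction, and preserves $\to_H$ because $h$ commutes with $\Box$ and with the boolean operations). It then remains to check compatibility with the quantifier modalities. For $a\in H_1$, since $h$ is an $\msa$-morphism it commutes with $\Box$ and $\forall$, and hence with $\bbox=\Box\forall$; so
\[
\Op(h)(\forall_H a)=h(\bbox a)=\bbox h(a)=\forall_H \Op(h)(a).
\]
Likewise, because $h$ is a boolean homomorphism commuting with $\forall$, it commutes with $\exists=\neg\forall\neg$, giving
\[
\Op(h)(\exists_H a)=h(\exists a)=\exists h(a)=\exists_H \Op(h)(a).
\]
Thus $\Op(h)$ is an $\mha$-morphism, as required.

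Finally, preservation of identities and of composition is immediate from the fact that $\Op(h)$ is defined as the restriction of $h$ to open elements: the restriction of $\id_{\mathfrak{B}}$ to $H$ is $\id_{\Op(\mathfrak{B})}$, and the restriction of a composite is the composite of the restrictions. This completes the verification that $\Op\colon\msa\to\mha$ is a functor. There is no genuine obstacle here; the main point is simply the observation that $\bbox=\Box\forall$ is expressible from the $\msa$-signature, so preservation of the derived operation $\forall_H$ on open elements is automatic from the hypothesis that $h$ preserves $\Box$ and $\forall$.
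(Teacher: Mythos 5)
Your proof is correct and follows exactly the route the paper intends: the paper dismisses this as "straightforward," citing the classical $\sfour$-to-$\ha$ case, and your verification (openness is preserved since $h$ commutes with $\Box$, the restriction is a Heyting homomorphism by the classical result, and commutation with $\forall_H=\bbox$ and $\exists_H=\exists$ follows since both are composites of operations in the $\msa$-signature) is precisely the intended argument. No issues.
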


\begin{theorem}\cite[Thm.~5]{FS77}\label{thm:Op and translation}
Let $\varphi$ be a formula in the language $\Lae$ and $\mathfrak{B}$ an $\ms$-algebra. Then $\Op(\mathfrak{B}) \vDash \varphi$ iff $\mathfrak{B} \vDash \varphi^t$.
\end{theorem}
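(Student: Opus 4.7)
The plan is to prove the claim by induction on the complexity of $\varphi$, showing the stronger statement that for any valuation $v \colon \V \to B$ taking values in the Heyting algebra $H$ of open elements of $\mathfrak{B}$, the interpretation $\tilde v(\varphi^t)$ in $\mathfrak{B}$ coincides with the interpretation $\bar v(\varphi)$ in $\Op(\mathfrak{B})$, where $\bar v$ is the same assignment regarded as a valuation into $\Op(\mathfrak{B})$. Once this \emph{translation lemma} is established, the biconditional follows by a standard argument.

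First I would handle the base and propositional cases. For $\bot$ and for a propositional variable $p$ (where $v(p) \in H$), the equations $\tilde v(\bot^t) = 0 = \bar v(\bot)$ and $\tilde v(p^t) = \Box v(p) = v(p) = \bar v(p)$ are immediate. The connectives $\land$, $\lor$, $\to$ are the content of the classical (non-monadic) result behind \cite[Thm.~2.2.5]{Esa19}: meets and joins in $H$ agree with those in $B$, and the Heyting implication on $H$ is defined precisely as $a \to_H b = \Box(\neg a \vee b)$, which matches $(\varphi \to \psi)^t$. For the new monadic cases we use Definition~\ref{def:Op(B)} directly:
\[
\tilde v((\forall \varphi)^t) = \bbox \tilde v(\varphi^t) = \bbox \bar v(\varphi) = \forall_H \bar v(\varphi) = \bar v(\forall \varphi),
\]
and analogously
\[
\tilde v((\exists \varphi)^t) = \exists \tilde v(\varphi^t) = \exists \bar v(\varphi) = \exists_H \bar v(\varphi) = \bar v(\exists \varphi),
\]
using that by the cited \cite[Thm.~4]{FS77}, $\Op(\mathfrak{B})$ is a monadic Heyting algebra, so $\bbox a, \exists a \in H$ whenever $a \in H$, and these operations agree with $\forall_H, \exists_H$ by definition.

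With the translation lemma in hand, the two implications are straightforward. For the ``if'' direction, assume $\mathfrak{B} \vDash \varphi^t$ and pick any $\bar v \colon \V \to H$ to evaluate $\varphi$ in $\Op(\mathfrak{B})$; apply the lemma to its underlying $B$-valuation $v$, obtaining $\bar v(\varphi) = \tilde v(\varphi^t) = 1$. For the ``only if'' direction, assume $\Op(\mathfrak{B}) \vDash \varphi$ and let $v \colon \V \to B$ be an arbitrary valuation; define $v'(p) = \Box v(p) \in H$. Since every propositional variable $p$ occurs in $\varphi^t$ only inside a box (the clause $p^t = \Box p$), a simple side induction shows $\tilde v(\varphi^t) = \tilde{v'}(\varphi^t)$; by the translation lemma this equals $\bar{v'}(\varphi) = 1$.

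The main obstacle is purely bookkeeping: one must carefully distinguish the three interpretations of a formula (namely $\varphi$ read in $\Op(\mathfrak{B})$ via $\bar v$, $\varphi^t$ read in $\mathfrak{B}$ via $\tilde v$, and $\varphi^t$ read in $\mathfrak{B}$ via the induced $v'$) and check that $\exists$, whose behavior on open elements is not immediate, really restricts to $\exists_H$ on $H$. This last point is where one cites \cite[Thm.~4]{FS77} rather than verifying anew that $\exists$ sends open elements to open elements; with that fact granted, the remaining steps reduce to the well-understood Heyting/$\mathsf{S4}$ case supplemented by the two clauses for $\forall$ and $\exists$.
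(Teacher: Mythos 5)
Your proposal is correct. The paper does not prove this statement at all --- it is quoted from Fischer Servi \cite[Thm.~5]{FS77} --- and your argument is the standard proof of that result: a translation lemma by induction on $\varphi$ (with the propositional cases reducing to the classical Heyting/$\sfour$ situation and the two new clauses matching $\bbox$ with $\forall_H$ and $\exists$ with $\exists_H$ by \cref{def:Op(B)}), followed by the usual replacement of an arbitrary valuation $v$ by $v'(p)=\Box v(p)$ in the ``only if'' direction. The one point needing care, that $\exists$ restricts to an operation on open elements, is correctly discharged by citing \cite[Thm.~4]{FS77}, exactly as the paper itself does.
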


In order to see how $\rho$ relates to $\Op$, 
we need to describe the behavior of  
$\Op$ with respect 
to the class operators $\H$, $\S$, and $\P$ of taking homomorphic images, subalgebras, and products.

Let $(B,\Box)$ be an $\sfour$-algebra and 
$H$ the Heyting algebra of its open elements. 
We recall that a filter $G$ of $B$ is a {\em $\Box$-filter} if $a\in G$ implies $\Box a\in G$. There is a well-known isomorphism
between the congruences of $(B,\Box)$, the $\Box$-filters of $(B,\Box)$, the filters of $H$, and the congruences of $H$ (see, e.g., \cite[Sec.~2.4]{Esa19}). This readily generalizes to the monadic setting. 

\begin{definition}
A filter $F$ of a monadic Heyting algebra $\mathfrak{A} = (H,\forall,\exists)$ is \emph{monadic} if $a\in F$ implies $\forall a\in F$; and a $\Box$-filter $G$ of an $\ms$-algebra $\mathfrak{B}=(B, \Box, \forall)$ is {\em monadic} if it satisfies the same condition (equivalently, $G$ is a monadic filter provided $a \in G$ implies $\bbox a \in G$). 
\end{definition}

\begin{theorem}\label{thm:correspondence congruences filters}
Let $\mathfrak B$ be an $\ms$-algebra and $\mathfrak A=\Op(\mathfrak B)$. There is an isomorphism between the posets of congruences of $\mathfrak B$, monadic filters of $\mathfrak B$, monadic filters of $\mathfrak A$, and congruences of $\mathfrak A$. 
\end{theorem}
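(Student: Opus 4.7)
The plan is to obtain the desired fourfold isomorphism as the composition of three poset bijections, each of which generalizes a classical correspondence to the monadic setting.

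First I would handle the two ``internal'' correspondences, neither of which requires new ideas. For the $\ms$-algebra $\mathfrak{B}$, congruences correspond to monadic $\Box$-filters via $\theta \mapsto [1]_\theta$ and $G \mapsto \theta_G$, where $\theta_G = \{(a,b) : (a\leftrightarrow b) \in G\}$. The $\Box$-filter half is the usual BAO fact, and the $\forall$ half is proved identically: a congruence respects $\forall$ iff its $1$-class is closed under $\forall$. For the monadic Heyting algebra $\mathfrak{A}$, the analogous correspondence between congruences and monadic filters is \cite[Sec.~3]{Bez98} (already implicit in the Heyting case), so I may cite it.

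The substance of the proof is the third correspondence: monadic $\Box$-filters of $\mathfrak{B}$ are in bijection with monadic filters of $\mathfrak{A}$. The maps are
\[
G \longmapsto G \cap H, \qquad F \longmapsto {\uparrow_B}F,
\]
where ${\uparrow_B}F$ denotes the filter of $B$ generated by $F$. I would verify well-definedness and inversehood by the following short computations. If $G$ is a monadic $\Box$-filter and $a \in G \cap H$, then $\forall_H a = \bbox a \in G$ because $G$ is both a $\Box$-filter and monadic, and $\bbox a \in H$ because $\Box\bbox a = \bbox a$ by \cref{rem:equiv def ms4-alg}; thus $G \cap H$ is a monadic filter of $\mathfrak{A}$. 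Conversely, if $F$ is a monadic filter of $\mathfrak{A}$ and $a \in {\uparrow_B}F$, choose $f \in F$ with $f \leq a$; then $f = \Box f \leq \Box a$ gives $\Box a \in {\uparrow_B}F$, and $\forall_H f = \bbox f \leq \bbox a \leq \forall a$ with $\forall_H f \in F$ gives $\forall a \in {\uparrow_B}F$. Finally the maps are mutually inverse: for any monadic $\Box$-filter $G$, every $a \in G$ satisfies $\Box a \in G \cap H$ and $\Box a \leq a$, so $a \in {\uparrow_B}(G\cap H)$; and for any $F$, if $a \in {\uparrow_B}F \cap H$ then $a \geq f \in F$ with $a \in H$, so already $a \in F$.

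All three bijections are manifestly monotone with respect to inclusion (for filters) and set-theoretic inclusion of congruence relations, so they are order isomorphisms; composing them gives the statement. The only place needing care, and the one I would check first, is that the generated filter ${\uparrow_B}F$ actually closes under both $\Box$ and $\forall$; this is precisely where the openness of the elements of $F$ (giving $\Box f = f$) and the monadicity of $F$ in $\mathfrak{A}$ (giving $\bbox f = \forall_H f \in F$) are used in tandem, and it is the only step that genuinely distinguishes the monadic setting from the classical $\sfour$/$\ha$ correspondence recalled at the start of \cref{sec:Op}.
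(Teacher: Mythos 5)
Your proposal is correct and follows essentially the same route as the paper: the same three-step decomposition (congruences of $\mathfrak B$ $\leftrightarrow$ monadic $\Box$-filters of $\mathfrak B$, congruences of $\mathfrak A$ $\leftrightarrow$ monadic filters of $\mathfrak A$, and the bridge $G \mapsto G \cap H$, $F \mapsto {\uparrow_B}F$), with the paper citing the first two and asserting that the third ``directly generalizes'' the classical $\Box$-filter/filter correspondence. Your verifications of well-definedness and mutual inverseness for the bridge are exactly the details the paper leaves implicit, and they check out.
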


\begin{proof}[Sketch of proof.]
That the poset of congruences of $\mathfrak A$ is isomorphic to the poset of monadic filters of $\mathfrak A$ follows from \cite[Thm.~2.7]{Bez98b}. A similar argument yields that the poset of congruences of $\mathfrak B$ is isomorphic to the poset of monadic filters of $\mathfrak B$ (see, e.g., \cite[Thm.~3.4]{BM24}).
Finally, the isomorphism between the posets of monadic filters of $\mathfrak B$ and $\mathfrak A$ directly generalizes the well-known isomorphism between the $\Box$-filters of $\mathfrak B$ and the filters of $\mathfrak A$ obtained by 
\[
G \mapsto G \cap \mathfrak A \ \mbox{ and } \ F \mapsto {\uparrow}_{\mathfrak B} F,
\] 
where
${\uparrow}_{\mathfrak B} F = \{b \in B : b \ge a \text{ for some } a \in F\}$.
\end{proof}

As an immediate corollary, we obtain the following correspondence between subdirectly irreducible algebras (for all notions of universal algebra we refer to \cite{BS81}).

\begin{corollary}\label{cor:B sub irr iff O(B) sub irr}
$\B$ is a subdirectly irreducible $\ms$-algebra iff $\Op(\B)$ is a subdirectly irreducible monadic Heyting algebra.
\end{corollary}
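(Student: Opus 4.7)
The plan is to deduce this as an immediate consequence of \cref{thm:correspondence congruences filters}. Recall the standard universal-algebraic characterization: a non-trivial algebra $A$ is subdirectly irreducible iff its congruence lattice has a least non-identity element, i.e.\ the identity congruence $\Delta_A$ is completely meet-irreducible in $\func{Con}(A)$ (see \cite{BS81}). This property is entirely a property of the order structure of the congruence lattice, since the bottom element is characterized as the minimum.

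By \cref{thm:correspondence congruences filters}, there is a poset isomorphism $\Phi\colon \func{Con}(\B)\to \func{Con}(\Op(\B))$. Any order isomorphism between bounded posets preserves the bottom element, so $\Phi(\Delta_\B)=\Delta_{\Op(\B)}$, and hence $\Phi$ restricts to an order isomorphism between the non-identity congruences on the two sides. Therefore $\func{Con}(\B)$ has a least non-identity element iff $\func{Con}(\Op(\B))$ does, which is precisely the equivalence claimed.

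The only small thing to check is the degenerate case of one-element algebras, which is handled separately in the SI definition. Here one notes that the Heyting reduct of the one-element $\ms$-algebra is the one-element Heyting algebra, and conversely, if $\Op(\B)$ is trivial then $0=1$ in $\Op(\B)\subseteq B$, forcing $\B$ to be trivial as well. Thus the two algebras are trivial together, and the corollary follows. I expect no real obstacle beyond being clear about this bookkeeping, since the substantive work has already been done in \cref{thm:correspondence congruences filters}.
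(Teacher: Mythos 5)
Your argument is correct and is exactly what the paper intends: it states the corollary as an immediate consequence of \cref{thm:correspondence congruences filters}, relying on the same observation that subdirect irreducibility is an order-theoretic property of the congruence lattice preserved by the poset isomorphism. Your extra care with the trivial-algebra case is harmless bookkeeping that the paper leaves implicit.
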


Consequently, both 
$\msa$ and $\mha$ are congruence-distributive 
and have the congruence extension property (CEP). This will be used in what follows, and so will be the next lemma. 

\begin{lemma}\plabel{lem:quotients and op}
Let $\B$ be an $\ms$-algebra.
\begin{enumerate}
\item\label[lem:quotients and op]{lem:quotients and op:item1} If $G$ is a monadic filter of $\mathfrak{B}$, then $\Op(\mathfrak{B}/G) \cong \Op(\mathfrak{B})/(G \cap \Op(\mathfrak{B}))$.
\item\label[lem:quotients and op]{lem:quotients and op:item2} If $F$ is a monadic filter of $\Op(\mathfrak{B})$, then $\Op(\mathfrak{B})/F \cong \Op(\mathfrak{B}/\up_{\mathfrak B} F)$.
\end{enumerate}
\end{lemma}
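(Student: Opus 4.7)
The plan is to prove (1) directly and then deduce (2) from (1) by means of the filter-to-congruence correspondence established in \cref{thm:correspondence congruences filters}. For (1), the isomorphism will be induced by restricting the quotient map $\pi\colon\B\to\B/G$ to the subalgebra of open elements.

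First I would verify that $G\cap\Op(\B)$ is a monadic filter of $\Op(\B)$: it is clearly a filter of the underlying Heyting algebra, and since $G$ is closed under both $\Box$ and $\forall$, for any $a\in G\cap\Op(\B)$ the element $\bbox a=\forall_H a$ lies in $G$ and is open, hence in $G\cap\Op(\B)$. Next, $\pi$ restricts to a map $\bar\pi\colon\Op(\B)\to\Op(\B/G)$: if $a$ is open in $\B$, then $\Box_{\B/G}\pi(a)=\pi(\Box a)=\pi(a)$, so $\pi(a)$ is open in $\B/G$. This restriction is clearly an $\mha$-morphism, since $\pi$ commutes with $\Box$, $\forall$, and $\exists$, hence with $\forall_H=\bbox$ and $\exists_H=\exists$. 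For surjectivity of $\bar\pi$, given any open $[b]\in\Op(\B/G)$, the element $\Box b\in\Op(\B)$ satisfies $\bar\pi(\Box b)=[\Box b]=\Box_{\B/G}[b]=[b]$. For the kernel, note that $\bar\pi(a)=\bar\pi(a')$ iff $a\leftrightarrow_B a'\in G$; the key point is that for open $a,a'$, we have $a\to_B a'\in G$ iff $\Box(a\to_B a')=a\to_H a'\in G$ (one direction uses $\Box x\le x$ and upward closure of $G$, the other uses that $G$ is a $\Box$-filter). Therefore $\ker\bar\pi$ corresponds under \cref{thm:correspondence congruences filters} to the monadic filter $G\cap\Op(\B)$, and the Homomorphism Theorem yields the desired isomorphism.

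For (2), I would set $G=\up_{\B}F$. By \cref{thm:correspondence congruences filters}, $G$ is a monadic filter of $\B$ and $G\cap\Op(\B)=F$, so applying (1) gives $\Op(\B/\up_{\B}F)\cong\Op(\B)/F$, as required. The only step requiring any real care is the kernel computation in (1)---specifically the passage between $a\to_B a'$ and $a\to_H a'$ for open $a,a'$---and this is precisely where the closure of $G$ under $\Box$, which is built into the definition of a monadic $\Box$-filter, is used.
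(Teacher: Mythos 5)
Your proposal is correct and follows essentially the same route as the paper: restrict the quotient map $\pi\colon\B\to\B/G$ to open elements, identify its kernel with $G\cap\Op(\B)$, invoke the isomorphism theorem, and then derive (2) from (1) via $F=\up_{\B}F\cap\Op(\B)$. The only difference is that you spell out the surjectivity of the restricted map (via $\Box b$) and the kernel computation (via $\Box(a\to_B a')=a\to_H a'$), steps the paper leaves implicit; both are correct.
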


\begin{proof}
\eqref{lem:quotients and op:item1}.
Let $\pi \colon \mathfrak{B} \to \mathfrak{B}/G$ be the quotient map.
Then $\Op(\pi) \colon \Op(\mathfrak{B}) \to \Op(\mathfrak{B}/G)$ is an onto $\mha$-morphism. 
Moreover, for $b \in \Op(\mathfrak{B})$, we have 
\[
\Op(\pi)(b)=1 \iff \pi(b)=1 \iff b \in G.
\] 
Thus, $G \cap \Op(\mathfrak{B})$ is the kernel of $\Op(\pi)$, and hence the First Isomorphism Theorem \cite[Thm.~II.6.12]{BS81} 
implies that $\Op(\mathfrak{B}/G) \cong \Op(\mathfrak{B})/(G \cap \Op(\mathfrak{B}))$.

\eqref{lem:quotients and op:item2}.
Since $F=\up_{\mathfrak B} F \cap \Op(\mathfrak{B})$, 
\eqref{lem:quotients and op:item1} implies that 
\[
\Op(\mathfrak{B})/F = \Op(\mathfrak{B})/(\up_{\mathfrak B} F \cap \Op(\mathfrak{B})) \cong \Op(\mathfrak{B}/\up_{\mathfrak B} F).\qedhere
\]
\end{proof}

For a class of algebras $\K$,
we denote by $\H(\K)$, $\S(\K)$, and $\P(\K)$, the classes of homomorphic images, subalgebras, and products of algebras from $\K$.
Let $\V(\K)$ be the variety generated by $\K$.
It is well known (see, e.g., \cite[Thm.~II.9.5]{BS81}) that $\V(\K) = \H\S\P(\K)$. When $\K$ is a class of $\ms$-algebras, we write $\Op(\K)$ for $\{ \Op(\mathfrak{B}) : \mathfrak{B} \in \K \}$.

\begin{proposition}\label{prop:HSP and Op}
For a class $\K$ of $\ms$-algebras, we have 
$\Op\H(\K)=\H\Op(\K)$, $\Op\P(\K)=\P\Op(\K)$, and 
$\Op\S(\K) \subseteq \S\Op(\K)$.
\end{proposition}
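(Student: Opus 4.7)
The plan is to handle the three claims separately, with \cref{lem:quotients and op} doing almost all the work for the $\H$ case, and direct structural arguments for $\P$ and $\S$.

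For the equality $\Op\P(\K)=\P\Op(\K)$, I will establish the stronger isomorphism $\Op\!\left(\prod_{i\in I}\B_i\right)\cong\prod_{i\in I}\Op(\B_i)$ for any family of $\ms$-algebras. Since the boolean operations, $\Box$, $\forall$ (and hence $\exists$ and $\bbox$) in the product are computed componentwise, an element $(b_i)_{i\in I}$ is open precisely when each $b_i$ is open in $\B_i$. The induced map $(b_i)\mapsto (b_i)$ is therefore a bijection between $\Op\!\left(\prod_i\B_i\right)$ and $\prod_i\Op(\B_i)$, and it respects Heyting meets, joins, and implication (the latter because $a\to_H b=\Box(\neg a\vee b)$ is a formula in operations preserved componentwise), as well as $\forall_H=\bbox$ and $\exists_H=\exists$. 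Both inclusions follow at once.

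For $\Op\H(\K)=\H\Op(\K)$, I will use both parts of \cref{lem:quotients and op}. For $\Op\H(\K)\subseteq\H\Op(\K)$, take $\B'\in\H(\K)$, so $\B'\cong\B/G$ with $\B\in\K$ and $G$ a monadic filter of $\B$; then \cref{lem:quotients and op:item1} gives $\Op(\B')\cong\Op(\B)/(G\cap\Op(\B))\in\H\Op(\K)$. For $\H\Op(\K)\subseteq\Op\H(\K)$, take a quotient $\Op(\B)/F$ with $\B\in\K$ and $F$ a monadic filter of $\Op(\B)$; then \cref{lem:quotients and op:item2} gives $\Op(\B)/F\cong\Op(\B/{\up_\B F})\in\Op\H(\K)$, since $\up_\B F$ is a monadic filter of $\B$ by \cref{thm:correspondence congruences filters}.

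For $\Op\S(\K)\subseteq\S\Op(\K)$, I will show that any $\msa$-embedding $\iota\colon\B'\hookrightarrow\B$ restricts to an $\mha$-embedding $\Op(\B')\hookrightarrow\Op(\B)$. If $a\in\Op(\B')$, then $\Box a=a$ in $\B'$, so $\Box\iota(a)=\iota(\Box a)=\iota(a)$ in $\B$, i.e.\ $\iota(a)\in\Op(\B)$. The restriction is injective, and preserves the Heyting operations (meet and join are boolean, and $\iota(a\to_H b)=\iota\Box(\neg a\vee b)=\Box(\neg\iota(a)\vee\iota(b))=\iota(a)\to_H\iota(b)$), as well as $\forall_H=\bbox=\Box\forall$ and $\exists_H=\exists$, since $\iota$ commutes with $\Box$, $\forall$, and $\exists$.

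The substantive content is really only in the $\S$ case, and even there it is routine; the crucial point, foreshadowed in the introduction and exploited in \cref{sec: failure BE}, is that the inclusion $\Op\S(\K)\subseteq\S\Op(\K)$ is \emph{not} an equality in the monadic setting. I will not attempt to prove equality here, nor should one expect to: a subalgebra of $\Op(\B)$ need not be closed under a choice of preimages witnessing $\forall_H=\Box\forall$ inside $\B$, and this is precisely where the analogy with the non-monadic case breaks down.
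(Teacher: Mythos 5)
Your proof is correct and follows essentially the same route as the paper: both inclusions for $\H$ via \cref{lem:quotients and op}, the componentwise computation of the operations for $\P$, and the observation that an $\msa$-embedding restricts to an $\mha$-embedding of the open elements for $\S$. The only difference is that you spell out details the paper delegates to a citation of the non-monadic case, and your closing remark about the failure of the reverse inclusion for $\S$ matches what the paper records separately in \cref{rem: SO neq OS}.
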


\begin{proof}
First, let $\mathfrak{A} \in \Op\H(\K)$, so $\A =\Op(\mathfrak{B}')$ for a homomorphic image $\mathfrak{B}'$ of some $\mathfrak{B} \in \K$. Then $\mathfrak{B}' \cong \mathfrak{B}/G$ for some monadic $\Box$-filter $G$ of $\mathfrak{B}$. By \cref{lem:quotients and op:item1}, 
\[
\Op(\mathfrak{B}') \cong \Op(\mathfrak{B}/G) \cong \Op(\mathfrak{B})/(G \cap \Op(\mathfrak{B})),
\]
and so $\Op(\mathfrak{B}')$ is a homomorphic image of $\Op(\mathfrak{B})$. Thus, $\A \in \H\Op(\K)$, and so $\Op\H(\K) \subseteq \H\Op(\K)$. For the other inclusion, let $\mathfrak{A} \in \H\Op(\K)$, so $\A$ is a homomorphic image of $\Op(\mathfrak{B})$ with $\mathfrak{B} \in \K$. Then $\mathfrak{A} \cong \Op(\mathfrak{B})/F$ for some monadic filter $F$ of $\Op(\mathfrak{B})$. By \cref{lem:quotients and op:item2}, 
\[
\mathfrak{A} \cong \Op(\mathfrak{B})/F \cong \Op(\mathfrak{B}/\up_{\mathfrak B} F),
\]
and hence $\mathfrak{A} = \Op(\mathfrak{B}')$ for some $\B' \cong \mathfrak{B}/\up_{\mathfrak B} F$. 
Thus, $\A \in \Op\H(\K)$, and so $\H\Op(\K) \subseteq \Op\H(\K)$.

Next, let $\{ \mathfrak{B}_i : i \in I \}$ be 
a family of $\ms$-algebras. By \cite[Thm.~2.2.5]{Esa19}, $\Op(\Pi_{i \in I}\mathfrak{B}_i)=\Pi_{i \in I} \Op(\mathfrak{B}_i)$ as Heyting algebras. 
Moreover, since $\bbox$ and $\exists$ are componentwise, the two algebras are equal as monadic Heyting algebras. Thus, $\Op\P(\K)=\P\Op(\K)$.

Finally, let $\mathfrak{A} \in \Op\S(\K)$, so $\A = \Op(\B')$ for a subalgebra $\mathfrak{B}'$ of some $\mathfrak{B} \in \K$. Then $\Op(\mathfrak{B}')$ is a Heyting subalgebra of $\Op(\mathfrak{B})$ (see, e.g., \cite[Thm.~2.2.5]{Esa19}). Moreover,
$\bbox a, \exists a \in \mathfrak{B}'$ for each $a \in \Op(\mathfrak{B}')$. Therefore, $\Op(\mathfrak{B}')$ is also a monadic subalgebra of $\Op(\mathfrak{B})$. Thus, $\A \in \S\Op(\K)$, and so $\Op\S(\K)\subseteq\S\Op(\K)$.
\end{proof}

\begin{remark} \label{rem: SO neq OS}
In \cref{thm:SO not well behaved:item2} we will see that the inclusion $\S\Op(\K) \subseteq \Op\S(\K)$ does not hold in general.
This is in contrast with the functor $\Op \colon \ha \to \sa$ which is well known to commute with $\H$, $\S$, and $\P$, and hence maps varieties of $\sfour$-algebras to varieties of Heyting algebras (see, e.g., \cite[Cor.~2.2.6]{Esa19}).
\end{remark}

\begin{proposition}\plabel{prop:SOpV=VOp}
\hfill\begin{enumerate}
\item\label[prop:SOpV=VOp]{prop:SOpV=VOp:item1} If $\K$ is a class of $\ms$-algebras, then $\V(\Op(\K)) = \S\Op(\V(\K))$.
\item\label[prop:SOpV=VOp]{prop:SOpV=VOp:item2} If $\mathbb{V}$ is a variety of $\ms$-algebras, then $\S\Op(\mathbb{V})$ is the variety generated by $\Op(\mathbb{V})$. 
\end{enumerate}
\end{proposition}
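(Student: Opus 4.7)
The plan is to reduce everything to class-operator juggling using the three identities/inclusions from \cref{prop:HSP and Op} ($\Op\H=\H\Op$, $\Op\P=\P\Op$, and $\Op\S\subseteq\S\Op$), together with the congruence extension property (CEP) for $\mha$. The CEP for $\mha$ follows from \cref{thm:correspondence congruences filters} and \cref{cor:B sub irr iff O(B) sub irr} (as noted just after the corollary). A standard universal-algebra argument then yields $\H\S(\mathbb{L})\subseteq\S\H(\mathbb{L})$ for every class $\mathbb{L}$ of monadic Heyting algebras: if $\A_0\subseteq\A\in\mathbb{L}$ and $\theta$ is a congruence on $\A_0$, then by CEP there is a congruence $\bar\theta$ on $\A$ with $\bar\theta\cap\A_0^2=\theta$, so the composition $\A_0\hookrightarrow\A\twoheadrightarrow\A/\bar\theta$ has kernel $\theta$ and witnesses $\A_0/\theta\in\S\H(\mathbb{L})$.

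For \eqref{prop:SOpV=VOp:item1} I would establish $\V(\Op(\K))=\S\Op\V(\K)$ by a double inclusion. For $\V(\Op(\K))\subseteq\S\Op\V(\K)$, start from
\[
\V(\Op(\K)) = \H\S\P\Op(\K)=\H\S\Op\P(\K),
\]
using $\P\Op=\Op\P$; then apply the CEP-consequence $\H\S\subseteq\S\H$ to obtain $\S\H\Op\P(\K)=\S\Op\H\P(\K)$ via $\H\Op=\Op\H$; finally absorb $\H\P$ into $\V=\H\S\P$ to land in $\S\Op\V(\K)$. For the reverse inclusion, first slide $\Op$ rightward through $\V$: starting from $\Op\V(\K)=\Op\H\S\P(\K)$, successive applications of $\Op\H=\H\Op$, $\Op\S\subseteq\S\Op$, and $\Op\P=\P\Op$ give $\Op\V(\K)\subseteq\H\S\P\Op(\K)=\V(\Op(\K))$. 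Applying $\S$ to both sides and using that the variety $\V(\Op(\K))$ is already closed under $\S$ yields $\S\Op\V(\K)\subseteq\V(\Op(\K))$.

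Part \eqref{prop:SOpV=VOp:item2} is then immediate: when $\mathbb{V}$ is itself a variety, $\V(\mathbb{V})=\mathbb{V}$, so \eqref{prop:SOpV=VOp:item1} gives $\V(\Op(\mathbb{V}))=\S\Op(\mathbb{V})$, which is exactly the assertion that $\S\Op(\mathbb{V})$ is the variety generated by $\Op(\mathbb{V})$. The only nontrivial step is the appeal to CEP at the single place where $\H$ needs to be pushed past $\S$; everything else is bookkeeping with the identities of \cref{prop:HSP and Op}. In particular, the asymmetric inclusion $\Op\S\subseteq\S\Op$ (rather than an equality) causes no trouble because it appears only in the direction in which the operator chain is allowed to grow; the reverse direction is handled by CEP instead.
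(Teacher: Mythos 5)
Your proof is correct and follows essentially the same route as the paper: both arguments are pure class-operator bookkeeping using the commutation facts of \cref{prop:HSP and Op} together with the CEP for $\mha$ to interchange $\H$ and $\S$. The only organizational difference is that the paper runs a single chain of equalities (absorbing the extra $\S$ via the identity $\S\Op\S(\K')=\S\Op(\K')$), whereas you split the argument into two inclusions; this changes nothing of substance.
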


\begin{proof}
\eqref{prop:SOpV=VOp:item1}. By \cref{prop:HSP and Op}, $\H$ and $\Op$ commute. Since $\mha$ has the CEP, $\H$ and $\S$ commute on subclasses of $\mha$ by \cite[p.~62]{BS81}. Therefore,
\begin{align*}
\S\Op(\V(\K)) = \S\Op(\H\S\P(\K)) = \S\H\Op(\S\P(\K)) = \H\S\Op(\S\P(\K)).
\end{align*}
If $\K'$ is a class of $\ms$-algebras, then \cref{prop:HSP and Op} implies that $\S\Op(\S(\K')) \subseteq \S\Op(\K')$, and hence $\S\Op(\S(\K')) = \S\Op(\K')$. Thus,
\begin{align*}
\H\S\Op(\S\P(\K)) = \H\S\Op(\P(\K)) = \H\S\P\Op(\K) = \V(\Op(\K))
\end{align*}
because $\P$ and $\Op$ commute by \cref{prop:HSP and Op}.
Consequently, $\S\Op(\V(\K)) = \V(\Op(\K))$.

\eqref{prop:SOpV=VOp:item2}. 
By \eqref{prop:SOpV=VOp:item1}, $\S\Op(\mathbb{V}) = \S\Op(\V(\mathbb{V})) =  \V(\Op(\mathbb{V}))$.
\end{proof}

\begin{theorem}\plabel{thm:int frag SrhoV}
\hfill\begin{enumerate}
\item\label[thm:int frag SrhoV]{thm:int frag SrhoV:item1} Let $\M\in\Lambda(\ms)$. Then $\Alg(\rho \M) = \S\Op(\Alg(\M))$.
\item\label[thm:int frag SrhoV]{thm:int frag SrhoV:item2} Let $\L\in\Lambda(\mipc)$. Then $\M\in\Lambda(\ms)$ is a modal companion of $\L$ iff $\Alg(\L) = \S\Op(\Alg(\M))$.
\item\label[thm:int frag SrhoV]{thm:int frag SrhoV:item3} $\S\Op$ commutes with arbitrary joins of varieties of $\ms$-algebras.
\end{enumerate}
\end{theorem}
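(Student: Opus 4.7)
The plan is to prove the three parts in order, using algebraic completeness and the already-established behavior of $\Op$ under class operators.

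For \cref{thm:int frag SrhoV:item1}, I would show that $\S\Op(\Alg(\M))$ and $\Alg(\rho\M)$ are varieties of monadic Heyting algebras with the same logic, and then invoke the dual isomorphism \cref{thm: lattice iso} between $\Lambda(\mipc)$ and $\Lambda(\mha)$ to conclude that they coincide. That $\S\Op(\Alg(\M))$ is a variety is exactly \cref{prop:SOpV=VOp:item2}; that $\Alg(\rho\M)$ is one is immediate from the definition. The crucial input is \cref{thm:Op and translation}: $\Op(\B)\vDash\varphi$ iff $\B\vDash\varphi^t$. In one direction, if $\varphi\in\rho\M$, then $\M\vdash\varphi^t$, so every $\B\in\Alg(\M)$ satisfies $\B\vDash\varphi^t$ and hence $\Op(\B)\vDash\varphi$; since validity of $\Lae$-formulas passes to subalgebras, every member of $\S\Op(\Alg(\M))$ validates $\varphi$. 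Conversely, if $\varphi\notin\rho\M$, then $\M\not\vdash\varphi^t$, and algebraic completeness (\cref{thm: lattice iso ms:item1} together with \cref{thm: lattice iso ms:item2}) furnishes a $\B\in\Alg(\M)$ that refutes $\varphi^t$, so $\Op(\B)\in\S\Op(\Alg(\M))$ refutes $\varphi$. This gives $\Log(\S\Op(\Alg(\M)))=\rho\M$, and applying $\Alg$ yields the desired equality.

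Part \cref{thm:int frag SrhoV:item2} is then a direct corollary: by definition, $\M$ is a modal companion of $\L$ iff $\L=\rho\M$, which by \cref{thm: lattice iso} is equivalent to $\Alg(\L)=\Alg(\rho\M)$, and the right-hand side equals $\S\Op(\Alg(\M))$ by \cref{thm:int frag SrhoV:item1}. For \cref{thm:int frag SrhoV:item3}, given a family $\{\mathbb{V}_i\}_{i\in I}$ of subvarieties of $\msa$, whose join in $\Lambda(\msa)$ is $\bigvee_i\mathbb{V}_i=\V(\bigcup_i\mathbb{V}_i)$, I would chain \cref{prop:SOpV=VOp:item1,prop:SOpV=VOp:item2}:
\[
\S\Op\Bigl(\bigvee_i\mathbb{V}_i\Bigr)=\V\bigl(\Op(\textstyle\bigcup_i\mathbb{V}_i)\bigr)=\V\bigl(\textstyle\bigcup_i\Op(\mathbb{V}_i)\bigr)=\bigvee_i\V(\Op(\mathbb{V}_i))=\bigvee_i\S\Op(\mathbb{V}_i).
\]
An alternative route combines \cref{thm:int frag SrhoV:item1} with the fact that $\rho$ preserves arbitrary meets (\cref{prop:rho tau sigma adjoints:item3}), translating meets in $\Lambda(\ms)$ and $\Lambda(\mipc)$ into joins in $\Lambda(\msa)$ and $\Lambda(\mha)$ via \cref{thm: lattice iso ms:item2,thm: lattice iso}.

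I do not foresee a serious obstacle, as each step amounts to chaining the lemmas of \cref{sec:Op}. The points requiring a little care are to invoke completeness in the correct direction in \cref{thm:int frag SrhoV:item1} (to guarantee the refuting algebra lies in $\Alg(\M)$, not merely in $\msa$), and to recognize in \cref{thm:int frag SrhoV:item3} that the lattice-theoretic join in $\Lambda(\msa)$ coincides with variety generation applied to the union. The appearance of $\S$ in $\S\Op$ rather than plain $\Op$ is essential and reflects the failure, flagged in \cref{rem: SO neq OS}, of $\Op$ to commute with $\S$; without it one would replace $\S\Op(\Alg(\M))$ by $\Op(\Alg(\M))$ and obtain a class that need not be a variety.
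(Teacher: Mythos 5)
Your proposal is correct. Parts \cref{thm:int frag SrhoV:item1} and \cref{thm:int frag SrhoV:item2} match the paper's argument: the paper likewise chains algebraic completeness with \cref{thm:Op and translation} to get $\rho\M = \Log(\Op(\Alg(\M)))$ and then applies \cref{prop:SOpV=VOp:item1} to identify $\V(\Op(\Alg(\M)))$ with $\S\Op(\Alg(\M))$; your phrasing via ``two varieties with the same logic'' plus the dual isomorphism of \cref{thm: lattice iso} is the same computation written through the Galois connection. Where you genuinely diverge is \cref{thm:int frag SrhoV:item3}: you stay entirely on the algebraic side, writing the join as $\V(\bigcup_i\mathbb{V}_i)$ and pushing $\V$ through $\Op$ twice via \cref{prop:SOpV=VOp:item1,prop:SOpV=VOp:item2}, whereas the paper transfers the join to a meet of logics via \cref{thm: lattice iso ms:item2}, uses that $\rho$ preserves arbitrary meets (\cref{prop:rho tau sigma adjoints:item3}, i.e.\ the adjunction of \cref{prop:rho tau sigma adjoints:item1}), and then translates back with part \cref{thm:int frag SrhoV:item1}. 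Your route is shorter and avoids the detour through $\Lambda(\ms)$ and $\Lambda(\mipc)$, at the cost of not displaying the conceptual reason ($\S\Op$ is the algebraic incarnation of the right adjoint $\rho$) that the paper's proof makes explicit; you correctly identify the paper's argument as the ``alternative route.'' Your closing remarks about the direction of completeness and about the necessity of the $\S$ in $\S\Op$ (cf.\ \cref{rem: SO neq OS}) are also on point.
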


\begin{proof}
\eqref{thm:int frag SrhoV:item1}. By algebraic completeness and \cref{thm:Op and translation}, for each formula $\varphi$ in $\Lae$, we have 
$\M \vdash \varphi^t$ iff $\Alg(\M) \vDash \varphi^t$ iff $\Op(\Alg(\M)) \vDash \varphi$. 
Therefore, $\rho\M\vdash\varphi$ iff $\Op(\Alg(\M)) \vDash \varphi$.
Thus, by \cref{prop:SOpV=VOp:item1},
\[
\Alg(\rho\M)
=\V(\Op(\Alg(\M)))=\S\Op(\Alg(\M)).
\] 
 
\eqref{thm:int frag SrhoV:item2}.
$\M$ is a modal companion of $\L$ iff $\L=\rho\M$, which is equivalent to $\Alg(\L)=\Alg(\rho \M)$. By \eqref{thm:int frag SrhoV:item1}, this is equivalent to $\Alg(\L) = \S\Op(\Alg(\M))$.

\eqref{thm:int frag SrhoV:item3}. Let $\{ \mathbb{V}_i : i \in I\}$ be a family of varieties of $\ms$-algebras. Since meets in $\Lambda(\mipc)$ and $\Lambda(\ms)$ are intersections, by \eqref{thm:int frag SrhoV:item1} and \cref{prop:rho tau sigma adjoints:item3} we have
\begin{align*}
\S \Op \left(\bigvee \{ \mathbb{V}_i : i \in I\} \right) & = \S \Op \left(\bigvee \{ \Alg(\Log(\mathbb{V}_i)) : i \in I\} \right) = \S \Op \Alg\left(\bigcap \{ \Log(\mathbb{V}_i) : i \in I\} \right) \\
& = \Alg\left(\rho \bigcap \{ \Log(\mathbb{V}_i) : i \in I\} \right) = \Alg\left(\bigcap \{ \rho\Log(\mathbb{V}_i) : i \in I\} \right)\\
& = \bigvee \{ \Alg(\rho\Log(\mathbb{V}_i)) : i \in I\} = \bigvee \{ \S\Op(\Alg(\Log(\mathbb{V}_i))) : i \in I\}\\
& = \bigvee \{ \S\Op(\mathbb{V}_i) : i \in I\}.\qedhere
\end{align*}
\end{proof}

We conclude this section by describing a functor from the category of descriptive $\ms$-frames to the category of descriptive $\mipc$-frames that is dual 
to $\Op$. For this we generalize the notion of the {\em skeleton} of an $\sf S4$-frame (see, e.g., \cite[p.~68]{CZ97}) to the monadic setting.

\begin{definition}\label{def:skeleton}
For a descriptive $\ms$-frame $\G=(Y,R,E)$, define $\sk(\G)=(X,R',Q')$ as follows. Let $X\coloneqq Y/E_R$ be the quotient of $Y$ by the equivalence relation $E_R$ on $Y$ induced by $R$, and let $\pi \colon Y \to X$ be the quotient map. Define $R'$ on $X$ by 
\[
\pi(x) R' \pi(y) \iff x R y.
\] 
Also, let $Q$ be the composite $E \circ R$, and define $Q'$ on $X$ by 
\[
\pi(x) Q' \pi(y) \iff x Q y.
\]
\end{definition}

\begin{theorem}\label{thm:sk(G) is descriptive}
$\sk(\G)$ is a descriptive $\mipc$-frame.
\end{theorem}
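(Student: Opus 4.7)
The plan is to verify, in order, the six conditions in \cref{def:ono} for the triple $\sk(\G)=(X,R',Q')$, treating the Stone topology and the partial order $R'$ first, then the quasi-order $Q'$, and finally the three interaction conditions \crefrange{def:ono:item4}{def:ono:item6}.

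First I would establish that $X=Y/E_R$ is a Stone space and that $R'$ is a well-defined continuous partial order. Transitivity of $R'$ follows from transitivity of $R$, antisymmetry is built into the definition of $E_R$, and well-definedness on the quotient is routine. The topological part is the monadic analogue of the classical skeleton construction for $\sfour$-Esakia spaces: the key observation is that $E_R$-saturations of clopen $R$-upsets are clopen, from which one derives that the quotient map $\pi\colon Y\to X$ is a closed continuous surjection, that $X$ is Stone, and that continuity of $R'$ transfers from continuity of $R$ through $\pi$.

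Second I would turn to $Q'$. The central point is that $Q=E\circ R$ is $E_R$-compatible, so $Q'$ is well-defined: given $xE_Rx'$ and $yE_Ry'$ and a witness $w$ for $xQy$ (so $xRw$ and $wEy$), transitivity of $R$ yields $x'Rw$, and the commutativity axiom \cref{def:descriptive ms-frame:item4} applied to $wEy$ and $yRy'$ produces a $u$ with $wRu$ and $uEy'$, giving $x'Qy'$. The same axiom, inserted in the middle of an $xQy$, $yQz$ chain, gives transitivity of $Q'$; reflexivity is immediate from reflexivity of $R$ and $E$. For continuity of $Q'$, I would use the identities $Q[x]=E[R[x]]$ (closed, as $R[x]$ is closed and $E$ sends closed sets to closed sets) and $Q^{-1}[\pi^{-1}[U]]=R^{-1}[E[\pi^{-1}[U]]]$ (clopen for clopen $U\subseteq X$), combined with the fact that clopen $E_R$-saturated subsets of $Y$ push down to clopens in $X$.

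Finally, for the last three conditions: \cref{def:ono:item5} is immediate, since $xRy$ and $yEy$ give $xQy$. For \cref{def:ono:item6}, given $\pi(x)Q'\pi(y)$ witnessed on $Y$ by some $w$ with $xRw$ and $wEy$, the class $\pi(w)$ serves as the required $z$: $wQy$ is witnessed by $w$ itself, and $yQw$ by $y$ (using symmetry of $E$), yielding $wE_Qy$ and hence $\pi(w)E_{Q'}\pi(y)$. For \cref{def:ono:item4}, writing $V=\pi^{-1}[U]$ for a clopen $R'$-upset $U$, the identity $Q[V]=E[R[V]]=E[V]$ holds because $V$ is an $R$-upset; a last application of \cref{def:descriptive ms-frame:item4} shows that $E[V]$ is again an $R$-upset, and continuity of $E$ makes it clopen, so $Q'[U]=\pi[E[V]]$ is a clopen $R'$-upset. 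The main obstacle is the topological groundwork in the first paragraph, namely showing that $X$ is Stone and that $\pi$ interacts well with clopen $R$-upsets; once that is in hand, all remaining verifications proceed by essentially diagrammatic applications of \cref{def:descriptive ms-frame:item4} together with transitivity of $R$ and symmetry of $E$.
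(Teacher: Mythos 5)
Your proposal is correct and follows essentially the same route as the paper: the paper delegates the Stone-space/$R'$ part to \cite[Lem.~3.4.13]{Esa19} and the well-definedness of the quasi-order $Q'$ together with \cref{def:ono:item5} and \cref{def:ono:item6} to \cite[Lem.~3.3]{BBI23}, then verifies continuity of $Q'$ and \cref{def:ono:item4} exactly as you do, via $\pi^{-1}[Q'[A]]=Q[\pi^{-1}[A]]$ and $Q[\pi^{-1}[U]]=E[R[\pi^{-1}[U]]]=E[\pi^{-1}[U]]$. The only difference is that you spell out the diagram chases the paper cites from the literature, so there is nothing to change.
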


\begin{proof}
By \cite[Lem.~3.4.13]{Esa19}, $X$ is a Stone space and $R'$ is a continuous relation on $X$. In addition, the quotient map $\pi \colon Y \to X$ is continuous and satisfies $R'[\pi(x)]=\pi[R[x]]$ for each $x \in Y$. By \cite[Lem.~3.3]{BBI23}, $Q'$ is a well-defined quasi-order satisfying \crefrange{def:ono:item5}{def:ono:item6}.
We show that $Q'$ is a continuous relation. Since both $R$ and $E$ are continuous relations on $Y$, so is $Q$. 
For $A \subseteq X$, it is straightforward to see that 
\[
\pi^{-1}[Q'[A]]=Q[\pi^{-1}[A]] \mbox{ and } \pi^{-1}[(Q')^{-1}[A]]=Q^{-1}[\pi^{-1}[A]].
\] 
Let $x \in Y$. Since 
\[
\pi^{-1}[Q'[\pi(x)]]=Q[\pi^{-1}[\pi(x)]]=Q[E_R[x]] = Q[x]
\]
and $Q$ is continuous, it follows from the definition of the quotient topology that
$Q'[\pi(x)]$ is closed in $X$. Let $U \subseteq X$ be clopen. Then 
\[
\pi^{-1}[(Q')^{-1}[U]]=Q^{-1}[\pi^{-1}[U]],
\]
which is clopen in $Y$ because $Q$ is continuous and $\pi^{-1}[U]$ is clopen in $Y$. Thus, $(Q')^{-1}[U]$ is clopen in $X$, and hence $Q'$ is continuous.

It is left to verify \cref{def:ono:item4}. Let $U$ be a clopen $R'$-upset of $X$. Then $\pi^{-1}[U]$ is a clopen $R$-upset of $Y$. 
Since $E$ is a continuous equivalence relation,
\[
\pi^{-1}[Q'[U]]=Q[\pi^{-1}[U]]=ER[\pi^{-1}[U]]=E[\pi^{-1}[U]] = E^{-1}[\pi^{-1}[U]]
\]
is clopen. Thus, $Q'[U]$ is clopen, and hence $\sk(\G)$ is a descriptive $\mipc$-frame. 
\end{proof}

As with descriptive $\sf S4$-frames, if $f \colon \G_1 \to \G_2$ is a $\msfrm$-morphism, we define the map $\sk(f) \colon \sk(\G_1) \to \sk(\G_2)$ by 
\[
\sk(f)(\pi_1(x))=\pi_2(f(x))
\]
for each $x \in \G_1$, where $\pi_1,\pi_2$ are the corresponding quotient maps.

\begin{lemma}
$\sk \colon \msfrm \to \mipcfrm$ is a well-defined functor.
\end{lemma}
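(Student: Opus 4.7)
The plan is to verify three things: (i) $\sk(f)$ is well-defined as a function between the underlying sets, (ii) it satisfies the four conditions of \cref{def:mipcfrm-morphisms}, and (iii) $\sk$ respects identities and composition. Item (iii) is immediate from the defining formula $\sk(f)(\pi_1(x)) = \pi_2(f(x))$, and for item (i) I would observe that if $\pi_1(x)=\pi_1(x')$ then $x E_{R_1} x'$, and since $f$ is a p-morphism with respect to $R$ it preserves $R$-relatedness in both directions, so $f(x) E_{R_2} f(x')$, hence $\pi_2(f(x))=\pi_2(f(x'))$. The bulk of the work therefore lies in item (ii).

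For (ii), continuity of $\sk(f)$ follows from the universal property of the quotient topology on $X_1 = Y_1/E_{R_1}$ applied to the identity $\pi_2\circ f = \sk(f)\circ\pi_1$. The p-morphism condition with respect to $R'$ I would get by combining the identity $R_2'[\pi_2(y)] = \pi_2[R_2[y]]$ (established in the proof of \cref{thm:sk(G) is descriptive}) with the p-morphism condition on $f$ with respect to $R$. The analogous condition for $Q'$ follows the same recipe, after unfolding $Q = E\circ R$ and invoking the p-morphism conditions on both $R$ and $E$ in turn.

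The main obstacle will be the weak p-morphism condition with respect to $(Q')^{-1}$, i.e., the non-trivial direction of \cref{def:mipcfrm-morphisms:item4}. Suppose $\pi_2(w)\, Q'_2\, \sk(f)(\pi_1(x))$, equivalently $w\, Q_2\, f(x)$. Unfolding $Q_2 = E_2\circ R_2$ gives $v\in Y_2$ with $w R_2 v$ and $v E_2 f(x)$. The p-morphism condition on $E$ then provides $u\in Y_1$ with $v=f(u)$ and $u E_1 x$. Reflexivity of $R_1$ combined with $u E_1 x$ yields $u Q_1 x$, hence $\pi_1(u)\, Q'_1\, \pi_1(x)$; and $w R_2 f(u)$ yields $\pi_2(w)\, R'_2\, \sk(f)(\pi_1(u))$. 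So $\pi_1(u)$ is the required witness, which completes the verification.
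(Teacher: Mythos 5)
Your proposal is correct and follows essentially the same route as the paper: well-definedness from preservation of $E_{R_1}$, continuity via the quotient topology and $\pi_2\circ f=\sk(f)\circ\pi_1$, the $R'$- and $Q'$-p-morphism conditions by transporting the corresponding conditions on $f$ through the quotient maps, and the weak p-morphism condition for $(Q')^{-1}$ by unfolding $Q=E\circ R$, using the $E$-p-morphism property and reflexivity of $R_1$. The only cosmetic difference is that you verify the last condition by an element chase, whereas the paper establishes the set identity $Q_2^{-1}[f(x)]=R_2^{-1}f[Q_1^{-1}[x]]$ and then pushes it through $\pi_2$; the underlying argument is identical.
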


\begin{proof}
By~\cref{thm:sk(G) is descriptive}, $\sk$ is well defined on objects.
Let $f \colon \G_1 \to \G_2$ be a $\msfrm$-morphism with $\G_1=(Y_1,R_1,E_1)$ and $\G_2=(Y_2,R_2,E_2)$. It is well known that $\sk(f)$ is a well-defined continuous p-morphism with respect to $R'$. For the reader's convenience, we sketch a proof. Since $f[R_1[z]] \subseteq R_2[f(z)]$ for each $z \in Y_1$, we have that $xE_{R_1}y$ implies $f(x)E_{R_2}f(y)$ for each $x,y \in Y_1$. Thus, $\sk(f)$ is a well-defined function. It is continuous because $\pi_2 \circ f = \sk(f) \circ \pi_1$ and $\pi_2 \circ f$ is continuous.
For $x \in Y_1$ we have 
\begin{align*}
\sk(f)[R_1'[\pi_1(x)]] &= \sk(f)[\pi_1[R_1[x]]]=\pi_2[f[R_1[x]]] = \pi_2[R_2[f(x)]]\\
& = R_2'[\pi_2[f(x)]]=R_2'[\sk(f)(\pi_1(x))],
\end{align*}
where the first and fourth equalities follow from the definitions of $R_1'$ and $R_2'$, the second and last equalities from the definition of $\sk(f)$, and the third equality holds because $f$ is a p-morphism with respect to $R$. Thus, $\sk(f)$ is a p-morphism with respect to $R'$. 

Since $Q_1=E_1 \circ R_1$ and $f$ is a p-morphism with respect to $R$ and $E$, it is also a p-morphism with respect to $Q$. Then a similar chain of equalities yields that $\sk(f)$ is a p-morphism with respect to $Q'$. 

We show that $\sk(f)$ is a weak p-morphism with respect to $(Q')^{-1}$. Let $x \in Y_1$. Then
\begin{equation}\label{eq:dagger}
Q_2^{-1}[f(x)]=R_2^{-1} E_2[f(x)]=R_2^{-1} f[E_1[x]] = R_2^{-1}[f[R_1^{-1}E_1[x]]] = R_2^{-1} f[Q_1^{-1}[x]],
\tag{$\dagger$}
\end{equation}
where the first and last equalities follow from the definitions of $Q_2$ and $Q_1$, the second equality holds because $f$ is a p-morphism with respect to $E$, the left to right inclusion in the third equality is a consequence of the reflexivity of $R_1$, and the right to left inclusion holds because $f$ preserves $R_1$ and $R_2$ is transitive.
Consequently,
\begin{align*}
(Q_2')^{-1}[\sk(f)(\pi_1[x])] &= (Q_2')^{-1}[\pi_2(f(x))] = \pi_2 [Q_2^{-1}[f(x)]]\\
& = \pi_2 [R_2^{-1}[f[Q_1^{-1}[x]]]] = (R_2')^{-1}[\pi_2[f[Q_1^{-1}[x]]]]\\
& = (R_2')^{-1}[\sk(f)[\pi_1[Q_1^{-1}[x]]]] =(R_2')^{-1}[\sk(f)[(Q_1')^{-1}[\pi_1[x]]]],
\end{align*}
where the first and fifth equalities follow from the definition of $\sk(f)$, the second, fourth, and last equalities are consequences of the definitions of $Q_2'$, $R_2'$, and $Q_1'$, and the third equality follows from \eqref{eq:dagger}. Thus, $\sk(f)$ is an $\mipc$-morphism (see \cref{def:mipcfrm-morphisms}).
That $\rho$ preserves compositions and identities is an immediate consequence of its definition. Therefore, $\sk \colon \msfrm \to \mipcfrm$ is a well-defined functor.
\end{proof}

\begin{example}\label{ex:sk(f) not p-morph wrt E}
It is not true in general that if $f$ is a $\msfrm$-morphism, then $\sk(f)$ is a p\nobreakdash-morphism with respect to $E_{Q'}$. To see this, consider $\mathfrak{H}_1=(Y_1,R_1,E_1)$ and $\mathfrak{H}_2=(Y_2,R_2,E_2)$ depicted in \cref{fig:sk(f) not p-morph wrt E}(a). 
The black arrows 
represent the quasi-orders $R_i$, the double black arrows the $E_{R_i}$-equivalence classes, and the red circles 
the $E_i$-equivalence classes for $i=1,2$.\footnote{These should be understood as Hasse diagrams of the corresponding frames. For example, $aR_1b$ and $bR_1d$, so $aR_1d$, but the arrow from $a$ to $d$ is not drawn.} 
\begin{figure}[ht]
\begin{tikzpicture}[-{Latex[width=1mm]}]
\coordinate (1BL) at (-5,0);
\coordinate (1BR) at (-3,0);
\coordinate (1TL) at (-5,2);
\coordinate (1TR) at (-3,2);
\fill (1BL) circle(2pt);
\fill (1BR) circle(2pt);
\fill (1TL) circle(2pt);
\fill (1TR) circle(2pt);
\draw (1BL) -- (1TL);
\draw (1BR) -- (1TR);
\draw[{Latex[width=1mm]}-{Latex[width=1mm]}] (1TL) -- (1TR);
\clustertwo{1BL}{1TL}{1.35}{1.2};
\clusterone{1BR}{1.2};
\clusterone{1TR}{1.2};
\node at ([shift={(0:0.6)}]1BL) {$a$};
\node at ([shift={(0:0.6)}]1BR) {$c$};
\node at ([shift={(30:0.6)}]1TL) {$b$};
\node at ([shift={(30:0.6)}]1TR) {$d$};
\node at (-4,-1) {$\mathfrak{H}_1$};
\coordinate (2BL) at (2,0);
\coordinate (2TR) at (4,2);
\coordinate (2TL) at (2,2);
\fill (2BL) circle(2pt);
\fill (2TR) circle(2pt);
\fill (2TL) circle(2pt);
\draw (2BL) -- (2TL);
\draw[{Latex[width=1mm]}-{Latex[width=1mm]}] (2TL) -- (2TR);
\clustertwo{2BL}{2TL}{1.35}{1.2};
\clusterone{2TR}{1.2};
\node at ([shift={(0:0.6)}]2BL) {$u$};
\node at ([shift={(30:0.6)}]2TL) {$v$};
\node at ([shift={(30:0.6)}]2TR) {$w$};
\node at (3,-1) {$\mathfrak{H}_2$};
\node at (-0.5,-1.5) {(a)};
\coordinate (3BL) at (-5,-5);
\coordinate (3BR) at (-3,-5);
\coordinate (3T) at (-4,-3);
\fill (3BL) circle(2pt);
\fill (3BR) circle(2pt);
\fill (3T) circle(2pt);
\draw (3BL) -- (3T);
\draw (3BR) -- (3T);
\clustertwo{3BL}{3T}{1.3}{1.25};
\clusterone{3BR}{1.2};
\node at ([shift={(-90:0.6)}]3BL) {$\pi_1(a)$};
\node at ([shift={(90:0.5)}]3T) {$\pi_1(b)=\pi_1(d)$};
\node at ([shift={(-90:0.6)}]3BR) {$\pi_1(c)$};
\node at (-4,-6.5) {$\sk(\mathfrak{H}_1)$};
\coordinate (4B) at (3,-5);
\coordinate (4T) at (3,-3);
\fill (4B) circle(2pt);
\fill (4T) circle(2pt);
\draw (4B) -- (4T);
\clustertwo{4B}{4T}{1.35}{1.2};
\node at ([shift={(-90:0.6)}]4B) {$\pi_2(u)$};
\node at ([shift={(90:0.6)}]4T) {$\pi_2(v)=\pi_2(w)$};
\node at (3,-6.5) {$\sk(\mathfrak{H}_2)$};
\node at (-0.5,-7) {(b)};
\end{tikzpicture}
\caption{The counterexample from \cref{ex:sk(f) not p-morph wrt E}.}\label{fig:sk(f) not p-morph wrt E}
\end{figure}

We show that $\mathfrak{H}_1$ is a descriptive $\ms$-frame. Clearly $R_1$ is a quasi-order and $E_1$ is an equivalence relation. We show that ${R_1E_1[x] \subseteq E_1R_1[x]}$ for each $x \in Y_1$. If $x=c$ or $x=d$, then $E_1[x]=\{x\}$. Therefore, $R_1E_1[x]=R_1[x] \subseteq E_1R_1[x]$. On the other hand, if $x=a$ or $x=b$, then $E_1[x]=\{ a, b\}$ and $R_1[x]=\{a,b,d\}$ or $R_1[x]=\{b,d\}$. Thus, $R_1E_1[x]=\{a,b,d\} \subseteq E_1R_1[x]$. 
Since $\mathfrak{H}_1$ is finite, its topology is discrete, and hence $\mathfrak{H}_1$ is a descriptive $\ms$-frame. The proof that $\mathfrak{H}_2$ is a descriptive $\ms$-frame is similar.

\cref{fig:sk(f) not p-morph wrt E}(b) depicts the skeletons $\sk(\mathfrak{H}_1)$ and $\sk(\mathfrak{H}_2)$, where the black arrows represent the partial orders $R_i'$ and the red circles the $E_{Q_i'}$-equivalence classes for $i=1,2$.
Define ${f\colon\mathfrak{H}_1\to\mathfrak{H}_2}$ by 
\[
f(a)=u, \ f(b)=v, \mbox{ and } f(c)=f(d)=w.
\]
It is straightforward to check that $f$ is a $\msfrm$-morphism. However, $\sk(f)\colon\sk(\mathfrak{H}_1)\to\sk(\mathfrak{H}_2)$ is not a p-morphism with respect to $E_{Q'}$ because $\sk(f)(\pi_1(c))=\pi_2(w)$ and $\pi_2(w) E_{Q_2'} \pi_2(u)$, but $E_{Q_1'}[\pi_1(c)] = \{\pi_1(c)\}$, so there is no $x \in E_{Q_1'}[\pi_1(c)]$ such that $\sk(f)(x)=\pi_2(u)$.
\end{example}

We are ready to prove that $\sk \colon \msfrm \to \mipcfrm$ is dual to $\Op \colon \msa \to \mha$.

\begin{theorem}\label{thm:Op and sk}
The following diagram commutes up to natural isomorphism.
\[
\begin{tikzcd}[sep=huge]
\msa \arrow[d, "\Op"'] \arrow[r, shift left=1, "(-)_*"] & \msfrm \arrow[d, "\sk"] \arrow[l, shift left=1, "(-)^*"]  \\
\mha \arrow[r, shift left=1, "(-)_*"] & \mipcfrm \arrow[l, shift left=1, "(-)^*"]
\end{tikzcd}
\]
\end{theorem}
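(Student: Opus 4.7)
The plan is to construct a natural isomorphism $\eta_\G \colon (\sk(\G))^* \to \Op(\G^*)$ for each descriptive $\ms$-frame $\G = (Y, R, E)$; commutativity involving the functors $(-)_*$ then follows formally from the dual equivalences of \cref{thm: duality for MHA,thm: duality for ms4}. Fixing $\G$, let $\pi \colon Y \to X$ be the quotient map to $X = Y/E_R$, and set $\eta_\G(U) = \pi^{-1}[U]$ for each clopen $R'$-upset $U$ of $X$.

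First I would verify that $\eta_\G$ is a bijection onto the underlying Heyting algebra of $\Op(\G^*)$. By the $\sfour$-case of Esakia duality, the open elements of $(\Clop(Y), \Box)$ are precisely the clopen $R$-upsets of $Y$. Since $E_R \subseteq R$ is symmetric, every $R$-upset is $E_R$-saturated, so standard properties of the quotient topology yield that $U \mapsto \pi^{-1}[U]$ is a bijection between clopen $R'$-upsets of $X$ and clopen $R$-upsets of $Y$. Preservation of the Heyting structure is the usual Esakia-dual computation.

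The main step is preservation of the monadic operations. Write $V = \pi^{-1}[U]$; since $V$ is an $R$-upset and $R$ is reflexive, $R[V] = V$. Using $Q = E \circ R$ and the identity $\pi^{-1}[Q'[A]] = Q[\pi^{-1}[A]]$ established in the proof of \cref{thm:sk(G) is descriptive}, we obtain
\[
\eta_\G(\exists_{\sk(\G)^*} U) = \pi^{-1}[Q'[U]] = Q[V] = E[R[V]] = E[V] = \exists V = \exists_{\Op(\G^*)} \eta_\G(U),
\]
where the penultimate equality is the computation $\neg\forall\neg V = E[V]$ in the classical $\sfour$-algebra $\G^*$. For $\forall$, the symmetry of $E$ gives $Q^{-1} = (E \circ R)^{-1} = R^{-1} \circ E$, and the analogous identity $\pi^{-1}[(Q')^{-1}[A]] = Q^{-1}[\pi^{-1}[A]]$ yields
\[
\eta_\G(\forall_{\sk(\G)^*} U) = Y \setminus Q^{-1}[Y \setminus V] = Y \setminus R^{-1}E[Y \setminus V] = \Box \forall V = \bbox V = \forall_{\Op(\G^*)} \eta_\G(U).
\]

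For naturality, given a $\msfrm$-morphism $f \colon \G_1 \to \G_2$, the defining identity $\pi_2 \circ f = \sk(f) \circ \pi_1$ at once gives
\[
\Op(f^*)(\eta_{\G_2}(U)) = f^{-1}[\pi_2^{-1}[U]] = \pi_1^{-1}[\sk(f)^{-1}[U]] = \eta_{\G_1}(\sk(f)^*(U))
\]
for every clopen $R_2'$-upset $U$, which is exactly the required naturality square. The main obstacle in the argument is purely bookkeeping with composed relations — in particular correctly unfolding $Q^{-1}$ as $R^{-1} \circ E$ in the $\forall$-computation — since all remaining ingredients are standard dualities or immediate from the definitions of $\sk$ and the quotient $\pi$.
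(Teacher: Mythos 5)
Your proposal is correct and follows essentially the same route as the paper: the natural isomorphism is the same map $U \mapsto \pi^{-1}[U]$, reduced to the classical Esakia-duality fact for the Heyting reduct, with the same unfoldings $Q[\pi^{-1}[U]] = E[R[\pi^{-1}[U]]] = E[\pi^{-1}[U]]$ and $Q^{-1} = R^{-1}\circ E$ for the monadic operations, and the same one-line naturality check via $\pi_2 \circ f = \sk(f)\circ \pi_1$.
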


\begin{proof}
It is enough to prove that $(-)^* \circ \sk$ is naturally isomorphic to $\Op \circ (-)^*$. Let $\G=(Y,R,E)$ be a descriptive $\ms$-frame, $\sk(\G)=(X,R',Q')$ its skeleton, and $\pi\colon Y \to X$ the quotient
map.
Clearly, if $U$ is a clopen $R$-upset of $\sk(\G)$, then $\pi^{-1}[U]$ is a clopen $R$-upset of $\G$. In fact, $\pi^{-1} \colon \sk(\G)^* \to \Op(\G^*)$ is an isomorphism of Heyting algebras (see, e.g., \cite[Prop.~3.4.15]{Esa19}).
It remains to show that $\pi^{-1}$ commutes with $\forall$ and $\exists$. To simplify notation, let $H = \Op(\G^*)$. For $U \in \sk(\G)^*$, 
it follows from \cref{def:Op(B)} and the proof of \cref{thm: duality for ms4} that
\begin{align*}
\forall_H (\pi^{-1}[U]) &= Y \setminus R^{-1}[Y \setminus [Y \setminus E[Y \setminus \pi^{-1}[U]]]] = Y \setminus R^{-1}E[Y \setminus \pi^{-1}[U]]\\
 &= 
 Y \setminus Q^{-1}[Y \setminus \pi^{-1}[U]].
\end{align*}
Thus, by \cref{rem:ClopUp and Pf},
\begin{align*}
\pi^{-1}[\forall U] &= \pi^{-1}[X \setminus (Q')^{-1}[X \setminus U]] = Y \setminus \pi^{-1}[(Q')^{-1}[X \setminus U]]\\
&= Y \setminus Q^{-1}[\pi^{-1}[X \setminus U]] = Y \setminus Q^{-1}[Y \setminus \pi^{-1}[U]] = \forall_H (\pi^{-1}[U]).
\end{align*}
\cref{def:Op(B)}, \cref{rem:ClopUp and Pf}, and the proof of \cref{thm: duality for ms4} also yield 
\[
\exists U = Q'[U]  
\ \mbox{ and } \ \exists_H (\pi^{-1}[U]) = E[\pi^{-1}[U]].
\] 
Therefore,
\begin{align*}
\pi^{-1}[\exists U] = \pi^{-1}[Q'[U]] = Q[\pi^{-1}[U]] = ER[\pi^{-1}[U]] = E[\pi^{-1}[U]] = \exists_H (\pi^{-1}[U]),
\end{align*}
where the fourth equality follows from the fact that $\pi^{-1}[U]$ is an $R$-upset. Consequently, $\pi^{-1}$ is an $\mha$-isomorphism.

To see the naturality of this isomorphism, 
we need to show that $\pi_1^{-1} \circ \sk(f)^{-1} = f^{-1} \circ \pi_2^{-1}$ for every $\msfrm$-morphism $f \colon \G_1 \to \G_2$. For each $U \in \sk(\G_2)^*$ and $x \in Y_1$, the definition of $\rho(f)$ yields
\[
x \in \pi_1^{-1}[\sk(f)^{-1}[U]] \iff \sk(f)(\pi_1(x)) \in U \iff \pi_2(f(x)) \in U \iff x \in f^{-1}[\pi_2^{-1}[U]],
\]
and hence $\pi_1^{-1} \circ \sk(f)^{-1}$ and $f^{-1} \circ \pi_2^{-1}$ coincide on $U$.
\end{proof}

Obtaining an algebraic insight of $\tau$ and $\sigma$ requires investigating the realizability of monadic Heyting algebras as the algebras of open elements of $\ms$-algebras. Here the situation is more complicated because unlike the classic case of Heyting algebras (see, e.g., \cite[Sec.~IV.3]{RS63}),
it remains open whether each monadic Heyting algebra can be realized this way.
We will discuss this in detail in a forthcoming paper. 

\section{Failure of Blok--Esakia for $\mipc$} \label{sec: failure BE}

In this final section we show that $\tau$ and $\sigma$ are lattice homomorphisms, thus generalizing the result of Maksimova and Rybakov \cite{MR74eng} (see also \cite[Thm.~9.66]{CZ97}) to the monadic setting. On the other hand, we show that $\rho$ is neither a lattice homomorphism nor one-to-one. From this we derive our main result, that $\sigma$ is not an isomorphism, and hence that the Blok--Esakia Theorem does not extend to the monadic setting.

\begin{lemma}\plabel{lem:property subdir irr}
\hfill\begin{enumerate}
\item\label[lem:property subdir irr]{lem:property subdir irr:item1} If $\A\in\mha$ is subdirectly irreducible, then $\forall a_1 \vee \forall a_2 =1$ implies $a_1=1$ or $a_2=1$ for any $a_1,a_2 \in A$.
\item\label[lem:property subdir irr]{lem:property subdir irr:item2} If $\B\in\msa$ is subdirectly irreducible, then $\bbox b_1 \vee \bbox b_2 =1$ implies $b_1=1$ or $b_2=1$ for any $b_1,b_2 \in B$.
\end{enumerate}
\end{lemma}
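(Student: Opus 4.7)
The plan is to exploit the standard characterization of subdirect irreducibility in terms of the existence of a smallest nontrivial congruence, together with the isomorphism between congruences and monadic filters established in \cref{thm:correspondence congruences filters} (and its analogue for $\msa$). Both parts are then parallel, so I describe them together.

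First I would identify the relevant ``monolith filter''. An algebra is subdirectly irreducible iff its congruence lattice has a smallest element strictly above the identity. Translating through the congruence/monadic-filter correspondence, this says that there is a smallest monadic filter $F_0 \supsetneq \{1\}$ of $\A$ in case \eqref{lem:property subdir irr:item1}, and a smallest monadic $\Box$-filter $G_0 \supsetneq \{1\}$ of $\B$ in case \eqref{lem:property subdir irr:item2}. By minimality, $F_0$ is contained in \emph{every} nontrivial monadic filter of $\A$, and similarly for $G_0$.

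Next I would compute the monadic filter generated by a single element. In a monadic Heyting algebra, since $\forall a \leq a$, $\forall \forall a = \forall a$, and $\forall$ is order-preserving, the set $\up \forall a$ is a filter closed under $\forall$ and hence is the smallest monadic filter containing $a$. Analogously, using that $\bbox$ is an $\sfour$-operator (\cref{rem:equiv def ms4-alg}), the smallest monadic $\Box$-filter of $\B$ containing $b$ is $\up \bbox b$.

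For \eqref{lem:property subdir irr:item1}, suppose for contradiction that $\forall a_1 \vee \forall a_2 = 1$ while $a_1 \neq 1$ and $a_2 \neq 1$. Since $\forall a_i \leq a_i$, we have $\forall a_i \neq 1$, so the filters $\up \forall a_1$ and $\up \forall a_2$ are both nontrivial monadic filters, whence each contains $F_0$. But in any lattice $\up c \cap \up d = \up(c \vee d)$, so
\[
F_0 \subseteq {\up \forall a_1} \cap {\up \forall a_2} = {\up (\forall a_1 \vee \forall a_2)} = {\up 1} = \{1\},
\]
contradicting $F_0 \supsetneq \{1\}$. The argument for \eqref{lem:property subdir irr:item2} is identical, using $\bbox b_i \leq b_i$ to conclude $\bbox b_i \neq 1$, and then intersecting $\up \bbox b_1$ with $\up \bbox b_2$ to contradict minimality of $G_0$.

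There is no real obstacle here; the content is in the congruence-filter correspondence (\cref{thm:correspondence congruences filters}) and the observation that the monadic filter generated by a single element is a principal upset. The only point requiring a moment of care is checking that $\up \forall a$ and $\up \bbox b$ are genuinely closed under $\forall$ and under $\bbox$ respectively, which follows from idempotence of these operators on their images.
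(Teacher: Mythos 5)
Your proof is correct. Note, though, that the paper does not actually argue this lemma: it simply cites \cite[Thm.~2.11]{Bez98b}, which is a characterization of subdirectly irreducible monadic Heyting algebras (essentially the existence of an ``opremum'', an element $c\neq 1$ with $\forall a\le c$ for all $a\neq 1$), and declares the $\msa$ case similar. Your route is a self-contained substitute: you pass through the monolith via the congruence/monadic-filter correspondence, observe that the monadic filter generated by $a$ is the principal upset ${\up}\forall a$ (respectively ${\up}\bbox b$, using idempotence and monotonicity of $\forall$ and $\bbox$ together with $\Box\bbox b=\forall\bbox b=\bbox b$ to get closure under both operators), and then use ${\up}c\cap{\up}d={\up}(c\vee d)$ to contradict minimality of the monolith filter. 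This is the same underlying idea as the cited theorem, just unpacked; what it buys is independence from the external reference. One pedantic remark: \cref{thm:correspondence congruences filters} is stated for algebras of the form $\Op(\B)$, so for an arbitrary subdirectly irreducible $\A\in\mha$ you are really invoking the congruence/monadic-filter isomorphism of \cite[Thm.~2.7]{Bez98b} directly (as the paper's own proof sketch of that theorem does); this does not affect the correctness of your argument.
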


\begin{proof}
\eqref{lem:property subdir irr:item1} follows from \cite[Thm.~2.11]{Bez98b} and \eqref{lem:property subdir irr:item2} 
is proved similarly.
\end{proof}

Following \cite{MR74eng}, for two formulas $\varphi$ and $\psi$, we write $\varphi \vee' \psi$ for $\varphi \vee \psi'$, where $\psi'$ is obtained by substituting the variables in $\psi$ that occur in $\varphi$ with fresh variables, so that $\varphi$ and $\psi'$ have no variables in common. 

\begin{lemma}\label{lem:description intersection logics}
Let $\Gamma_1, \Gamma_2$ be sets of formulas in 
$\mathcal{L}_{\Box \forall}$ and $\M_i=\ms+\Gamma_i$ for $i=1,2$. Then
\[
\M_1 \cap \M_2 = \ms+\{\bbox\gamma_1 \vee' \bbox\gamma_2 : \gamma_1 \in \Gamma_1, \ \gamma_2 \in \Gamma_2\}.
\]
\end{lemma}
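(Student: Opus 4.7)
The plan is to prove the two inclusions separately, with the easy direction handled by necessitation and the harder direction handled algebraically via subdirect irreducibility and \cref{lem:property subdir irr:item2}.

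For the inclusion $\supseteq$, I would argue directly: if $\gamma_1 \in \Gamma_1$, then $\gamma_1 \in \M_1$, so $\bbox \gamma_1 = \Box\forall\gamma_1 \in \M_1$ by two applications of necessitation. Substitution of fresh variables in $\gamma_2$ followed by disjunction introduction (a theorem of $\cpc$, hence of $\ms$) then gives $\bbox\gamma_1 \vee' \bbox\gamma_2 \in \M_1$. Symmetrically this formula lies in $\M_2$, so it lies in $\M_1 \cap \M_2$. Since $\ms \subseteq \M_1 \cap \M_2$ and the intersection is closed under the rules, the whole logic on the right sits inside $\M_1 \cap \M_2$.

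For the inclusion $\subseteq$, I would argue by contraposition. Let $\N = \ms + \{\bbox\gamma_1 \vee' \bbox\gamma_2 : \gamma_1 \in \Gamma_1, \gamma_2 \in \Gamma_2\}$ and suppose $\varphi \notin \N$. By \cref{thm: lattice iso ms:item1} together with the Subdirect Representation Theorem \cite[Thm.~II.8.6]{BS81}, there is a subdirectly irreducible algebra $\B \in \Alg(\N)$ that refutes $\varphi$. The goal is to show that $\B \in \Alg(\M_1) \cup \Alg(\M_2)$, since then $\varphi$ is refuted in $\Alg(\M_j)$ for some $j$, whence $\varphi \notin \M_j \supseteq \M_1 \cap \M_2$.

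Assume $\B \notin \Alg(\M_1)$; I want to conclude $\B \in \Alg(\M_2)$. By assumption there exist $\gamma_1 \in \Gamma_1$ and a valuation $\vec a_0$ in $\B$ with $\gamma_1^{\B}(\vec a_0) \neq 1$. A quick observation using the $\sfour$- and $\sfive$-axioms shows that $\bbox b = 1$ forces $b = 1$ (since $\Box\forall b = 1$ gives $\forall b = 1$ by the $\sfour$-axiom $\Box p \to p$, and then $b = 1$ by the axiom $\forall p \to p$). Therefore $\bbox \gamma_1^{\B}(\vec a_0) \neq 1$. Now fix any $\gamma_2 \in \Gamma_2$ and any valuation $\vec b$; because the variables have been renamed to be disjoint, the validity of $\bbox\gamma_1 \vee' \bbox\gamma_2$ in $\B$ lets us evaluate the two disjuncts under independent assignments, yielding $\bbox\gamma_1^{\B}(\vec a_0) \vee \bbox\gamma_2^{\B}(\vec b) = 1$. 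Applying \cref{lem:property subdir irr:item2} and using $\bbox\gamma_1^{\B}(\vec a_0) \neq 1$, we conclude $\gamma_2^{\B}(\vec b) = 1$. Since $\vec b$ and $\gamma_2$ were arbitrary, $\B$ validates every formula of $\Gamma_2$, hence $\B \in \Alg(\M_2)$, completing the argument.

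I expect the only subtle point to be justifying that the disjunctive formula $\bbox\gamma_1 \vee' \bbox\gamma_2$ can be evaluated under independent assignments to the two disjuncts; this is exactly the purpose of the variable-renaming operation $\vee'$, and it is precisely why one must pass to $\vee'$ rather than the ordinary $\vee$ in the description of the intersection.
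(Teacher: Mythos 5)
Your proof is correct and takes essentially the same approach as the paper's: both reduce the nontrivial inclusion to subdirectly irreducible algebras via subdirect representation and then combine \cref{lem:property subdir irr:item2} with the variable-disjointness built into $\vee'$ to evaluate the two disjuncts independently. The only difference is organizational: the paper shows directly that a subdirectly irreducible algebra refuting $\M_1 \cap \M_2$ refutes some $\bbox\gamma_1 \vee' \bbox\gamma_2$ by merging two refuting valuations, whereas you run the same computation in the other direction to conclude that a subdirectly irreducible algebra validating all these disjunctions must validate $\M_1$ or $\M_2$.
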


\begin{proof}
It is clear that
\[
\{\bbox\gamma_1 \vee' \bbox\gamma_2 : \gamma_1 \in \Gamma_1, \ \gamma_2 \in \Gamma_2\} \subseteq \M_1 \cap \M_2.
\]
We prove the other inclusion by showing that if a subdirectly irreducible $\ms$-algebra $\B$ validates $\bbox\gamma_1 \vee' \bbox\gamma_2$ for every $\gamma_1 \in \Gamma_1$ and $\gamma_2 \in \Gamma_2$, then it validates $\M_1 \cap \M_2$. We argue by contrapositive. Suppose that $\B \nvDash \M_1 \cap \M_2$. Since $\M_1\cap \M_2\subseteq \M_1,\M_2$, we obtain that $\B \nvDash \M_1,\M_2$. Therefore, 
there are $\gamma_1 \in \Gamma_1$ and $\gamma_2 \in \Gamma_2$ and two valuations $v_1,v_2$ on $\B$ such that $v_1(\gamma_1) \neq 1$ and $v_2(\gamma_2) \neq 1$. 
Let $p_1, \dots, p_n$ be the variables occurring in $\gamma_1$ and $q_1, \dots, q_m$ those occurring in $\gamma_2$. We let $q_1', \dots, q_m'$ be the variables that substitute $q_1, \dots, q_m$ to obtain $\gamma_1 \vee' \gamma_2$.
Define a valuation $v_2'$ on $\B$ by setting $v_2'(q_i')=v_2(q_i)$ for $i = 1, \dots, m$, and $v_2'(p)=v_2(p)$ for the remaining variables.  
Then $v_2'(\gamma_2')=v_2(\gamma_2)$.
Let $v$ be a new valuation 
that coincides with $v_1$ on the variables occurring in $\gamma_1$ and with $v_2'$ on the variables occurring in $\gamma_2'$. Then $v(\gamma_1) \neq 1$ and $v(\gamma_2') \neq 1$. Since $\B$ is subdirectly irreducible, 
\[
v(\bbox\gamma_1 \vee' \bbox\gamma_2) = \bbox v(\gamma_1) \vee \bbox v(\gamma_2') \neq 1
\]
by \cref{lem:property subdir irr:item2},
and hence $\B \nvDash \bbox\gamma_1 \vee' \bbox\gamma_2$. Thus, 
\[
\M_1 \cap \M_2 \subseteq \{\bbox\gamma_1 \vee' \bbox\gamma_2 : \gamma_1 \in \Gamma_1, \ \gamma_2 \in \Gamma_2\},
\]
concluding the proof.
\end{proof}

\begin{theorem}
$\tau$ and $\sigma$ are lattice homomorphisms.
\end{theorem}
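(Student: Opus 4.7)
Since \cref{prop:rho tau sigma adjoints} shows that $\tau$ and $\sigma$ are left adjoints, they already preserve arbitrary joins. My plan is therefore to verify that they also preserve binary meets, i.e.\ that $\tau(\L_1 \cap \L_2) = \tau\L_1 \cap \tau\L_2$ and analogously for $\sigma$. The inclusion from left to right is immediate from monotonicity, so the real content is the reverse inclusion, and the main tool is \cref{lem:description intersection logics}.

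For $\tau$, writing $\tau\L_i = \ms + \Gamma_i$ with $\Gamma_i = \{\varphi^t : \L_i \vdash \varphi\}$, \cref{lem:description intersection logics} gives
\[
\tau\L_1 \cap \tau\L_2 \;=\; \ms + \{\bbox\varphi_1^t \vee' \bbox\varphi_2^t : \L_1 \vdash \varphi_1,\ \L_2 \vdash \varphi_2\}.
\]
The key identity is
\[
\bbox\varphi_1^t \vee' \bbox\varphi_2^t \;=\; (\forall\varphi_1 \vee' \forall\varphi_2)^t,
\]
which follows because $(\forall\psi)^t = \bbox\psi^t$ by definition and $(-)^t$ commutes with disjunction and with variable renaming (a routine induction on formula complexity). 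Now $\L_i \vdash \varphi_i$ combined with necessitation yields $\L_i \vdash \forall\varphi_i$, and weakening gives $\L_i \vdash \forall\varphi_1 \vee' \forall\varphi_2$ for both $i=1,2$. Hence $\forall\varphi_1 \vee' \forall\varphi_2 \in \L_1 \cap \L_2$, so its translation lies in $\tau(\L_1 \cap \L_2)$, closing the argument.

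For $\sigma$, the same strategy applies, now writing $\sigma\L_i = \ms + \Gamma_i$ with $\Gamma_i = \{\grz\} \cup \{\varphi^t : \L_i \vdash \varphi\}$. The generated axioms of $\sigma\L_1 \cap \sigma\L_2$ from \cref{lem:description intersection logics} split into two kinds: those in which both $\gamma_i$ are of the form $\varphi_i^t$, handled exactly as above; and those in which at least one $\gamma_i$ equals $\grz$. For the latter, $\mgrz \vdash \grz$ together with $\Box$- and $\forall$-necessitation gives $\mgrz \vdash \bbox\grz$, so $\bbox\grz \vee' \psi \in \mgrz \subseteq \sigma(\L_1 \cap \L_2)$ for every $\psi$, and the argument goes through.

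The main obstacle I anticipate is the technical verification that $(-)^t$ commutes with variable renaming, which is precisely what identifies $\bbox\varphi_1^t \vee' \bbox\varphi_2^t$ with $(\forall\varphi_1 \vee' \forall\varphi_2)^t$. This is a straightforward induction on formula complexity, but it is the one substantive ingredient needed beyond \cref{prop:rho tau sigma adjoints} and \cref{lem:description intersection logics}.
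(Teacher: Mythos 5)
Your proposal is correct and follows essentially the same route as the paper: reduce to binary meets via the adjunction, apply \cref{lem:description intersection logics}, and use the identity $\bbox\varphi_1^t \vee' \bbox\varphi_2^t = (\forall\varphi_1 \vee' \forall\varphi_2)^t$ together with $\forall\varphi_1 \vee' \forall\varphi_2 \in \L_1 \cap \L_2$. The only difference is that you explicitly carry out the $\sigma$ case (handling the extra $\grz$ generators), which the paper dismisses as ``similar''; your treatment of that case is the right one.
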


\begin{proof}
By \cref{prop:rho tau sigma adjoints:item3}, $\tau$ and $\sigma$ preserve arbitrary joins. Thus, it suffices to show that they preserve binary meets. We only prove it for $\tau$ because the proof for $\sigma$ is similar. Let $\L_1,\L_2 \in \Lambda(\mipc)$. Since $\tau$ preserves $\subseteq$, we have 
$\tau(\L_1 \cap \L_2) \subseteq \tau \L_1  \cap \tau \L_2 $. 
For the other inclusion, by \cref{lem:description intersection logics} it is sufficient to show that $\bbox\varphi_1^t \vee' \bbox\varphi_2^t \in \tau(\L_1 \cap \L_2)$ for every $\varphi_1 \in \L_1$ and $\varphi_2 \in \L_2$. By the definition of the G\"odel translation, $\bbox\varphi_1^t \vee' \bbox\varphi_2^t =(\forall \varphi_1 \vee' \forall \varphi_2)^t$. Thus, $\bbox\varphi_1^t \vee' \bbox\varphi_2^t \in \tau(\L_1 \cap \L_2)$ because $\forall \varphi_1 \vee' \forall \varphi_2 \in \L_1 \cap \L_2$.
\end{proof}

\begin{remark}\plabel{rem:open problems}
\hfill\begin{enumerate}
\item\label[rem:open problems]{rem:open problems:item1}
Whether $\tau$ and $\sigma$ are complete lattice homomorphisms remains open.
\item\label[rem:open problems]{rem:open problems:item2}
Another open problem is the surjectivity of $\rho$. Equivalently, the question of whether every extension of $\mipc$ has a modal companion remains open. In turn,
this is equivalent to determining whether $\tau \L$ is a modal companion of $\L$ for every $\L \in \Lambda(\mipc)$. Indeed, if $\L$ has a modal companion, then $\tau \L$ must be the least such. In \cite{BC24b} we will show that Kripke completeness is a sufficient condition for $\L$ to have a modal companion.
\end{enumerate}
\end{remark}

We now turn our attention to $\rho$.
Since $\mha$ and $\msa$ are congruence-distributive varieties, we will freely use 
J\'onsson's Lemma and especially its corollary that
if a congruence-distributive variety $\mathbb{V}$ is generated by a finite algebra $\A$, then subdirectly irreducible algebras in $\mathbb{V}$ are in 
$\H\S(\A)$ (see, e.g., \cite[Cor.~IV.6.10]{BS81}). We will also utilize that both $\mha$ and $\msa$ have the CEP, 
and hence that $\H\S=\S\H$.

The next lemma is a generalization of a similar result for $\ha$ and $\sa$ (see, e.g., \cite[Thm.~3.4.16]{Esa19}). 

\begin{lemma}\plabel{lem:congr and closed Q-up}
Let $\A \in \mha$ and $\B \in \msa$. There are inclusion-reversing bijections between
\begin{enumerate}
\item\label[lem:congr and closed Q-up]{lem:congr and closed Q-up:item1} the sets of monadic filters of $\A$ and closed $Q$-upsets of $\A_*$, and
\item\label[lem:congr and closed Q-up]{lem:congr and closed Q-up:item2} the sets of monadic $\Box$-filters of $\B$ and closed $Q$-upsets of $\B_*$.
\end{enumerate}
\end{lemma}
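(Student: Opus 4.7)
The plan is to restrict the classical Esakia and J\'onsson--Tarski dualities. Recall that Esakia duality gives an inclusion-reversing bijection between filters of a Heyting algebra $H$ and closed $R$-upsets of its Esakia space, sending a filter $F$ to $C_F \coloneqq \bigcap_{a \in F} \sigma(a)$ and a closed $R$-upset $C$ to $F_C \coloneqq \{a \in H : C \subseteq \sigma(a)\}$. Analogously, J\'onsson--Tarski duality gives an inclusion-reversing bijection between $\Box$-filters of an $\sfour$-algebra $B$ and closed $R$-upsets of its ultrafilter space. All that remains is to show that, under these bijections, the monadic filters correspond exactly to the closed $Q$-upsets, where in case~(2) we work with $Q \coloneqq E \circ R$ on $\B_*$ as in \cref{def:skeleton}.

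The key observation will be that in both cases $\sigma$ turns the universal modality into the $\Box$-modality associated to the relation $Q$. Concretely, by \cref{rem:ClopUp and Pf},
\[
\sigma(\forall a) = X \setminus Q^{-1}[X \setminus \sigma(a)] = \{x \in X : Q[x] \subseteq \sigma(a)\};
\]
and for case~(2), the proof of \cref{thm: duality for ms4} yields $\sigma(\Box a) = \{x \in Y : R[x] \subseteq \sigma(a)\}$ and $\sigma(\forall a) = \{x \in Y : E[x] \subseteq \sigma(a)\}$, so by composition $\sigma(\bbox a) = \{x \in Y : Q[x] \subseteq \sigma(a)\}$.

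Given these identities, both directions are immediate. For case~(1), if $C$ is a closed $Q$-upset and $a \in F_C$, then for each $x \in C$ one has $Q[x] \subseteq C \subseteq \sigma(a)$, so $x \in \sigma(\forall a)$, whence $\forall a \in F_C$; conversely, if $F$ is monadic and $x \in C_F$ with $xQy$, then $\forall a \in F$ for every $a \in F$ forces $x \in \sigma(\forall a)$, so $y \in Q[x] \subseteq \sigma(a)$, i.e.,\ $y \in C_F$. Case~(2) is entirely parallel with $\forall$ replaced by $\bbox$, which is legitimate because a $\Box$-filter is automatically closed under $\Box$, so its monadicity (closure under $\forall$) is equivalent to closure under $\bbox$. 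The main (minor) obstacle is just bookkeeping: once the dual description of $\forall$ (respectively $\bbox$) as a $\Box$-modality for $Q$ is in hand, the verification that the classical filter/closed-$R$-upset bijections restrict correctly is routine.
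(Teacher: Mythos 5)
Your argument is correct and is essentially the standard one: the paper itself gives no proof here beyond citing \cite[Thm.~18]{Bez99} and \cite[Thm.~3.4]{BM24}, and those results are obtained exactly as you do, by restricting the classical filter/closed-upset correspondences using the dual descriptions of $\forall$ and $\bbox$ as box-modalities for $Q$. You also correctly handle the one point in (2) that needs care, namely that for a $\Box$-filter closure under $\forall$ is equivalent to closure under $\bbox$ (via $\bbox a \le \forall a$ and upward closure of filters).
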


\begin{proof}[Sketch of proof.]
The proof of \eqref{lem:congr and closed Q-up:item1} can be found in \cite[Thm.~18]{Bez99} and \eqref{lem:congr and closed Q-up:item2} is proved similarly (see, e.g., \cite[Thm.~3.4]{BM24}).
\end{proof}

\begin{remark}\label{rem:quotients and closed Q-upsets}
Let $\A \in \mha$, $\theta$ be a congruence on $\A$, and $Z$ the corresponding closed $Q$-upset of $\A_*=(X,R,Q)$. 
The quotient $\A/\theta$ is then dual to the descriptive $\mipc$-frame $(Z,R_{|Z},Q_{|Z})$ obtained by restricting $R$ and $Q$ to $Z$. A similar correspondence holds for quotients of $\ms$-algebras and closed $Q$-upsets of their duals.
\end{remark}

\begin{definition}
A descriptive $\mipc$-frame $(X,R,Q)$ is \emph{strongly $Q$-rooted} if there is $x \in X$ such that $Q[x]=X$ and $E_Q[x]$ is clopen. 
\end{definition}

Strongly $Q$-rooted descriptive $\ms$-frames are defined similarly. 
As a consequence of \cref{lem:congr and closed Q-up}, we have the following dual characterization of subdirectly irreducible algebras in $\mha$ and $\msa$, which generalizes a similar characterization of subdirectly irreducible algebras in $\ha$ and $\sa$ (see, e.g., \cite[Prop.~A.1.2]{Esa19}). The proof of \eqref{prop:subdirectly irr dually:item1} can be found in \cite[Thm.~24]{Bez99}, and \eqref{lem:congr and closed Q-up:item2} is proved similarly (see, e.g., \cite[Thm.~3.5]{BM24}). 

\needspace{5\baselineskip}
\begin{lemma}\plabel{prop:subdirectly irr dually}
\hfill\begin{enumerate}
\item\label[prop:subdirectly irr dually]{prop:subdirectly irr dually:item1} $\A \in \mha$ is subdirectly irreducible iff $\A_*$ is strongly $Q$-rooted.
\item\label[prop:subdirectly irr dually]{prop:subdirectly irr dually:item2} $\B \in \msa$ is subdirectly irreducible iff $\B_*$ is strongly $Q$-rooted.
\end{enumerate}
\end{lemma}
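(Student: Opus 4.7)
The plan is to combine the algebraic criterion for subdirect irreducibility with the dual description of congruences. Recall that $\A$ is subdirectly irreducible precisely when it admits a least nontrivial congruence. By \cref{thm:correspondence congruences filters} together with \cref{lem:congr and closed Q-up:item1}, congruences of $\A$ correspond inclusion-reversingly to closed $Q$-upsets of $\A_*=(X,R,Q)$. Thus (1) reduces to showing that $\A_*$ admits a largest proper closed $Q$-upset $Z_0 \subsetneq X$ if and only if $\A_*$ is strongly $Q$-rooted.

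For the forward direction, assume such a $Z_0$ exists and pick $x_0 \in X \setminus Z_0$. First verify $Q[x_0]=X$: the set $Q[x_0]$ is a closed $Q$-upset (closed by continuity of $Q$, and a $Q$-upset by transitivity), so if it were proper it would be contained in $Z_0$, forcing $x_0 \in Z_0$ by reflexivity. The same argument applied to every $y \in X\setminus Z_0$ shows that each such $y$ is a $Q$-root, so $E_Q[x_0]=X\setminus Z_0$, which is open. For closedness, note that $Q[x_0]=X$ yields $E_Q[x_0]=Q^{-1}[x_0]$, and on a Stone space the fibers $Q^{-1}[x]$ of a continuous relation are closed: write $\{x_0\}=\bigcap\{U : U \text{ clopen},\ x_0\in U\}$ and invoke the fact that $Q^{-1}$ sends clopens to clopens, together with closedness of $Q[y]$ for each $y$, to conclude. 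Hence $E_Q[x_0]$ is clopen and $\A_*$ is strongly $Q$-rooted.

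For the converse, assume $x_0$ satisfies $Q[x_0]=X$ and $E_Q[x_0]$ is clopen. Then $X\setminus E_Q[x_0]$ is clopen and proper, and it is a $Q$-upset: if $y\notin E_Q[x_0]$ and $yQz$ with $z\in E_Q[x_0]$, then $zQx_0$ yields $yQx_0$, and combined with $x_0Qy$ this would give $y\in E_Q[x_0]$, a contradiction. Moreover, $X\setminus E_Q[x_0]$ dominates every proper closed $Q$-upset $Z$: were $z\in Z\cap E_Q[x_0]$, then $x_0\in Q[z]\subseteq Z$ would force $X=Q[x_0]\subseteq Z$, contradicting properness. Thus $X\setminus E_Q[x_0]$ is the largest proper closed $Q$-upset, so $\A$ admits a least nontrivial congruence and is subdirectly irreducible.

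Part (2) is proved by an identical argument on descriptive $\ms$-frames, using \cref{lem:congr and closed Q-up:item2} in place of \cref{lem:congr and closed Q-up:item1} and working with the associated continuous quasi-order $Q$ on $\B_*$. The only slightly delicate step is the topological verification that $Q^{-1}[x_0]$ is closed, because the paper's notion of continuity supplies closed fibers $Q[x]$ directly but only specifies the behavior of $Q^{-1}$ on clopen sets; the argument via intersections of clopens is what bridges this gap. Everything else is routine manipulation of closed $Q$-upsets.
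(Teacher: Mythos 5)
Your proof is correct. Note that the paper does not actually prove this lemma: it defers to \cite[Thm.~24]{Bez99} for part (1) and to \cite[Thm.~3.5]{BM24} for part (2), so what you have written is a self-contained reconstruction of the cited results, following the expected route: subdirect irreducibility is equivalent, via the inclusion-reversing correspondence of \cref{lem:congr and closed Q-up}, to the existence of a largest closed $Q$-upset properly contained in $X$, and this in turn is equivalent to strong $Q$-rootedness. Three small points are worth tightening. First, the congruence--monadic-filter correspondence for an \emph{arbitrary} $\A\in\mha$ is really \cite[Thm.~2.7]{Bez98b} rather than \cref{thm:correspondence congruences filters}, which is stated only for algebras of the form $\Op(\B)$. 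Second, in the forward direction you only argue the inclusion $X\setminus Z_0\subseteq E_Q[x_0]$ explicitly; the reverse inclusion, which is what gives openness of $E_Q[x_0]$, follows because $y E_Q x_0$ implies $y Q x_0$, so $y\in Z_0$ would force $x_0\in Q[y]\subseteq Z_0$ --- the same one-liner you use in the converse, but it should be stated here too. Third, you correctly isolate the only genuinely topological step, the closedness of $Q^{-1}[x_0]$: to complete it one observes that $Q^{-1}[x_0]=\bigcap\{Q^{-1}[U] : U \text{ clopen},\ x_0\in U\}$, where the nontrivial inclusion uses compactness together with the closedness of each $Q[y]$ (the sets $Q[y]\cap U$ have the finite intersection property, hence meet in $Q[y]\cap\{x_0\}$). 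With these details supplied the argument is complete, and it transfers verbatim to descriptive $\ms$-frames with $Q=E\circ R$, whose continuity is recorded in the proof of \cref{thm:sk(G) is descriptive}.
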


Let $h \colon \B_1 \to \B_2$ be a homomorphism of $\sfour$-algebras. It is well known that $h$ is one-to-one iff $h_*$ is onto (see, e.g., \cite[Lem.~3.3.13]{Esa19}), and that the same holds for Heyting algebra homomorphisms. As an immediate consequence, we obtain:

\begin{lemma}\plabel{prop:1-1 and onto}
\hfill\begin{enumerate}
\item\label[prop:1-1 and onto]{prop:1-1 and onto:item1} A homomorphism $f$ of monadic Heyting algebras is one-to-one iff $f_*$ is onto.
\item\label[prop:1-1 and onto]{prop:1-1 and onto:item2} A homomorphism $g$ of $\ms$-algebras is one-to-one iff $g_*$ is onto.  
\end{enumerate}
\end{lemma}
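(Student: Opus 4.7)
The plan is to reduce both parts to the already-cited facts for Heyting algebras and $\sfour$-algebras, essentially by observing that injectivity and surjectivity are properties of the underlying functions, and that the dualities for $\mha$ and $\msa$ extend those for $\ha$ and $\sa$ in a way that leaves these underlying functions unchanged.

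For part \eqref{prop:1-1 and onto:item1}, let $f \colon \A_1 \to \A_2$ be an $\mha$-morphism. Since $f$ is in particular a Heyting algebra homomorphism between the underlying Heyting algebras of $\A_1$ and $\A_2$, and injectivity of $f$ depends only on its underlying function, $f$ is one-to-one as an $\mha$-morphism iff it is one-to-one as a Heyting homomorphism. By \cref{rem:ClopUp and Pf}, the dual $f_*$ is defined by $f_*(x) = f^{-1}[x]$ for each prime filter $x$, which is exactly the formula used in Esakia duality. Hence the underlying function of $f_*$ coincides with the Esakia dual of the underlying Heyting homomorphism. Since surjectivity also depends only on the underlying function, applying the classical fact (see, e.g., \cite[Lem.~3.3.13]{Esa19} stated for $\sa$ and its well-known Heyting-algebra analogue) gives that $f$ is one-to-one iff $f_*$ is onto.

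For part \eqref{prop:1-1 and onto:item2}, the argument is identical with Esakia duality replaced by J\'onsson-Tarski duality: an $\msa$-morphism $g \colon \B_1 \to \B_2$ is one-to-one iff it is one-to-one as a boolean homomorphism, and by the definition $g_*(y) = g^{-1}[y]$ recalled in the proof of \cref{thm: duality for ms4}, the underlying function of $g_*$ is the J\'onsson-Tarski dual of the underlying boolean homomorphism. The classical result then yields that $g$ is one-to-one iff $g_*$ is onto.

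No serious obstacle is expected here: the substantive content is borrowed from the Heyting/boolean setting, and the only point to articulate carefully is that adding the operations $\forall$, $\exists$ (resp.\ $\forall$, $\Box$) and adding the extra relation $Q$ (resp.\ $E$) on the dual side does not affect which maps are injective or surjective, because both notions are witnessed purely at the level of underlying sets.
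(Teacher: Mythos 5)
Your proposal is correct and follows essentially the same route as the paper: the paper derives the lemma ``as an immediate consequence'' of the classical facts for Heyting and $\sfour$-algebras, relying precisely on the observation that the monadic dual functors act on underlying functions exactly as the Esakia and J\'onsson--Tarski duals do, so injectivity and surjectivity transfer unchanged. Your write-up merely makes this reduction explicit, which is fine.
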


The following dual characterization of the composition of the operators $\H$ and $\S$ is an immediate consequence of \cref{lem:congr and closed Q-up}, \cref{prop:1-1 and onto}, and the fact that both $\mha$ and $\msa$ have the CEP.

\begin{lemma}\plabel{prop:HS dual}
Let $\F_1,\F_2\in\mipcfrm$ and $\G_1,\G_2\in\msfrm$.
\begin{enumerate}
\item\label[prop:HS dual]{prop:HS dual:item1} 
$\F_2^*\in \H\S(\F_1^*)$ $\iff$ $\F_2^*\in \S\H(\F_1^*)$ $\iff$
there is an onto $\mipcfrm$-morphism from a closed
$Q_1$-upset of $\F_1$ to $\F_2$.
\item\label[prop:HS dual]{prop:HS dual:item2} 
$\G_2^*\in \H\S(\G_1^*)$ $\iff$
$\G_2^*\in \S\H(\G_1^*)$ $\iff$
there is an onto $\msfrm$-morphism from a closed
$Q_1$-upset of $\G_1$ to $\G_2$.
\end{enumerate}
\end{lemma}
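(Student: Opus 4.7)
The plan is to assemble the lemma from three ingredients already established in the paper, together with one standard universal-algebraic fact. No genuinely new input is needed—the proof is pure bookkeeping through the dualities. The ingredients are: (i) both $\mha$ and $\msa$ have the CEP, so that by \cite[p.~62]{BS81} one has $\H\S = \S\H$ on any subclass; (ii) \cref{lem:congr and closed Q-up} together with \cref{rem:quotients and closed Q-upsets} identify congruences on an algebra with closed $Q$-upsets of its dual frame, with the quotient algebra dualizing to the corresponding closed $Q$-upset equipped with the restricted relations; (iii) \cref{prop:1-1 and onto} says that injective algebra maps are dual to surjective frame maps. Since the arguments for (1) and (2) are identical with the obvious substitutions, I will describe only (1).

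The first equivalence $\H\S(\F_1^*) = \S\H(\F_1^*)$ is immediate from (i). So I focus on the remaining equivalence. Suppose $\F_2^* \in \S\H(\F_1^*)$. Fix $\A \in \mha$ together with an onto $\mha$-morphism $\pi\colon \F_1^* \twoheadrightarrow \A$ and a one-to-one $\mha$-morphism $\iota\colon \F_2^* \hookrightarrow \A$. The kernel of $\pi$ is a congruence of $\F_1^*$, which by (ii) corresponds to a closed $Q_1$-upset $Z$ of $\F_1$ in such a way that $\A_*$ is isomorphic to $(Z, R_{1|Z}, Q_{1|Z})$. Applying (iii) to $\iota$ yields that $\iota_*\colon (Z, R_{1|Z}, Q_{1|Z}) \twoheadrightarrow \F_2$ (after identifying via the isomorphism above) is an onto $\mipcfrm$-morphism. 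This gives the desired morphism from a closed $Q_1$-upset of $\F_1$ to $\F_2$.

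Conversely, suppose there is a closed $Q_1$-upset $Z$ of $\F_1$ together with an onto $\mipcfrm$-morphism $\varphi\colon (Z, R_{1|Z}, Q_{1|Z}) \twoheadrightarrow \F_2$. By (ii), the algebra $\A \coloneqq (Z, R_{1|Z}, Q_{1|Z})^*$ lies in $\H(\F_1^*)$, and by (iii) the dual $\varphi^*\colon \F_2^* \hookrightarrow \A$ is a one-to-one $\mha$-morphism, so $\F_2^* \in \S\H(\F_1^*)$. The argument for (2) is verbatim the same, replacing $\mha$ by $\msa$, $\mipcfrm$ by $\msfrm$, and the relevant items of \cref{lem:congr and closed Q-up} and \cref{prop:1-1 and onto}.

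The only subtle point, and the place where care is needed, is ensuring that the dual of a quotient is genuinely the restriction of the ambient frame to the corresponding closed $Q$-upset—but this is exactly what \cref{rem:quotients and closed Q-upsets} records, so no obstacle arises.
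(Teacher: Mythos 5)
Your proof is correct and follows exactly the route the paper intends: the paper derives this lemma as an immediate consequence of the CEP (giving $\H\S=\S\H$), the correspondence between congruences and closed $Q$-upsets (\cref{lem:congr and closed Q-up} and \cref{rem:quotients and closed Q-upsets}), and the duality between one-to-one algebra morphisms and onto frame morphisms (\cref{prop:1-1 and onto}), which are precisely your ingredients (i)--(iii). You have merely written out the bookkeeping that the paper leaves implicit, and you have done so correctly.
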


\begin{figure}
\begin{tikzpicture}[-{Latex[width=1mm]}]
\coordinate (B1) at (-2,-1.5);
\coordinate (ML1) at (-3.5,0);
\coordinate (MR1) at (-2,0);
\coordinate (T1) at (-2,1.5);
\coordinate (B2) at (3,-1.5);
\coordinate (M2) at (3,0);
\coordinate (T2) at (3,1.5);
\fill (B1) circle(2pt);
\fill (T1) circle(2pt);
\fill (ML1) circle(2pt);
\fill (MR1) circle(2pt);
\fill (B2) circle(2pt);
\fill (M2) circle(2pt);
\fill (T2) circle(2pt);
\draw (ML1) -- (T1);
\draw (MR1) -- (T1);
\draw (M2) -- (T2);
\draw (B1) -- (ML1);
\draw (B1) -- (MR1);
\draw (B2) -- (M2);
\clustertwo{MR1}{T1}{1.6}{1};
\clusterone{ML1}{1.2};
\clusterone{B1}{1.2};
\clusterone{B2}{1.2};
\clustertwo{M2}{T2}{1.6}{1};
\node at ([shift={(0:0.6)}]B1) {$u$};
\node at ([shift={(0:0.6)}]MR1) {$v$};
\node at ([shift={(0:0.6)}]ML1) {$w$};
\node at ([shift={(0:0.65)}]T1) {$z$};
\node at ([shift={(0:0.6)}]B2) {$a$};
\node at ([shift={(0:0.6)}]M2) {$b$};
\node at ([shift={(0:0.6)}]T2) {$c$};
\node at (-2,-2.4) {$\mathfrak{K}_1$};
\node at (3.1,-2.4) {$\mathfrak{K}_2$};
\end{tikzpicture}
\caption{The frames $\mathfrak{K}_1$ and $\mathfrak{K}_2$.}\label{fig: G1 and G2}
\end{figure}

\begin{definition}\label{def:G1 and G2}
Let $\mathfrak{K}_1=(Y_1,R_1,E_1)$ and $\mathfrak{K}_2=(Y_2,R_2,E_2)$ be the descriptive $\ms$-frames depicted in \cref{fig: G1 and G2}, where the black arrows 
represent the partial orders $R_i$ and the red circles the $E_i$-equivalence classes ($i=1,2$).
\end{definition}

\begin{remark}
An argument similar to \cref{ex:sk(f) not p-morph wrt E} gives that $\mathfrak{K}_1$ and $\mathfrak{K}_2$ are indeed descriptive $\ms$-frames.
\end{remark}

Let $\B_1=\mathfrak{K}_1^*$ and $\B_2=\mathfrak{K}_2^*$. Since both $\mathfrak{K}_1$ and $\mathfrak{K}_2$ are finite, their topologies are discrete, and so 
$\B_1$ and $\B_2$ are the powersets of $Y_1$ and $Y_2$, respectively. We also define $\mathbb{V}_1\coloneqq\V(\B_1)$ and $\mathbb{V}_2\coloneqq\V(\B_2)$. Because each 
$R_i$ is a partial order, 
each $\B_i$ is a finite $\mgrz$-algebra (see, e.g., \cite[Cor.~3.5.10]{Esa19}).

\begin{lemma}\plabel{prop:morph between G1 and G2}
\hfill\begin{enumerate}
\item\label[prop:morph between G1 and G2]{prop:morph between G1 and G2:item1} There is an onto $\mipcfrm$-morphism from $\sk(\mathfrak{K}_1)$ to $\sk(\mathfrak{K}_2)$.
\item\label[prop:morph between G1 and G2]{prop:morph between G1 and G2:item2} There is no onto $\msfrm$-morphism from a $Q_1$-upset of $\mathfrak{K}_1$ to $\mathfrak{K}_2$.
\end{enumerate}
\end{lemma}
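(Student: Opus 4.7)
The plan is to compute both skeletons explicitly, exhibit an onto $\mipcfrm$-morphism for part~(1), and rule out any onto $\msfrm$-morphism from a $Q_1$-upset of $\mathfrak{K}_1$ to $\mathfrak{K}_2$ for part~(2). Since $R_1$ and $R_2$ are partial orders, each $E_{R_i}$ is the identity, so $\sk(\mathfrak{K}_i) = (Y_i, R_i, Q_i)$ with $Q_i = E_i \circ R_i$ and the quotient map the identity. Direct calculation yields $Q_1[u] = Y_1$, $Q_1[v] = Q_1[z] = \{v,z\}$, $Q_1[w] = \{v,w,z\}$, and $Q_2[a] = Y_2$, $Q_2[b] = Q_2[c] = \{b,c\}$.

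For part~(1), I would define $f \colon Y_1 \to Y_2$ by $f(u) = a$, $f(v) = b$, and $f(w) = f(z) = c$. Continuity is automatic since the spaces are finite, and the $R$- and $Q$-p-morphism conditions reduce to four finite verifications using the $Q_i$-images listed above. The delicate point is the weak $Q^{-1}$-p-morphism condition \cref{def:mipcfrm-morphisms:item4}; the only case requiring care is $x = w$, where $Q_1^{-1}[w] = \{u, w\}$ and so $R_2^{-1} f Q_1^{-1}[w] = R_2^{-1}\{a, c\} = \{a, b, c\} = Q_2^{-1}[c]$, as required.

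For part~(2), I would first enumerate the $Q_1$-upsets of $\mathfrak{K}_1$. Since $R_1$ is reflexive, any $Q_1$-upset is automatically closed under both $R_1$ and $E_1$, and inspection gives the complete list $\emptyset$, $\{v,z\}$, $\{v,w,z\}$, $Y_1$. Only the last two have cardinality at least three, and both contain $w$. Arguing by contradiction, suppose $g \colon A \twoheadrightarrow \mathfrak{K}_2$ is an onto $\msfrm$-morphism with $w \in A$. The $E$-p-morphism condition \cref{def:msfrm-morphisms:item3} forces $E_2[g(w)] = g E_1[w] = \{g(w)\}$, so $g(w)$ lies in a singleton $E_2$-class; since $a$ is the unique such point, $g(w) = a$. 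But then the $R$-p-morphism condition \cref{def:msfrm-morphisms:item2} yields $R_2[a] = g R_1[w] \subseteq \{a, g(z)\}$, contradicting $|R_2[a]| = 3$.

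The main obstacle is choosing the correct map in part~(1): the symmetric candidate $f(v) = f(w) = b$ satisfies the $R$- and $Q$-p-morphism conditions but fails \cref{def:mipcfrm-morphisms:item4} at $x = w$, where $R_2^{-1}\{a,b\} = \{a,b\}$ while $Q_2^{-1}[b] = \{a,b,c\}$. Breaking the apparent symmetry between $v$ and $w$ by setting $f(w) = c$ repairs this. Once part~(1) is proved, part~(2) becomes a short case analysis leveraging the rigidity imposed by the singleton $E_1$-class at $w$.
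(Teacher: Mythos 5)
Your proof is correct and takes essentially the same approach as the paper: part (1) uses the identical map $f(u)=a$, $f(v)=b$, $f(w)=f(z)=c$ and the same finite verifications, including the same computation $R_2^{-1}f Q_1^{-1}[w]=R_2^{-1}[\{a,c\}]=Y_2$ at the one delicate point. Part (2) arrives at the same contradiction centered on the singleton $E_1$-class of $w$, merely reorganized: you enumerate the $Q_1$-upsets and apply the $E$-condition before the $R$-condition, whereas the paper first locates the preimage of $a$ (forcing $U=Y_1$) and then applies the $R$-condition before the $E$-condition.
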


\begin{proof}
\eqref{prop:morph between G1 and G2:item1}. 
Since $R_i$ is a partial order for $i=1,2$, we have that
$\pi_i \colon \mathfrak{K}_i \to \sk(\mathfrak{K}_i)$ is a bijection that preserves and reflects $R_i$ and $Q_i$. Thus, we may identify $\rho(\mathfrak{K}_i)$ with $\mathfrak{K}_i$.
Define $f \colon \sk(\mathfrak{K}_1) \to \sk(\mathfrak{K}_2)$ 
by
\[
f(u)=a, \quad f(v)=b, \quad \text{and} \quad f(w)=f(z)=c.
\]
Clearly $f$ is onto, and it is straightforward to see that $f$ is a p-morphism with respect to $R$ and $Q$. The following calculations show that $f$ is also a weak p-morphism with respect to $Q^{-1}$:
\begin{align*}
Q_2^{-1}[f(u)] & = Q_2^{-1}[a] = \{ a \}  = R_2^{-1}[a] = R_2^{-1}[f(u)] = R_2^{-1}fQ_1^{-1}[u], \\
Q_2^{-1}[f(v)] & = Q_2^{-1}[b] = Y_2 = R_2^{-1}[Y_2] = R_2^{-1}f[Y_1] = R_2^{-1}fQ_1^{-1}[v], \\
Q_2^{-1}[f(w)] & = Q_2^{-1}[c] = Y_2 = R_2^{-1}[\{ a, c \}] = R_2^{-1}f[\{u, w \}] = R_2^{-1}fQ_1^{-1}[w], \\
Q_2^{-1}[f(z)] & = Q_2^{-1}[c] = Y_2 = R_2^{-1}[Y_2] = R_2^{-1}f[Y_1] = R_2^{-1}fQ_1^{-1}[z].
\end{align*}
Thus, 
$f$ is a $\mipcfrm$-morphism.

\eqref{prop:morph between G1 and G2:item2}. 
Suppose there is a $Q_1$-upset $U$ of $\mathfrak{K}_1$ and an onto $\msfrm$-morphism $g \colon U \to \mathfrak{K}_2$.
Since $g$ is onto, there is $x \in U$ such that $g(x)=a$. Because $g$ is a p-morphism with respect to $R$, we have $g[R_1[x]] =R_2[g(x)]= R_2[a]$. Since $R_2[a]$ has $3$ elements, $R_1[x]$ must have at least $3$ elements. Therefore, $x=u$, and hence $u \in U$. This implies that $U=Y_1$.
Because the cardinality of $R_1[w]$ is $2$,
the cardinality of $R_2[g(w)]=g[R_1[w]]$ is at most $2$. Thus, $g(w)\in \{b,c\}$. That $g$ is a p-morphism with respect to $E$ implies that $g[E_1[w]]=E_2[g[w]]=
\{b,c\}$. Consequently, $E_1[w]$ must have at least $2$ elements, which is a contradiction, proving that such a $g$ does not exist. 
\end{proof}

\begin{lemma}\label{lem:is poset then sk iso implies iso}
Let $\G_1,\G_2$ be two partially ordered descriptive $\ms$-frames. 
If $\sk(\G_1)$ and $\sk(\G_2)$ are isomorphic in $\mipcfrm$, then $\G_1$ and $\G_2$ are isomorphic in $\msfrm$.
\end{lemma}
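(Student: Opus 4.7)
The plan is to reduce the statement to a key observation: when $R$ is a partial order, the equivalence relation $E$ in an $\ms$-frame $\G=(Y,R,E)$ is already determined by $R$ and $Q=E\circ R$, namely $E=E_Q$. Once this is in hand, the conclusion is essentially bookkeeping via the characterization of isomorphisms in the two categories.

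First I would note that since each $R_i$ is a partial order, the induced equivalence $E_{R_i}$ is the identity, so the quotient map $\pi_i$ is a bijection that allows me to identify $\sk(\G_i)$ with $(Y_i,R_i,Q_i)$ where $Q_i=E_i\circ R_i$, exactly as in the proof of \cref{prop:morph between G1 and G2}\eqref{prop:morph between G1 and G2:item1}. Any $\mipcfrm$-isomorphism $f\colon\sk(\G_1)\to\sk(\G_2)$ therefore transports to a continuous bijection $g\colon Y_1\to Y_2$ which, by \cref{rem:isos mipc}, preserves and reflects $R_i$ and $Q_i$, hence also $E_{Q_i}=Q_i\cap Q_i^{-1}$. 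Granted the key observation $E_i=E_{Q_i}$, the map $g$ then preserves and reflects $E_i$ as well, and \cref{rem:isos} identifies it as an $\msfrm$-isomorphism.

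The main work is in establishing that $E=E_Q$ in a partially ordered $\ms$-frame $(Y,R,E)$. The inclusion $E\subseteq E_Q$ is immediate from reflexivity of $R$. For $E_Q\subseteq E$, suppose $xE_Qy$, so there exist $z_1$ with $xRz_1, z_1Ey$ and $z_2$ with $yRz_2, z_2Ex$. I would consider the set
\[
A=R[x]\cap\bigl(E[x]\cup E[y]\bigr),
\]
which is non-empty (contains $x$) and closed in $Y$, using continuity of $R$ and $E$. The standard compactness-plus-Zorn argument for continuous partial orders on Stone spaces (the family $\{R[c]\cap A:c\in C\}$ has the finite intersection property along any chain $C$) produces an $R$-maximal element $m\in A$; here antisymmetry of $R$ ensures that maximality means $mRv\Rightarrow v=m$ for $v\in A$. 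If $mEx$, applying \cref{def:descriptive ms-frame:item4} to $mEx$ and $xRz_1$ yields $w$ with $mRw$ and $wEz_1$; then $xRw$ by transitivity and $wEy$ via $z_1$, so $w\in A$ and maximality forces $w=m$, giving $mEy$ and hence $xEy$. The case $mEy$ is symmetric, applying \cref{def:descriptive ms-frame:item4} to $mEy$ and $yRz_2$.

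The main obstacle is the proof of $E=E_Q$, and specifically the delicate interaction between the $\ms$-frame condition, antisymmetry, and the topological extraction of a maximal element. Without antisymmetry the argument collapses (one cannot conclude $w=m$ from $mRw$), which is consistent with the fact that isomorphism of skeletons is much weaker in general than isomorphism of the original $\ms$-frames; this asymmetry is precisely what the $\mgrz$-setting tames and what underlies the use of this lemma in the failure-of-Blok--Esakia machinery to follow.
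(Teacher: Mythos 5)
Your argument is correct, and at the top level it follows the same reduction as the paper's proof: since each $R_i$ is a partial order, $\pi_i$ identifies $\G_i$ with $\sk(\G_i)$, the $\mipcfrm$-isomorphism transports to a bijection preserving and reflecting $R$ and $Q$, and one then appeals to the characterizations of isomorphisms in the two categories. The substantive difference is that you isolate and actually prove the fact on which this reduction rests, namely that $E=E_Q$ in a partially ordered descriptive $\ms$-frame; this is genuinely needed, because the skeleton only records $R'$ and $Q'$, so a bijection preserving and reflecting $R$ and $Q$ is not a priori seen to preserve $E$ unless $E$ is definable from them. The paper's one-sentence proof treats this recoverability as immediate and never states the identity $E=E_Q$, whereas your maximal-point argument (take $m$ maximal in the closed set $R[x]\cap(E[x]\cup E[y])$, then use \cref{def:descriptive ms-frame:item4} and antisymmetry to force $m\in E[x]\cap E[y]$) supplies a correct justification valid for arbitrary, possibly infinite, descriptive frames; I checked the closedness of $A$, the Zorn-plus-compactness extraction of $m$, and both cases $mEx$ and $mEy$, and they all go through. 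For the finite frames to which the lemma is actually applied one could argue more cheaply that a pair in $E_Q\setminus E$ would generate an infinite strictly ascending $R$-chain by iterating \cref{def:descriptive ms-frame:item4}, but your topological version matches the generality of the statement. In short: same route as the paper, with the key step made explicit and proved rather than asserted.
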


\begin{proof}
Since $\G_i$ is partially ordered for $i=1,2$, we have that $\pi_i\colon\G_i\to\rho(\G_i)$ is a bijection that preserves and reflects the relations on $\G_i$. Because a $\mipcfrm$-isomorphism ${g\colon\sk(\G_1)\to\sk(\G_2)}$ is a bijection that preserves and reflects the relations on $\sk(\G_1)$ (see \cref{rem:isos mipc}), it gives rise to a bijection $f\colon\G_1\to\G_2$ that preserves and reflects the relations on $\G_1$. Thus, $f$ is a $\msfrm$-isomorphism (see \cref{rem:isos}).
\end{proof}

\begin{lemma}\plabel{prop:Ai and Bi}
\hfill\begin{enumerate}
\item\label[prop:Ai and Bi]{prop:Ai and Bi:item1} $\Op(\B_2)$ embeds into $\Op(\B_1)$.
\item\label[prop:Ai and Bi]{prop:Ai and Bi:item2} $\B_2 \notin \H\S(\B_1)$.
\item\label[prop:Ai and Bi]{prop:Ai and Bi:item3} $\Op(\B_2) \notin \Op(\mathbb{V}_1)$.
\end{enumerate}
\end{lemma}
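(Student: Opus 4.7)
For \eqref{prop:Ai and Bi:item1}, the plan is to invoke the duality between $\Op$ and $\sk$ established in \cref{thm:Op and sk}, which gives natural isomorphisms $\Op(\B_i)\cong\sk(\mathfrak{K}_i)^*$ for $i=1,2$. The onto $\mipcfrm$-morphism $\sk(\mathfrak{K}_1)\to\sk(\mathfrak{K}_2)$ supplied by \cref{prop:morph between G1 and G2:item1} then dualizes, via \cref{rem:onto and 1-1 mha mipcfrm}, to a one-to-one $\mha$-morphism $\Op(\B_2)\hookrightarrow\Op(\B_1)$.

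For \eqref{prop:Ai and Bi:item2}, the strategy is a direct application of the dual characterization of $\H\S$: by \cref{prop:HS dual:item2}, $\B_2\in\H\S(\B_1)$ would require an onto $\msfrm$-morphism from a closed $Q_1$-upset of $\mathfrak{K}_1$ to $\mathfrak{K}_2$, and no such morphism exists by \cref{prop:morph between G1 and G2:item2}.

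For \eqref{prop:Ai and Bi:item3}, the plan is a proof by contradiction that reduces the claim to \eqref{prop:Ai and Bi:item2}; the main obstacle will be to constrain a hypothetical witness $\B\in\mathbb{V}_1$ with $\Op(\B)\cong\Op(\B_2)$ enough that a skeleton isomorphism upgrades to a full $\msfrm$-isomorphism, which we will accomplish by exploiting the Grzegorczyk axiom. Concretely, assume $\Op(\B_2)\cong\Op(\B)$ for some $\B\in\mathbb{V}_1$. A direct check shows $\sk(\mathfrak{K}_2)$ is strongly $Q'$-rooted (the image of $a$ is a root with singleton $E_{Q'}$-class), so \cref{prop:subdirectly irr dually:item1} yields that $\Op(\B_2)$ is subdirectly irreducible, and hence so is $\B$ by \cref{cor:B sub irr iff O(B) sub irr}. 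Congruence-distributivity of $\msa$ together with J\'onsson's Lemma then gives $\B\in\H\S(\B_1)$. Because $R_1$ is a partial order on a finite set, $\B_1$ validates the Grzegorczyk axiom, and since $\mgrz$ is a variety, so does $\B$; dually, $R$ on $\B_*$ is a partial order. From $\Op(\B)\cong\Op(\B_2)$ and \cref{thm:Op and sk} we obtain $\sk(\B_*)\cong\sk(\mathfrak{K}_2)$ in $\mipcfrm$, and since both $\B_*$ and $\mathfrak{K}_2$ are partially ordered, \cref{lem:is poset then sk iso implies iso} forces $\B_*\cong\mathfrak{K}_2$, whence $\B\cong\B_2$. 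This gives $\B_2\in\H\S(\B_1)$, contradicting \eqref{prop:Ai and Bi:item2}.
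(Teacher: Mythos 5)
Your proposal is correct and follows essentially the same route as the paper: items (1) and (2) are obtained from the same dual characterizations, and item (3) runs the identical contradiction argument (subdirect irreducibility, J\'onsson's Lemma, the $\mgrz$ axiom forcing a partial order, and \cref{lem:is poset then sk iso implies iso}). The only cosmetic differences are that you certify subdirect irreducibility of $\Op(\B_2)$ via $\sk(\mathfrak{K}_2)$ rather than via $\B_2$ itself, and you close by contradicting item (2) directly instead of re-extracting the frame morphism and contradicting \cref{prop:morph between G1 and G2:item2}; both variants are sound.
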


\begin{proof}
\eqref{prop:Ai and Bi:item1}. 
This follows
from \cref{prop:1-1 and onto:item1,prop:morph between G1 and G2:item1}.

\eqref{prop:Ai and Bi:item2}.
This follows from \cref{prop:HS dual:item2,prop:morph between G1 and G2:item2}.

\eqref{prop:Ai and Bi:item3}. 
Suppose $\Op(\B_2) \in \Op(\mathbb{V}_1)$. Then there is $\B \in \mathbb{V}_1$
such that $\Op(\B_2)=\Op(\B)$. 
Since $Q_2[a]=Y_2$, it follows from \cref{prop:subdirectly irr dually:item2} that $\B_2$ is subdirectly irreducible, so $\Op(\B_2)=\Op(\B)$ is subdirectly irreducible, and hence $\B$ is subdirectly irreducible by \cref{cor:B sub irr iff O(B) sub irr}. Therefore, since $\B$ belongs to the variety generated by $\B_1$, J\'onsson's Lemma yields that $\B \in \H\S(\B_1)$.
By \cref{prop:HS dual:item2}, there are a $Q_1$-upset $U$ of $\mathfrak{K}_1$ and a $\msfrm$-morphism $f$ from $U$ onto the dual $\G=(Y,R,E)$ of $\B$. 
Because $R_1$ is a partial order, $\B_1$ is a finite $\mgrz$-algebra. Hence, $\B$ is a $\mgrz$-algebra since $\B \in \H\S(\B_1)$. Therefore, $R$ is a partial order.
Because $\Op(\B_2)=\Op(\B)$, the frames $\sk(\mathfrak{K}_2)$ and $\sk(\G)$ are isomorphic in $\mipcfrm$.
Thus, \cref{lem:is poset then sk iso implies iso} yields that $\mathfrak{K}_2$ and $\G$ are isomorphic in $\msfrm$, and so there is a $\msfrm$-morphism $f$ from $U$ onto $\mathfrak{K}_2$. This contradicts \cref{prop:morph between G1 and G2:item2}.
\end{proof}

\cref{prop:SOpV=VOp,thm:int frag SrhoV} show that the operator $\S\Op$ plays the same role for varieties of $\ms$-algebras  
as $\Op$ does for varieties of $\sfour$-algebras. 
The following theorem yields that $\S\Op$ is not well behaved, already when restricted to varieties of $\mgrz$-algebras. 

\begin{theorem}\plabel{thm:SO not well behaved}
\hfill\begin{enumerate}
\item\label[thm:SO not well behaved]{thm:SO not well behaved:item1} $\mathbb{V}_1$ is a variety of $\mgrz$-algebras such that $\Op(\mathbb{V}_1)$ is not a variety of monadic Heyting algebras. 
\item\label[thm:SO not well behaved]{thm:SO not well behaved:item2} $\S$ and $\Op$ do not commute.
\item\label[thm:SO not well behaved]{thm:SO not well behaved:item3} $\S\Op$ does not commute with binary intersections.
\item\label[thm:SO not well behaved]{thm:SO not well behaved:item4} $\S\Op$ is not one-to-one.
\end{enumerate}
\end{theorem}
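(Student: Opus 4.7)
I plan to derive all four statements from Lemma~\ref{prop:Ai and Bi} together with \cref{prop:HSP and Op,prop:SOpV=VOp,thm:int frag SrhoV}, setting $\mathbb{V}_2 \coloneqq \V(\B_2)$ throughout. For (1): since $R_1$ is a partial order, $\B_1$ is an $\mgrz$-algebra, so $\mathbb{V}_1$ is a variety of $\mgrz$-algebras; and \cref{prop:Ai and Bi:item1,prop:Ai and Bi:item3} jointly give $\Op(\B_2) \in \S\Op(\mathbb{V}_1) \setminus \Op(\mathbb{V}_1)$, so $\Op(\mathbb{V}_1)$ is not closed under $\S$ and therefore not a variety. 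For (2), take $\K = \mathbb{V}_1$: then $\S(\mathbb{V}_1) = \mathbb{V}_1$ gives $\Op\S(\mathbb{V}_1) = \Op(\mathbb{V}_1) \subsetneq \S\Op(\mathbb{V}_1)$.

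For (4), I aim to show $\S\Op(\mathbb{V}_1) = \S\Op(\mathbb{V}_1 \vee \mathbb{V}_2)$ while $\mathbb{V}_1 \neq \mathbb{V}_1 \vee \mathbb{V}_2$. By \cref{prop:SOpV=VOp:item1}, $\S\Op(\mathbb{V}_i) = \V(\Op(\B_i))$ for each $i$, and the embedding $\Op(\B_2) \hookrightarrow \Op(\B_1)$ from \cref{prop:Ai and Bi:item1} yields $\S\Op(\mathbb{V}_2) \subseteq \S\Op(\mathbb{V}_1)$. Combined with the join-preservation of $\S\Op$ from \cref{thm:int frag SrhoV:item3}, this gives
\[
\S\Op(\mathbb{V}_1 \vee \mathbb{V}_2) = \S\Op(\mathbb{V}_1) \vee \S\Op(\mathbb{V}_2) = \S\Op(\mathbb{V}_1).
\]
On the other hand, $\B_2 \notin \mathbb{V}_1$: if it were, J\'onsson's Lemma would force $\B_2 \in \H\S(\B_1)$, contradicting \cref{prop:Ai and Bi:item2}. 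Hence $\mathbb{V}_1 \subsetneq \mathbb{V}_1 \vee \mathbb{V}_2$.

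For (3), I again take the same pair $\mathbb{V}_1, \mathbb{V}_2$: one has $\Op(\B_2) \in \S\Op(\mathbb{V}_1) \cap \S\Op(\mathbb{V}_2)$ immediately from \cref{prop:Ai and Bi:item1}, so everything reduces to showing $\Op(\B_2) \notin \S\Op(\mathbb{V}_1 \cap \mathbb{V}_2)$. The plan is to first enumerate the subdirectly irreducible members of $\mathbb{V}_1 \cap \mathbb{V}_2$, which by J\'onsson's Lemma all lie in $\H\S(\B_1) \cap \H\S(\B_2)$. A direct inspection of onto $\msfrm$-morphisms from closed $Q_i$-upsets of $\mathfrak{K}_i$ (using that $\mathfrak{K}_2$ is an $R$-chain and invoking \cref{prop:morph between G1 and G2:item2}) will identify, up to isomorphism, only three such common SIs: the one-point algebra, the dual of $\{v, z\} \subseteq \mathfrak{K}_1$, and the two-point $R$-chain with trivial $E$. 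The $\Op$-image of each has at most three elements. Then \cref{prop:SOpV=VOp:item2} gives $\S\Op(\mathbb{V}_1 \cap \mathbb{V}_2) = \V(\Op(\mathbb{V}_1 \cap \mathbb{V}_2))$, and a second application of J\'onsson's Lemma to this finitely generated variety places every SI in $\H\S$ of one of those three small $\Op$-images, so every SI has at most three elements. Since $\Op(\B_2)$ is SI and has four elements, the contradiction is immediate.

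The hardest step will be the finite enumeration in (3) of $\H\S(\B_1) \cap \H\S(\B_2)$: verifying that only the three listed common SIs arise amounts to checking all compatible partitions of closed $Q_1$-upsets of $\mathfrak{K}_1$ and, dually, observing that a chain frame such as $\mathfrak{K}_2$ cannot surject onto any configuration containing two $R$-incomparable points. Everything else reduces to routine cardinality bounds combined with J\'onsson's Lemma and the fact that $\mha$ and $\msa$ both enjoy the CEP.
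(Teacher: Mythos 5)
Your proposal is correct, and for items (1), (2), and (4) it follows the paper's proof essentially verbatim (in (4) you invoke join-preservation of $\S\Op$ from \cref{thm:int frag SrhoV:item3} where the paper writes out the chain of equalities via \cref{prop:SOpV=VOp:item1} directly; this is only a cosmetic difference). Item (3) is where you genuinely diverge. The paper avoids enumerating $\H\S(\B_1)\cap\H\S(\B_2)$: it takes an arbitrary $\B$ in that intersection with $\Op(\B_2)\in\H\S(\Op(\B))$ and squeezes the cardinality of its dual $\G$ from both sides ($\lvert\G\rvert\le 3$ since $\B\in\H\S(\B_2)$, and $\lvert\sk(\G)\rvert\ge 3$ since $\Op(\B_2)\in\H\S(\Op(\B))$), forcing $\G\cong\mathfrak{K}_2$ and hence $\B_2\in\H\S(\B_1)$, contradicting \cref{prop:Ai and Bi:item2}. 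You instead enumerate the common subdirectly irreducibles explicitly and bound the cardinality of the subdirectly irreducibles of the resulting variety of monadic Heyting algebras by $3$, against the $4$-element $\Op(\B_2)$. Your enumeration is right: $\H\S(\B_2)$ consists (up to isomorphism) of the duals of $\mathfrak{K}_2,\dots,\mathfrak{K}_5$, and since $\mathfrak{K}_2^*\notin\H\S(\B_1)$ by \cref{prop:Ai and Bi:item2}, the common subdirectly irreducibles are among $\mathfrak{K}_3^*,\mathfrak{K}_4^*,\mathfrak{K}_5^*$ (in fact all three occur, though only the upper bound is needed, so the inspection of quotients of $Q_1$-upsets of $\mathfrak{K}_1$ you flag as the hardest step is actually dispensable). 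One small point of care: Jónsson's Lemma should be applied to $\V(\{\Op(\mathfrak{K}_3^*),\Op(\mathfrak{K}_4^*),\Op(\mathfrak{K}_5^*)\})$, a variety generated by finitely many finite algebras, rather than to $\V(\Op(\mathbb{V}_1\cap\mathbb{V}_2))$ with its arbitrary (infinite) generating class; the containment $\S\Op(\mathbb{V}_1\cap\mathbb{V}_2)\subseteq\V(\Op(\{\mathfrak{K}_3^*,\mathfrak{K}_4^*,\mathfrak{K}_5^*\}))$ needed for this follows from \cref{prop:SOpV=VOp:item1}. The paper's squeeze argument is shorter and less case-dependent; your enumeration gives a more concrete picture of $\mathbb{V}_1\cap\mathbb{V}_2$ but requires the extra bookkeeping.
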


\begin{proof}
\eqref{thm:SO not well behaved:item1}.
Let $\mgrza$ be the variety of $\mgrz$-algebras. Since $\B_1 \in \mgrza$, we have that $\mathbb{V}_1$ is a subvariety of $\mgrza$.
Because $\Op(\B_1) \in \Op(\mathbb{V}_1)$, it follows from \cref{prop:Ai and Bi:item1} that $\Op(\B_2) \in \S \Op(\mathbb{V}_1)$. By \cref{prop:Ai and Bi:item3}, $\Op(\B_2) \notin \Op(\mathbb{V}_1)$. Consequently, $\S \Op(\mathbb{V}_1) \neq \Op(\mathbb{V}_1)$, so $\Op(\mathbb{V}_1)$ is not closed under $\S$, and hence is not a variety. 

\eqref{thm:SO not well behaved:item2}.
As we saw in the proof of \eqref{thm:SO not well behaved:item1},  $\S \Op(\mathbb{V}_1) \neq \Op (\mathbb{V}_1) = \Op \S (\mathbb{V}_1)$. Thus, $\S$ and $\Op$ do not commute. 

\eqref{thm:SO not well behaved:item3}. 
We show that $\S\Op(\mathbb{V}_1 \cap \mathbb{V}_2) \neq \S\Op(\mathbb{V}_1) \cap \S\Op(\mathbb{V}_2)$.
By \cref{prop:Ai and Bi:item1}, $\Op(\B_2) \in \S\Op(\B_1)$, so $\Op(\B_2) \in \S\Op(\mathbb{V}_1) \cap \S\Op(\mathbb{V}_2)$. It remains to prove that $\Op(\B_2) \notin \S\Op(\mathbb{V}_1 \cap \mathbb{V}_2)$. By J\'onsson's Lemma, every subdirectly irreducible algebra in $\mathbb{V}_1 \cap \mathbb{V}_2$ is in $\H\S(\B_1)\cap\H\S(\B_2)$. Therefore, $\mathbb{V}_1 \cap \mathbb{V}_2 = \V(\H\S(\B_1)\cap\H\S(\B_2))$. Thus, by \cref{prop:SOpV=VOp:item1}, 
\[
\S\Op(\mathbb{V}_1 \cap \mathbb{V}_2) = \S\Op(\V(\H\S(\B_1)\cap\H\S(\B_2)))=\V(\Op(\H\S(\B_1)\cap\H\S(\B_2))).
\]
Using J\'onsson's Lemma again, if $\Op(\B_2) \in \V(\Op(\H\S(\B_1)\cap\H\S(\B_2)))$, then $\Op(\B_2) \in \H\S(\Op(\H\S(\B_1)\cap\H\S(\B_2)))$. Consequently, there exists
$\B \in \H\S(\B_1)\cap \H\S(\B_2)$ such that $\Op(\B_2) \in \H\S(\Op(\B))$.
Let $\G \in \msfrm$ be the dual of $\B$. Since $\B \in \H\S(\B_2)$, \cref{prop:HS dual:item2} yields that the cardinality $\lvert \G \rvert$ 
is less than or equal to $\lvert \mathfrak{K}_2 \rvert=3$. Because $\Op(\B_2) \in \H\S(\Op(\B))$, from \cref{prop:HS dual:item1} it follows that $3=\lvert\sk(\mathfrak{K}_2)\rvert \le \lvert\sk(\G)\rvert$. Therefore, $\lvert \G \rvert = \lvert \mathfrak{K}_2 \rvert = 3$, so \cref{prop:HS dual:item2} 
implies that $\G \cong \mathfrak{K}_2$, and hence $\B \cong \B_2$. Thus, from $\B \in \H\S(\B_1)$ it follows that $\B_2 \in \H\S(\B_1)$, which contradicts \cref{prop:Ai and Bi:item2}.

\eqref{thm:SO not well behaved:item4}. 
Let $\mathbb{V}$ be the join of $\mathbb{V}_1$ and $\mathbb{V}_2$. We show that $\S\Op(\mathbb{V})=\S\Op(\mathbb{V}_1)$ but $\mathbb{V} \neq \mathbb{V}_1$. We have
\begin{align*}
\S\Op(\mathbb{V}) &= \S\Op(\V(\{\B_1,\B_2\}))=\V(\{\Op(\B_1),\Op(\B_2)\})\\
&=\V(\Op(\B_1))=\S\Op(\V(\B_1))=\S\Op(\mathbb{V}_1),
\end{align*}
where the first and last equalities follow from the definitions of $\mathbb{V}$ and $\mathbb{V}_1$, the second and fourth from \cref{prop:SOpV=VOp:item1}, and the third is a consequence of \cref{prop:Ai and Bi:item1}.
J\'onsson's Lemma together with \cref{prop:Ai and Bi:item2} yields that $\B_2 \notin \V(\B_1)$. Thus, $\mathbb{V}=\V(\{\B_1,\B_2\})\neq\V(\B_1)=\mathbb{V}_1$. 
\end{proof}

\begin{remark}
\begin{figure}[!h]
\begin{tikzpicture}[-{Latex[width=1mm]}]
\coordinate (M3) at (0,0);
\coordinate (T3) at (0,1.5);
\fill (M3) circle(2pt);
\fill (T3) circle(2pt);
\draw (M3) -- (T3);
\clustertwo{M3}{T3}{1.6}{1};
\node at (0.1,-1.2) {$\mathfrak{K}_3$};
\coordinate (M4) at (3,0);
\coordinate (T4) at (3,1.5);
\fill (M4) circle(2pt);
\fill (T4) circle(2pt);
\draw (M4) -- (T4);
\clusterone{M4}{1.2};
\clusterone{T4}{1.2};
\node at (3.1,-1.2) {$\mathfrak{K}_4$};
\coordinate (T5) at (6,1.5);
\fill (T5) circle(2pt);
\clusterone{T5}{1.2};
\node at (6.1,-1.2) {$\mathfrak{K}_5$};
\end{tikzpicture}
\caption{The frames $\mathfrak{K}_3$, $\mathfrak{K}_4$, and $\mathfrak{K}_5$.}
\label{fig3}
\end{figure}

As we saw in \cref{thm:SO not well behaved:item1},
 $\Op(\mathbb{V}_1)$ is not a variety. On the other hand, $\Op(\mathbb{V}_2)$ is a variety. This can be seen as follows.
For $\G \in \msfrm$, it is straightforward to see that there is an onto $\msfrm$-morphism from a closed $Q_2$-upset of $\mathfrak{K}_2$ to $\G$ iff $\G \cong \mathfrak{K}_i$ for $i=2,\dots, 5$, where $\mathfrak{K}_3, \mathfrak{K}_4, \mathfrak{K}_5$ are shown in Fig.~\ref{fig3}.
 Thus, by 
\cref{prop:HS dual:item2},
 $\G^* \in \H\S(\B_2)$ iff $\G \cong \mathfrak{K}_i$ for $i=2,\dots, 5$.

Similarly, for $\F \in \mipcfrm$, it is straightforward to see that there is an onto $\mipcfrm$-morphism from a closed $Q_2'$-upset of $\sk(\mathfrak{K}_2)$ to $\F$ iff $\F \cong \sk(\mathfrak{K}_i)$ for $i=2,\dots, 5$. Thus, by 
\cref{prop:HS dual:item2}, 
$\F^* \in \H\S\Op(\B_2)$ iff $\F \cong \sk(\mathfrak{K}_i)$ for $i=2,\dots, 5$.

J\'onsson's Lemma yields that the subdirectly irreducible $\ms$-algebras in $\mathbb{V}_2$ are exactly the ones isomorphic to $\mathfrak{K}_i^*$ for $i=2,\dots, 5$. Since $\mathbb{V}_2 = \V(\mathfrak{B}_2)$, by 
\cref{prop:SOpV=VOp} we have
\[
\V(\Op(\mathbb{V}_2)) = \S\Op(\mathbb{V}_2) = \S\Op(\V(\B_2)) = \V(\Op(\B_2)).
\]
So, J\'onsson's Lemma implies that the subdirectly irreducible monadic Heyting algebras in $\V(\Op(\mathbb{V}_2))$ are exactly the ones isomorphic to $\sk(\mathfrak{K}_i)^*$ for $i=2,\dots, 5$.

Let $\A \in \V(\Op(\mathbb{V}_2))$ and $\A_*=(X,R,Q)$.
For each $x \in X$ we have that $Q[x] \cong \sk(\mathfrak{K}_i)$ for $i=2,\dots, 5$.
Therefore, for each $x \in X$, the $R$-upset $R[x]$ is a chain of at most $3$ elements, and $E[x]$ is either a singleton or a $2$-element chain whose top element is maximal in $X$. From this we can derive that $E_Q[U]$ is clopen for each clopen $U$, and hence that $E_Q$ is a continuous relation on $X$. 
Thus, $(X,R,E_Q)$ is a descriptive $\ms$-frame. 

Let $\B$ be the $\ms$-algebra dual to $(X,R,E_Q)$. Then $\A \cong \Op(\B)$ by \cref{thm:Op and sk}. For every $x \in X$ we have that $Q[x]$ is isomorphic to $\sk(\mathfrak{K}_i)$ for some $i=2,\dots, 5$. Since $\sk(\mathfrak{K}_i) \cong \mathfrak{K}_i$ for every $i =2, \dots, 5$, it follows that each $Q[x]$ is isomorphic to $\mathfrak{K}_i$ for some $i=2,\dots, 5$. This means that $\B$ is a subdirect product of a family of $\ms$-algebras each isomorphic to $\mathfrak{K}_i^*$ for some $i=2,\dots, 5$. We observed above that $\mathfrak{K}_i^* \in \H\S(\B_2)$ for every $i=2,\dots, 5$. Thus, $\B \in \V(\B_2)=\mathbb{V}_2$. Consequently, $\A \in \Op(\mathbb{V}_2)$. This proves that $\V(\Op(\mathbb{V}_2)) \subseteq \Op(\mathbb{V}_2)$, and so $\Op(\mathbb{V}_2)$ is a variety.
\end{remark}

By putting \cref{thm: lattice iso,thm: lattice iso ms,thm:int frag SrhoV,thm:SO not well behaved} together, we obtain:

\begin{theorem}\plabel{cor: no BE}
\hfill\begin{enumerate}
\item\label[cor: no BE]{cor: no BE:item1} $\rho \colon \Lambda(\mgrz) \to \Lambda(\mipc)$ is not a lattice homomorphism. 
\item\label[cor: no BE]{cor: no BE:item2} $\rho \colon \Lambda(\mgrz) \to \Lambda(\mipc)$ is not one-to-one.
\end{enumerate}
\end{theorem}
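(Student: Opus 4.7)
The plan is to translate both claims into statements about subvarieties using the order-reversing lattice isomorphisms of \cref{thm: lattice iso,thm: lattice iso ms}, together with the identity $\Alg(\rho\M) = \S\Op(\Alg(\M))$ from \cref{thm:int frag SrhoV:item1}. Under these dualities, joins in $\Lambda(\mgrz)$ and $\Lambda(\mipc)$ correspond to intersections of varieties and meets of logics to joins of varieties; therefore $\rho$ fails to preserve binary joins (respectively, to be one-to-one) precisely when $\S\Op$ fails to commute with binary intersections (respectively, to be injective) on subvarieties of $\mgrza$.

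For \eqref{cor: no BE:item1}, let $\mathbb{V}_1, \mathbb{V}_2$ be the subvarieties of $\mgrza$ introduced just before \cref{thm:SO not well behaved}, and set $\M_i \coloneqq \Log(\mathbb{V}_i)$ for $i=1,2$. Because each $\B_i$ is a $\mgrz$-algebra, both $\M_i$ lie in $\Lambda(\mgrz)$. From \cref{thm:int frag SrhoV:item1} and the duality we obtain
\[
\Alg\bigl(\rho(\M_1 \vee \M_2)\bigr) = \S\Op(\mathbb{V}_1 \cap \mathbb{V}_2), \qquad \Alg(\rho\M_1 \vee \rho\M_2) = \S\Op(\mathbb{V}_1) \cap \S\Op(\mathbb{V}_2),
\]
and \cref{thm:SO not well behaved:item3} asserts that these two subvarieties of $\mha$ are distinct. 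Hence $\rho(\M_1 \vee \M_2) \neq \rho\M_1 \vee \rho\M_2$. Since $\rho$ already preserves arbitrary meets by \cref{prop:rho tau sigma adjoints:item3}, this failure of join-preservation is enough to conclude that $\rho$ is not a lattice homomorphism.

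For \eqref{cor: no BE:item2}, put $\mathbb{V} \coloneqq \mathbb{V}_1 \vee \mathbb{V}_2$ and $\M \coloneqq \Log(\mathbb{V})$. The proof of \cref{thm:SO not well behaved:item4} establishes $\S\Op(\mathbb{V}) = \S\Op(\mathbb{V}_1)$ while $\mathbb{V} \neq \mathbb{V}_1$. Translating through the duality, $\M = \M_1 \cap \M_2$ and $\M_1$ are distinct extensions of $\mgrz$, yet $\rho\M = \rho\M_1$, exhibiting two distinct elements of $\Lambda(\mgrz)$ with the same image under $\rho$.

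Since all the technical content is already concentrated in \cref{thm:SO not well behaved} and \cref{thm:int frag SrhoV}, this argument is essentially a translation through the dualities; I anticipate no real obstacle beyond the bookkeeping that the $\M_i$ live in $\Lambda(\mgrz)$, which is immediate because each $\mathbb{V}_i$ is a subvariety of $\mgrza$.
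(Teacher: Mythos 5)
Your argument is correct and is essentially the paper's own proof, just written out in more detail: both reduce the claims to the failure of $\S\Op$ on subvarieties of $\mgrza$ via the dual isomorphisms of \cref{thm: lattice iso,thm: lattice iso ms} and the identity $\Alg(\rho\M)=\S\Op(\Alg(\M))$ from \cref{thm:int frag SrhoV:item1}, then invoke \cref{thm:SO not well behaved:item3,thm:SO not well behaved:item4}. The only difference is that you make the bookkeeping (which logics witness the failure, and why they lie in $\Lambda(\mgrz)$) explicit where the paper leaves it implicit.
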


\begin{proof}
\eqref{cor: no BE:item1}.
Let $\Lambda(\mgrza)$ be the complete lattice of subvarieties of $\mgrza$. It follows from \cref{thm: lattice iso ms:item2} that $\Lambda(\mgrz)$ is dually isomorphic to $\Lambda(\mgrza)$. By \cref{thm:SO not well behaved:item3}, $\S\Op\colon \Lambda(\mgrza) \to \Lambda(\mha)$ is not a lattice homomorphism. Therefore, neither is $\rho$ by Theorems~\ref{thm: lattice iso} and \ref{thm:int frag SrhoV}\eqref{thm:int frag SrhoV:item1}. 

\eqref{cor: no BE:item2}. This is proved similarly, but uses \cref{thm:SO not well behaved:item4}.
\end{proof}

The previous theorem immediately yields:

\begin{corollary}
$\rho \colon \Lambda(\ms) \to \Lambda(\mipc)$ is neither a lattice homomorphism nor one-to-one. 
\end{corollary}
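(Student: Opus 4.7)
The plan is to reduce the claim to \cref{cor: no BE} by observing that $\Lambda(\mgrz)$ embeds as a sublattice of $\Lambda(\ms)$ and that the restriction of $\rho\colon\Lambda(\ms)\to\Lambda(\mipc)$ to this sublattice agrees with the map already treated in \cref{cor: no BE}.

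First I would check that $\Lambda(\mgrz)$ is a sublattice of $\Lambda(\ms)$. Every $\M\in\Lambda(\mgrz)$ contains $\mgrz\supseteq\ms$ and is closed under the rules of $\ms$, so $\M\in\Lambda(\ms)$; this gives the inclusion $\Lambda(\mgrz)\subseteq\Lambda(\ms)$ of underlying sets. Binary meets in both lattices are intersections, hence they coincide. For joins, the join in $\Lambda(\ms)$ of $\M_1,\M_2\in\Lambda(\mgrz)$ is the least normal extension of $\ms$ containing $\M_1\cup\M_2$; since $\M_1\cup\M_2$ already contains $\mgrz$, this coincides with the least normal extension of $\mgrz$ containing $\M_1\cup\M_2$, which is their join in $\Lambda(\mgrz)$.

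Next, the definition $\rho\M=\{\varphi : \M\vdash\varphi^t\}$ depends only on $\M$ as a set of formulas, not on whether it is viewed in $\Lambda(\ms)$ or $\Lambda(\mgrz)$. Hence the $\rho$ of the corollary, restricted to $\Lambda(\mgrz)\subseteq\Lambda(\ms)$, is exactly the map of \cref{cor: no BE}.

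Finally I would transfer the two failures. For the lattice homomorphism part, \cref{cor: no BE:item1} provides $\M_1,\M_2\in\Lambda(\mgrz)$ for which either $\rho(\M_1\cap\M_2)\neq\rho\M_1\cap\rho\M_2$ or $\rho(\M_1\vee\M_2)\neq\rho\M_1\vee\rho\M_2$ in $\Lambda(\mipc)$; by the preceding paragraph, these $\M_1,\M_2$ witness the same failure inside $\Lambda(\ms)$, so $\rho\colon\Lambda(\ms)\to\Lambda(\mipc)$ is not a lattice homomorphism. For non-injectivity, \cref{cor: no BE:item2} provides distinct $\M_1,\M_2\in\Lambda(\mgrz)$ with $\rho\M_1=\rho\M_2$; these remain distinct elements of $\Lambda(\ms)$ with the same image, so $\rho$ is not one-to-one. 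There is no real obstacle here beyond verifying that the sublattice structure is preserved; this is the only step requiring any care.
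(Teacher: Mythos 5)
Your proposal is correct and follows exactly the route the paper intends: the paper derives the corollary "immediately" from \cref{cor: no BE} by noting that $\rho\colon\Lambda(\ms)\to\Lambda(\mipc)$ restricts to the map on $\Lambda(\mgrz)$ already shown to fail both properties. Your extra care in checking that meets and joins of extensions of $\mgrz$ computed in $\Lambda(\ms)$ agree with those computed in $\Lambda(\mgrz)$ is exactly the (routine) verification the paper leaves implicit.
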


We are ready to prove that the Blok--Esakia Theorem fails in the monadic setting.

\begin{theorem}\label{thm:sigma not onto}
$\sigma \colon \Lambda(\mipc) \to \Lambda(\mgrz)$ is not onto, hence is not an isomorphism.
\end{theorem}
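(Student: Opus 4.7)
The plan is to exploit the adjunction $\sigma \dashv \rho|_{\Lambda(\mgrz)}$ established in \cref{prop:rho tau sigma adjoints:item2} together with the failure of injectivity of $\rho$ proved in \cref{cor: no BE:item2}. The key abstract fact is the standard observation that for an adjunction $f \dashv g$ between posets, $f$ is surjective if and only if $g$ is injective; since $\rho|_{\Lambda(\mgrz)}$ is not one-to-one, $\sigma$ cannot be onto.

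Concretely, I would argue by contrapositive. Suppose $\sigma$ is surjective, and pick any $\M \in \Lambda(\mgrz)$; choose $\L \in \Lambda(\mipc)$ with $\sigma\L = \M$. The unit of the adjunction gives $\L \subseteq \rho\sigma\L = \rho\M$, and applying the monotone map $\sigma$ yields $\M = \sigma\L \subseteq \sigma\rho\M$. Combined with the counit inequality $\sigma\rho\M \subseteq \M$, this forces $\sigma\rho\M = \M$ for every $\M \in \Lambda(\mgrz)$. Consequently, if $\rho\M_1 = \rho\M_2$ for $\M_1, \M_2 \in \Lambda(\mgrz)$, then $\M_1 = \sigma\rho\M_1 = \sigma\rho\M_2 = \M_2$, which would make $\rho|_{\Lambda(\mgrz)}$ one-to-one and contradict \cref{cor: no BE:item2}. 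Hence $\sigma$ is not onto, and in particular is not an isomorphism.

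There is essentially no new obstacle at this stage: the genuine work—constructing the descriptive frames $\mathfrak{K}_1, \mathfrak{K}_2$, analyzing the behavior of $\Op$ on them, and proving that $\S\Op$ fails to be one-to-one on the lattice of subvarieties of $\mgrza$—has already been completed in \cref{thm:SO not well behaved,cor: no BE}. The present theorem is the clean lattice-theoretic packaging of that failure, formally ruling out the extension of the Blok--Esakia isomorphism to the monadic setting.
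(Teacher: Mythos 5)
Your proposal is correct and follows essentially the same route as the paper: both deduce from the adjunction in \cref{prop:rho tau sigma adjoints:item2} that surjectivity of $\sigma$ would force $\sigma\rho$ to be the identity on $\Lambda(\mgrz)$, making $\rho$ one-to-one and contradicting \cref{cor: no BE:item2}. The only difference is that you spell out the unit/counit inequalities where the paper invokes the identity $\sigma\rho\sigma=\sigma$ directly.
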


\begin{proof}
By \cref{prop:rho tau sigma adjoints:item2}, $\sigma$ is left adjoint to $\rho \colon \Lambda(\mgrz) \to \Lambda(\mipc)$. Thus, $\sigma \rho \sigma = \sigma$. If $\sigma$ were onto, then $\sigma \rho$ would be the identity on $\Lambda(\mgrz)$, and hence $\rho$ would be one-to-one. This contradicts \cref{cor: no BE:item2}.
\end{proof}

In this paper we showed that the Blok--Esakia isomorphism does not extend to the monadic setting. We conclude by outlining several interesting directions for future research. 
\begin{itemize}
\item It remains open whether the two lattices $\Lambda(\mipc)$ and $\Lambda(\mgrz)$ are isomorphic. Our expectation is that the answer is negative.
\item It is also open whether each monadic Heyting algebra can be realized as the algebra of open elements of some monadic $\mathsf{S4}$-algebra (see the end of \cref{sec:Op}).
\item A related open problem is whether $\rho \colon  \Lambda(\ms) \to \Lambda(\mipc)$ is surjective. In other words, it remains open whether every extension of $\mipc$ has a modal companion (see \cref{rem:open problems:item2}).
\item In addition, it is open whether $\tau$ and $\sigma$ are complete lattice homomorphisms (see \cref{rem:open problems:item1}).
\end{itemize}

\section*{Acknowledgements}

We would like to thank the referees for careful reading and useful comments which have improved the presentation.

\end{document}